\numberwithin{equation}{section}
\theoremstyle{plain}
\newtheorem{theorem}{Theorem}[section]
\newtheorem{proposition}[theorem]{Proposition}
\newtheorem{lemma}[theorem]{Lemma}
\newtheorem{corollary}[theorem]{Corollary}
\theoremstyle{definition}
\newtheorem{definition}[theorem]{Definition}
\newtheorem{example}[theorem]{Example}
\theoremstyle{remark}
\newtheorem{remark}[theorem]{Remark}
\newcommand{\interior}[1]{%
	{\kern0pt#1}^{\mathrm{o}}%
}
\begin{document}
	
	\title{Mass, center of mass and isoperimetry in asymptotically flat $3$-manifolds}
	\author{Sergio Almaraz}
	\address{Universidade Federal Fluminense (UFF), Instituto de Matem\'atica, Campus do Gragoat\'a, Rua Prof. Marcos Waldemar de Freitas, s/n, 24210-201, Niter\'oi, RJ, Brazil.
	}
	\email{sergio.m.almaraz@gmail.com}
	\author{Levi Lopes de Lima}
	\address{Universidade Federal do Cear\'a (UFC),
		Departamento de Matem\'{a}tica, Campus do Pici, Av. Humberto Monte, s/n, Bloco 914, 60455-760,
		Fortaleza, CE, Brazil.}
	\email{levi@mat.ufc.br}
	\thanks{S. Almaraz has been partially supported by  CNPq/Brazil grant 
		309007/2016-0 and FAPERJ/Brazil grant 202.802/2019, and L. de Lima has been partially supported by CNPq/Brazil grant
		312485/2018-2. Both authors have been partially supported by FUNCAP/CNPq/PRONEX grant 00068.01.00/15.}
	
	\begin{abstract}
		We revisit the interplay between the mass, the center of mass and the large scale behavior  of certain isoperimetric quotients in the setting of asymptotically flat $3$-manif\-olds (both without and with a non-compact boundary). In the boundaryless case, we first check that 
		the isoperimetric deficits involving the total mean curvature recover the ADM  mass in the asymptotic limit, thus extending a classical result due to G. Huisken. Next, under a  Schwarzschild asymptotics and assuming that the mass is positive we indicate  how the implicit function method pioneered by R. Ye  and refined by L.-H. Huang   may be adapted to establish the existence of a foliation of a neighborhood of infinity satisfying the corresponding curvature conditions. Recovering the mass as the asymptotic limit of the corresponding {relative} isoperimetric deficit also holds true in the presence of a non-compact boundary, where  we additionally obtain, again  under a Schwarzschild asymptotics, a foliation at infinity by free boundary constant mean curvature hemispheres, which are shown to be the {unique} {relative} isoperimetric surfaces for all sufficiently large enclosed volume, thus extending to this setting a celebrated result by M. Eichmair and J. Metzger. 
		Also, in each case treated here we relate the geometric center of the foliation to the center of mass of the manifold as defined by Hamiltonian methods.
	\end{abstract}

	\maketitle
	\tableofcontents

	\section{Introduction}\label{introd}

	Among the large scale invariants that can be attached by means of Hamiltonian methods to an asymptotically flat Riemannian $ 3$-manifold, viewed as the (time-symmetric) initial data set of a solution of Einstein field equations, the ADM mass and the center of mass stand out as the most relevant ones. Besides their undisputed physical prominence, the study of these  invariants also reveals  deep connections with several areas of Geometric Analysis, including the Yamabe problem \cite{schoen1984conformal,leeparker1987,brendle2015recent}, the inverse mean curvature flow \cite{huisken2001}, the construction of canonical foliations at infinity \cite{huisken1996definition,ye1997foliation,metzger2007foliations,huang2008center,huang2010foliations,nerz2018foliations} and the existence of isoperimetric surfaces for sufficiently large enclosed volumes \cite{eichmairlarge}. We recall that in order to have the center of mass well defined, we must supplement the standard ADM decay assumptions with the so-called Regge-Teitelboim conditions \cite{regge1974role,beig1987poincare}. Even though some of the results discussed below do not require the fulfillment of these extra conditions, throughout this Introduction we assume that this is always the case in order to simplify the presentation.

	Motivated by questions related to the Yamabe problem on manifold with boundary \cite{almaraz2015convergence}, a version of the positive mass theorem has been established for asymptotically flat manifolds carrying a non-compact boundary \cite{almaraz2014positive,almaraz2019spacetime}. Moreover, a notion of center of mass for this class of manifolds has been introduced in \cite{de2019mass}, again under a suitable Regge-Teitelboim-type condition. The main purpose of this paper is to confirm that, similarly to what happens in the boundaryless case, these asymptotic invariants also play a central role in the investigation of large scale isoperimetric properties in the presence of a non-compact boundary.

	With this goal in mind, we start by slightly broadening our perspective and checking  that already in the boundaryless case the ADM mass is recovered  as the asymptotic limit of certain isoperimetric deficits involving the enclosed volume, the area and the total mean curvature in various combinations (Theorem \ref{isovk}). This provides interesting extensions of a well-known result due to Huisken \cite{huisken2006isoperimetric}, where the  area/volume case is dealt with. 
	Moreover, it clearly suggests the existence of stable foliations in a neighborhood of infinity satisfying the corresponding curvature conditions, which besides the mean curvature should additionally involve (a modified version of) the Gauss-Kronecker curvature (the product of the principal curvatures). Under a Schwarzschild-type asymptotics, and assuming as usual that the ADM mass is positive, we prove the existence of the foliations by adapting the well-known implicit function method pioneered by Ye \cite{ye1997foliation} and refined by Huang \cite{huang2008center} (Theorem \ref{fol:curv:cond}). Their approach is made feasible here by means of a simple calculation expressing the Gauss-Kronecker curvature of large coordinate spheres in terms of the corresponding mean curvature  up to a remainder decaying fast enough (Proposition \ref{k:func:h}). This is a quite direct consequence of the existence of an  {\em almost} conformal vector field at infinity and allows us to rely on the computations for the mean curvature in \cite{huang2008center,huang2009center}. As a by-product of this  procedure we are able to check in each case that the geometric center of the foliation  and the center of mass of the manifold always remain at a finite distance from each other and even coincide for certain values of the parameters defining the asymptotics.
	Besides their intrinsic interest, the existence of the foliations suggest that their leaves may be realized as isoperimetric surfaces for the corresponding isoperimetric problems, which  involve minimizing the total mean curvature for large prescribed values of the volume or area, at least if some convexity assumption on the competing surfaces is imposed.  Notice that this is in alignment with the  well-known fact that round spheres in $\mathbb R^3$ constitute global minimizers to these problems among convex surfaces \cite{schneider2014convex,guan2009quermassintegral,chang2011aleksandrov}; see also Remark  \ref{iso:bm:theo} for more on this point.      
	
	Our next goal is to confirm that some of the classical results referred to above may be suitably extended to the class of asymptotically flat manifolds introduced in \cite{almaraz2014positive}. We first prove that in the presence of a non-compact boundary the {\rm relative} isoperimetric deficit involving the area of coordinate hemispheres and the volume they enclose jointly with the boundary also recovers the mass in the asymptotic limit (Theorem \ref{remwithbd}). As before, this preliminary remark is accompanied by a result ensuring, again under a Schwarzschild asymptotics and assuming that the mass is positive, that a neighborhood of infinity is foliated by stable free boundary hemispheres  of constant mean curvature (CMC) and moreover that the geometric center of mass of this foliation coincides with the center of mass as defined by Hamiltonian methods (Theorem \ref{free:af:bd}). 
	We may view these results as the large scale analogues of the main theorems in \cite{fall2009area,montenegro2019foliation}, even though the technical details are quite distinct in nature.
	A key step here is the establishment  of a certain identity relating the center of mass to the integral over large coordinate hemispheres of the higher order terms in the expansion of the corresponding mean curvature against the asymptotic coordinates along the non-compact boundary (Proposition \ref{mc:huang:mean:b}). In the boundaryless case, this kind of identity was first proved in 
	\cite{huang2008center,huang2009center} by means of a quite delicate density argument. Subsequently, an elementary proof appeared in 
	\cite{eichmair2013unique} and we succeed in checking that this latter reasoning adapts well in the presence of a non-compact boundary. As in these works, the identity 
	is used here to remove the obstruction to the invertibility of the relevant linearized operator coming from the invariance of the mean curvature under translational isometries preserving the asymptotic boundary, besides playing a crucial role in checking that the geometric and Hamiltonian centers of mass coincide. Moreover, it provides an alternate expression for the center of mass as an asymptotic integral involving the mean curvature of large coordinate hemispheres (Corollary \ref{cm:alt}). 
	Finally, we complete our analysis of the large scale geometry of this class of manifolds by checking that these free boundary CMC hemispheres constitute the {\em unique} relative isoperimetric surfaces for sufficiently large values of the enclosed volume (Theorem \ref{iso:ext:em1}). This extends to our setting a previous result by Eichmair-Metzger in the boundaryless case \cite{eichmairlarge}; see also \cite{munoz2020isoperimetric}.

	In order to keep  to a minimum the technical features of the exposition, we work here under a Schwarzschild asymptotics whenever needed. In particular, we are usually quite generous when imposing orders of decay rates for the asymptotics of geometric quantities. Also, we have chosen to avoid the consideration of inner minimal horizons (black holes). Moreover, we restrict ourselves to the time-symmetric case, thus bypassing the complications coming from the  extrinsic geometry of the given initial data set. As a consequence, we are unable to consider here the rather appealing question of determining how the asymptotic quantities vary as the initial data set evolves in time under the field equations. Nevertheless, we believe that  the results established here under these mildly restrictive assumptions may be suitably extended to larger classes of asymptotically flat manifolds, in the line of \cite{huang2010foliations,metzger2007foliations,nerz2015foliations,nerz2018foliations,eichmairlarge,chodosh2016isoperimetry,jauregui2019lower,cederbaum2018center} for instance. 
	Another topic we refrain from discussing here in detail is the uniqueness of the leaves as solutions of the corresponding variational problems. However, we remark that in the specific case of the free boundary CMC hemispheres in Theorem \ref{free:af:bd}, the appropriate uniqueness is easily obtained by adapting an argument in \cite[Section 4]{huisken1996definition}; see Appendix \ref{uniq:stab}. We note that this uniqueness is crucial when identifying those hemispheres to the relative isoperimetric hemispheres in Theorem \ref{iso:ext:em1} and it eventually guarantees that they are unique in the class of relative isoperimetric surfaces enclosing large volumes.
	Finally, it would be interesting to investigate how our constructions fit into the quasi-local approach to conserved quantities summarized in \cite{Chen:2013lza}. We hope to address some of these questions elsewhere.

	This paper is organized as follows. In section \ref{sec:statem}, after a brief motivation intended to illustrate the local interplay between isoperimetric quotients and the scalar curvature, we provide precise statements of all the results mentioned above. The arguments leading to the existence of the stable foliations are presented in Sections \ref{const:fol} and \ref{stab:fol}. The proof of Theorem \ref{iso:ext:em1}, which provides the relative isoperimetric regions in the presence of a boundary, is explained in Section \ref{large:rel:iso}. In order not to interrupt the exposition in the bulk of the paper, we  defer to the appendices the proofs of a few technical results, including a discussion of the isoperimetric variational theories involved, specially in regard to the corresponding stability criteria. These appendices  are also used to introduce much of the notation used in the paper. Finally, we also intersperse along the text a few interesting problems in this area of research. 
	
	\vspace{0.3cm}
	\noindent
	{\bf Acknowledgments.} We thank A. Freitas, E. Lima, J.F. Montenegro and S. Nardulli for conversations at an early stage of this project.

	\section{Preliminaries and statements of the results}\label{sec:statem}
	
	For the sake of motivation, we start by considering an arbitrary Riemmanian $3$-manifold $(M,g)$. We fix $q\in M$ and introduce normal coordinates $z=(z_1,z_2,z_3)$ around $q$ in the usual way. If 
	$\mathsf r=|z|$ 
	is the geodesic distance to $q$, let $B_{\mathsf r}^M(q)$ be the geodesic ball of radius ${\mathsf r}$ centered at $q$ and $S_{\mathsf r}^M(q)=\partial B_{\mathsf r}^M(q)$ the corresponding geodesic sphere. If $N$ is the outward unit normal to $S_{\mathsf r}^M(q)$,
	let $W_{\mathsf r}=\nabla N$ be the shape operator of $S_{\mathsf r}(q)$, where $\nabla$ is the Levi-Civita connection. For each $k=1, 2$ we may consider the curvature integral
	\begin{equation}\label{}
		Q^{M;k}_{\mathsf r}(q)=\int_{S^M_{{\mathsf r}}(q)}\sigma_{2-k}(W_{\mathsf r}) dS^M_{\mathsf r}(q),
	\end{equation}
	where $\sigma_i(W_{\mathsf r})$ is the elementary symmetric function of degree $i
	$ in the eingenvalues of $W_{\mathsf r}$ (the principal curvatures $\kappa_1,\kappa_2$). Thus, $\sigma_0=1$, $\sigma_1=\kappa_1+\kappa_2$, the mean curvature, also denoted here by $H$, and $\sigma_2=\kappa_1\kappa_2$, the Gauss-Kronecker curvature, also denoted here by $K$. 
	Finally, we set $Q_{\mathsf r}^{M;3}(q)={\rm vol}_g(B_{\mathsf r}^M(q))$ by convention.
	
	For $1\leq k<h\leq 3$ we consider the isoperimetric quotient
	\begin{equation}\label{isoper:k,h}
		I^{M;h,k}_{\mathsf r}(q)=\frac{Q^{M;k}_{\mathsf r}(q)^{\frac{h}{k}}}{Q^{M;h}_{\mathsf r}(q)}.
	\end{equation}
	Notice that for $(M,g)=(\mathbb R^3,\delta)$, the Euclidean space with the standard flat metric, these quotients do not depend on the pair $(q,{\mathsf r})$ so we denote them simply by $I^{h,k}$. 
	
	\begin{proposition}\label{asymp:isop:local}
		As ${\mathsf r}\to 0$, there exists $c_{k,h}>0$ such that
		\begin{equation}\label{iso:arb:man}
			1-\frac{I^{M;h,k}_{\mathsf r}(q)}{I^{h,k}}=c_{k,h}\mathsf R_g(q){\mathsf r}^2+O({\mathsf r}^4),
		\end{equation}
		where $\mathsf R_g$ is the scalar curvature of $g$. 
		In particular, if $\mathsf R_g(q)\geq 0$ then 
		\begin{equation}\label{subiso:q}
			I^{M;h,k}_{\mathsf r}(q)\leq I^{h,k},
		\end{equation}
		for all ${\mathsf r}>0$ small enough. 
	\end{proposition}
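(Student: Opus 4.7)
The plan is to reduce the whole statement to a single asymptotic input, namely the standard expansion of the Riemannian volume density in normal coordinates centered at $q$,
\[
\sqrt{\det g}(z) = 1 - \tfrac{1}{6}\mathrm{Ric}_{ij}(q)\, z^i z^j + O(|z|^3),
\]
combined with two elementary identities that link the three quantities of interest:
\[
Q^{M;1}_{\mathsf r}(q) = \frac{d}{d{\mathsf r}}\operatorname{Area}(S^M_{\mathsf r}(q)),\qquad Q^{M;3}_{\mathsf r}(q) = \int_0^{\mathsf r}\operatorname{Area}(S^M_s(q))\,ds.
\]
The first is the first-variation-of-area formula applied to the outward radial flow, whose normal speed is $1$ by the Gauss lemma, and the second is coarea. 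Thus the whole expansion is driven by the asymptotics of the area of $S^M_{\mathsf r}(q)$.

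First I would use the Gauss lemma to write $\operatorname{Area}(S^M_{\mathsf r}(q)) = {\mathsf r}^{2}\int_{S^2}\sqrt{\det g({\mathsf r}\theta)}\,d\sigma(\theta)$, substitute the density expansion, and apply $\int_{S^2}\theta^i\theta^j\,d\sigma = \tfrac{4\pi}{3}\delta^{ij}$ to obtain the Vermeil-type formula
\[
\operatorname{Area}(S^M_{\mathsf r}(q)) = 4\pi{\mathsf r}^2\Big(1 - \tfrac{\mathsf R_g(q)}{18}{\mathsf r}^2 + O({\mathsf r}^4)\Big).
\]
The remainder can be pushed from $O({\mathsf r}^3)$ to $O({\mathsf r}^4)$ because the cubic contribution to $\sqrt{\det g}$ is a third-degree polynomial in the $z^i$ (of the form $-\tfrac{1}{12}\nabla_m\mathrm{Ric}_{ij}(q)z^iz^jz^m$), whose integral over the round $S^2$ vanishes by parity; this is the only mildly delicate point. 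Differentiating and integrating the area formula in ${\mathsf r}$ then immediately give
\[
Q^{M;1}_{\mathsf r}(q) = 8\pi{\mathsf r}\Big(1 - \tfrac{\mathsf R_g(q)}{9}{\mathsf r}^2 + O({\mathsf r}^4)\Big),\qquad Q^{M;3}_{\mathsf r}(q) = \tfrac{4\pi}{3}{\mathsf r}^3\Big(1 - \tfrac{\mathsf R_g(q)}{30}{\mathsf r}^2 + O({\mathsf r}^4)\Big).
\]

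Writing $Q^{M;k}_{\mathsf r}(q) = Q^{E;k}_{\mathsf r}\bigl(1+\alpha_k\mathsf R_g(q){\mathsf r}^2 + O({\mathsf r}^4)\bigr)$ with $(\alpha_1,\alpha_2,\alpha_3) = \bigl(-\tfrac{1}{9},-\tfrac{1}{18},-\tfrac{1}{30}\bigr)$, where $Q^{E;k}_{\mathsf r}$ denotes the flat Euclidean counterpart, a direct Taylor expansion of the ratio yields
\[
\frac{I^{M;h,k}_{\mathsf r}(q)}{I^{h,k}} = \frac{\bigl(1+\alpha_k\mathsf R_g(q){\mathsf r}^2\bigr)^{h/k}}{1+\alpha_h\mathsf R_g(q){\mathsf r}^2} + O({\mathsf r}^4) = 1 - c_{k,h}\,\mathsf R_g(q){\mathsf r}^2 + O({\mathsf r}^4),
\]
with $c_{k,h} := \alpha_h - \tfrac{h}{k}\alpha_k$, which is precisely \eqref{iso:arb:man}. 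A direct computation gives $c_{1,2} = \tfrac{1}{6}$, $c_{1,3} = \tfrac{3}{10}$, $c_{2,3} = \tfrac{1}{20}$, all strictly positive, and \eqref{subiso:q} follows at once whenever $\mathsf R_g(q) \geq 0$ and ${\mathsf r}$ is sufficiently small. Beyond the parity observation isolated above, no serious obstacle is anticipated.
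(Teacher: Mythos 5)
Your proposal is correct and follows essentially the same route as the paper: both arguments rest on a single curvature-expansion input together with the variational relations $Q^{M;1}_{\mathsf r}=\frac{d}{d\mathsf r}Q^{M;2}_{\mathsf r}$ and $Q^{M;2}_{\mathsf r}=\frac{d}{d\mathsf r}Q^{M;3}_{\mathsf r}$, followed by a Taylor expansion of the quotient; the coefficients $-\tfrac19,-\tfrac1{18},-\tfrac1{30}$ and the positivity of $c_{k,h}$ match the paper. The only (cosmetic) difference is that you derive the area expansion directly from the normal-coordinate volume density and then differentiate/integrate, whereas the paper starts from Gray's volume expansion and differentiates twice.
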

	
	\begin{proof}
		This folklore result may be checked as follows. The expansion for  the volume, namely, 
		\[
		\frac{Q_{\mathsf r}^{M;3}(q)}{Q_{\mathsf r}^{\mathbb R^3;3}(\vec 0)}=1-\frac{\mathsf R_g(q)}{30}{\mathsf r}^2+O({\mathsf r}^4),
		\] 
		may be found in \cite[Section 9.2]{gray2012tubes}.
		By means of the well-known variational formulae 
		\[
		Q_{\mathsf r}^{M;2}(q)=\frac{d}{d{\mathsf r}}Q_\rho^{M;3}(q),\quad Q_{\mathsf r}^{M;1}(q)=\frac{d}{d{\mathsf r}}Q_\rho^{M;2}(q),
		\]
		we see that 
		\[
		\frac{Q_{\mathsf r}^{M;2}(q)}{Q_{\mathsf r}^{\mathbb R^3;2}(\vec 0)}=1-\frac{\mathsf R_g(q)}{18}{\mathsf r}^2+O({\mathsf r}^4),
		\]
		and 
		\[
		\frac{Q_{\mathsf r}^{M;1}(q)}{Q_{\mathsf r}^{\mathbb R^3;1}(\vec 0)}=1-\frac{\mathsf R_g(q)}{9}{\mathsf r}^2+O({\mathsf r}^4).
		\]
		From these, the expansions in (\ref{iso:arb:man}) follow  easily. 
	\end{proof}
	
	This result says that the local behavior of the relative isoperimetric quotients ${I^{M;h,k}_{\mathsf r}(q)}/{I^{h,k}}$ is completely determined by the sign of $\mathsf R_g(q)$.
	We note however that in general this isoperimetric comparison result only holds true at small scales since the argument depends on the corresponding asymptotic expansions as ${\mathsf r}\to 0$. Our first remark here is that large scale versions of this principle hold true in the setting of asymptotically flat manifolds with non-negative scalar curvature and, as we shall see, this is closely related to the positive mass theorem in General Relativity. In the case $(h,k)=(3,2)$, this was first observed by Huisken \cite{huisken2006isoperimetric}, as we now pass to describe. 
	
	\begin{definition}\label{af:eman}
		A manifold $(M,g)$ is asymptotically flat  if there exists a compact subset $U\subset M$ and a diffeomorhism $M\backslash U\cong \mathbb R^3\backslash B_1(\vec 0)$ such that in the corresponding asymptotic coordinates $x=(x_1,x_2,x_3)$ there holds   
		\begin{equation}\label{exp:met:af:bd} 
			e:=g-\delta =O_2(r^{-\tau}), \quad \tau>\frac{1}{2}
		\end{equation}
		and 
		\begin{equation}\label{scalar:asym:bd}
			\mathsf R_g=O(r^{-3-\sigma}), \quad \sigma>0,
		\end{equation}
		as $r=|x|_\delta\to +\infty$.  
	\end{definition}
	
	Note that (\ref{scalar:asym:bd}) implies $\mathsf R_g\in L^1(M)$. Also, for a tensor $f=f(x)$ in the asympotic region, we say that $f=O_k(r^{-\tau})$ if $|(\partial_\alpha f)(x)|=O(r^{\tau-|\alpha|})$ for any multi-index $\alpha$ with  $0\leq |\alpha|\leq k$.
	
	Any manifold as in Definition \ref{af:eman} may be viewed as the time-symmetric initial data set of a solution to the Einstein field equations whose geometry at spatial infinity is essentially Minkowsk\-ian. In particular, the appropriate Hamiltonian version of Noether's theorem may be employed to attach to the Riemannian manifold $(M,g)$ certain asymptotic invariants capturing common physical quantities associated to the  isolated gravitational system modeled by the solution \cite{arnowitt1962gravitation, regge1974role, beig1987poincare,christodoulou2008mathematical, harlow2020covariant, delima2021conserved}. The most prominent of these invariants is the so-called ADM mass, which is given  by
	\begin{equation}\label{adm:mass}
		m_{ADM}=\lim_{r\to +\infty}\frac{1}{16\pi}\int_{S^2_r}{\mathbb U}({\bf 1},e)\left(\frac{x}{r}\right)dS_r^2.
	\end{equation}
	Here,  
	\[
	{\mathbb U}(f,e)=f({\rm div}_\delta e-d{\rm tr}_\delta e)-{\bf i}_{\nabla_\delta f} e+{\rm tr}_\delta e\, df, 
	\]
	$f:\mathbb R^3\to\mathbb R$ is a smooth function, ${\bf 1}$ is the function identically equal to $1$, and 
	$S_r^2$ is the coordinate sphere of radius $r$ in the asymptotic region centered at the origin. 
	
	Let us  denote by $A(r)$ (respectively $V(r)$) the area of $S_r^2$ (respectively the volume of the compact region enclosed by $S^2_r$). 
	We now recall a classical definition due to Huisken.
	
	\begin{definition}\label{huisiso}\cite{huisken2006isoperimetric}
		Under the conditions above, we set  
		\begin{equation}\label{huiiso2}
			J_r^{M;3,2}=\frac{2}{A(r)}\left(V(r)-\frac{1}{6\pi^{1/2}}A(r)^{\frac{3}{2}}\right).
		\end{equation}
	\end{definition}
	
	\begin{remark}\label{remhuis}
		This may be expressed as 
		\begin{equation}\label{huiiso3}
			J^{M;3,2}_r=2\frac{V(r)}{A(r)}\left(1-{\frac{\widehat I^{M;3,2}_r}{I^{3,2}}}\right),
		\end{equation}
		where
		\[
		\widehat I^{M;3,2}_r=\frac{A(r)^{3/2}}{V(r)}
		\]
		is the  large scale analogue of the isoperimetric quotient in (\ref{isoper:k,h}) with $(h,k)=(3,2)$ and 
		$I^{3,2}=6\pi^{1/2}$ is the corresponding quotient  evaluated on round spheres in $(\mathbb R^3,\delta)$. Thus, $J^{M;3,2}_r$ should be thought of as the asymptotic analogue of the isoperimetric deficit in  (\ref{isodef:(h,k)}) with $(h,k)=(3,2)$. 
	\end{remark}
	
	As remarked above,  the isoperimetric deficit $ J^{M;3,2}_r$ should somehow be controlled in case the standard dominant energy condition $\mathsf R_g\geq 0$ holds; here we view $(M,g)$ as a time-symmetric initial data set propagating in time to a solution of Einstein field equations, so this energy condition is justified on physical grounds. This link between scalar curvature and isoperimetry holds true indeed and the key result goes as follows.
	
	\begin{theorem}\label{huisfst} \cite{huisken2006isoperimetric} One has
		\begin{equation}\label{huisfst:2}
			\lim_{r\to +\infty}J_r^{M;3,2}=m_{ADM}.
		\end{equation}
	\end{theorem}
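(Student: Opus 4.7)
The strategy is to expand both $V(r)$ and $A(r)$ in the asymptotic region, combine them into the deficit $J_r^{M;3,2}$, and then recognize the outcome as a radial average of the ADM mass density by means of a single divergence-theorem identity. Writing $g=\delta+e$ with $e=O_2(r^{-\tau})$, $\tau>1/2$, and letting $\nu=x/r$ denote the outward Euclidean unit normal to $S_r^2$, the usual expansions of the volume and area forms of $g$ give
\[
V(r)=\tfrac{4}{3}\pi r^3+\tfrac{1}{2}\int_{B_r}{\rm tr}_\delta e\, d^3x+O(r^{3-2\tau}),
\]
\[
A(r)=4\pi r^2+\tfrac{1}{2}\int_{S_r^2}\bigl({\rm tr}_\delta e-e(\nu,\nu)\bigr) dA_\delta+O(r^{2-2\tau}).
\]
Expanding $A(r)^{3/2}/(6\pi^{1/2})$ by the binomial series, so that the leading $\tfrac{4}{3}\pi r^3$ matches that of $V(r)$, yields
\[
V(r)-\frac{A(r)^{3/2}}{6\pi^{1/2}}=\frac{1}{2}\int_{B_r}{\rm tr}_\delta e\, d^3x-\frac{r}{4}\int_{S_r^2}\bigl({\rm tr}_\delta e-e(\nu,\nu)\bigr) dA_\delta+O(r^{3-2\tau}).
\]

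\textbf{Reduction to the ADM density.} The algebraic heart of the proof is to apply the divergence theorem on $B_r$ to the Euclidean vector field $W^j=x^k e_{kj}-x^j\,{\rm tr}_\delta e$, whose divergence is
\[
\partial_j W^j=-2\,{\rm tr}_\delta e+x^k\bigl[({\rm div}_\delta e)_k-\partial_k\,{\rm tr}_\delta e\bigr]
\]
and whose flux across $S_r^2$ equals $r\int_{S_r^2}\bigl(e(\nu,\nu)-{\rm tr}_\delta e\bigr) dA_\delta$. Solving the resulting identity for $\int_{B_r}{\rm tr}_\delta e\, d^3x$ and substituting into the previous display causes the two surface integrals to cancel identically, leaving
\[
V(r)-\frac{A(r)^{3/2}}{6\pi^{1/2}}=\frac{1}{4}\int_{B_r}x^k\bigl[({\rm div}_\delta e)_k-\partial_k\,{\rm tr}_\delta e\bigr] d^3x+O(r^{3-2\tau}).
\]
Dividing by $A(r)/2=2\pi r^2(1+O(r^{-\tau}))$ and rewriting the bulk integral in spherical shells via $d^3x=dA_{\delta,\rho}\,d\rho$ and $x^k=\rho\nu^k$ on $S_\rho^2$ gives
\[
J_r^{M;3,2}=\frac{1}{8\pi r^2}\int_0^r\rho\, M(\rho)\, d\rho+o(1),\qquad M(\rho):=\int_{S_\rho^2}\nu^k\bigl[({\rm div}_\delta e)_k-\partial_k\,{\rm tr}_\delta e\bigr] dA_{\delta,\rho},
\]
and $M(\rho)$ is recognized as precisely $16\pi$ times the ADM integrand $\mathbb U(\mathbf 1,e)(x/\rho)$ appearing in (\ref{adm:mass}). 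Consequently $M(\rho)\to 16\pi m_{ADM}$ as $\rho\to\infty$, whence $\int_0^r\rho M(\rho)\, d\rho=8\pi m_{ADM}\, r^2+o(r^2)$, and therefore $J_r^{M;3,2}\to m_{ADM}$, as desired.

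\textbf{Main obstacle.} The only genuine technical issue is the sharp tracking of remainders. The quadratic terms dropped from the expansions of $V(r)$, $A(r)$ and $A(r)^{3/2}$ are of size $O(r^{3-2\tau})$, which is $o(r^2)$ exactly because $\tau>1/2$; thus the standard ADM decay threshold is used in an essentially optimal way. A second, more subtle, point is that the integrand $x^k[({\rm div}_\delta e)_k-\partial_k\,{\rm tr}_\delta e]$ only admits the naive pointwise bound $O(r^{-\tau})$, which would force the bulk integral to grow as $O(r^{3-\tau})$; the correct $O(r^2)$ growth needed in order that division by $r^2$ produce a finite limit is a consequence of the cancellations encoded in $M(\rho)$ — equivalently, of the convergence of the ADM flux integral — and is the same mechanism that makes the ADM mass well-defined in the first place.
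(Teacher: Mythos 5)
Your proposal is correct, and it reaches the limit by a genuinely different route than the one the paper follows. The paper reproduces the computation of \cite{fan2007large}: it differentiates $A(r)$ in $r$, integrates by parts on the coordinate spheres, uses the co-area formula for $\tfrac{d}{dr}V(r)$, eliminates the common term $\int_{S^2_r}e_{ij}x_ix_j r^{-3}dS^{2,\delta}_r$ between (\ref{areaderform2}) and (\ref{interres}), and then integrates the resulting relation in $r$ to obtain the volume--area identity (\ref{volarea}), from which $J^{M;3,2}_r\to m$ follows by expansion. You instead expand $V(r)$ and $A(r)$ at linear order in $e$, apply the divergence theorem once on $B_r$ to $W^j=x^ke_{kj}-x^j{\rm tr}_\delta e$ so that the two surface integrals cancel, and then identify the remaining bulk integral, via spherical shells, as a Ces\`aro average of the ADM flux $M(\rho)\to 16\pi m_{ADM}$; the convergence of that flux is exactly what the standing hypotheses ($e=O_2(r^{-\tau})$, $\tau>1/2$, $\mathsf R_g\in L^1$) guarantee, as you note. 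Your derivation is shorter and avoids differentiating $A$ and $V$ in $r$ altogether; what the paper's route buys is the intermediate identity (\ref{volarea}) together with its companion (\ref{relareak})--(\ref{relvolk}) involving the total mean curvature, which are then recycled to prove Theorem \ref{isovk} and, with minor changes, Theorem \ref{remwithbd}, whereas your identity handles only the $(3,2)$ deficit as stated. Two small bookkeeping points you gloss over, both harmless: the enclosed region contains the compact core, so the ``bulk'' integrals and the divergence theorem should really be taken on the annulus $1\le|x|\le r$, producing only $O(1)$ corrections; and one must note that replacing the $g$-area element by the Euclidean one in the ADM flux changes it by $O(r^{1-2\tau})=o(1)$, so $M(\rho)$ indeed converges to $16\pi m_{ADM}$ with your normalization. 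Your final answer is consistent with the paper's (\ref{volarea}), which in your notation reads $V(r)-\tfrac{1}{6\pi^{1/2}}A(r)^{3/2}=2\pi m r^2+o(r^2)$.
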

	
	Notice that this justifies the appearance of the extra factor $V(r)/A(r)$  in the right-hand side of (\ref{remhuis}) as the ADM mass has the dimension of length.  
	
	Combining this with the  positive mass theorem \cite{schoen1979proof} we obtain the following remarkable result.
	
	\begin{corollary}\label{remarkisolarge}
		If $(M,g)$ is asymptotically flat with scalar curvature $\mathsf R_g\geq 0$ then for $r>0$ large enough,
		\[
		{\widehat I^{M;3,2}_r}\leq {I^{3,2}},
		\] 
		with the strict inequality holding unless $(M,g)=(\mathbb R^3,\delta)$ isometrically.
	\end{corollary}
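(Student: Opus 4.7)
The plan is to combine Theorem \ref{huisfst} with the classical positive mass theorem of Schoen and Yau, using the algebraic identity for $J_r^{M;3,2}$ recorded in Remark \ref{remhuis}. First I would rewrite (\ref{huiiso3}) as
\[
1-\frac{\widehat I^{M;3,2}_r}{I^{3,2}}=\frac{A(r)}{2V(r)}J_r^{M;3,2},
\]
so that the sign of $I^{3,2}-\widehat I^{M;3,2}_r$ coincides with the sign of $J_r^{M;3,2}$ (since $A(r),V(r)>0$). By Theorem \ref{huisfst}, the right-hand side converges, modulo the positive factor $A(r)/(2V(r))$, to the ADM mass.

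Next I would invoke the positive mass theorem: under the standing hypothesis $\mathsf R_g\ge 0$ on the asymptotically flat manifold $(M,g)$, one has $m_{ADM}\ge 0$, with equality if and only if $(M,g)$ is isometric to $(\mathbb R^3,\delta)$. I now split into two cases. If $m_{ADM}>0$, then by (\ref{huisfst:2}) there exists $r_0>0$ such that $J_r^{M;3,2}>0$ for all $r\ge r_0$, which by the displayed identity gives the strict inequality $\widehat I^{M;3,2}_r<I^{3,2}$ for all such $r$. If instead $m_{ADM}=0$, rigidity forces $(M,g)=(\mathbb R^3,\delta)$ isometrically; then each $S_r^2$ is a round sphere, which saturates the Euclidean isoperimetric inequality, giving $\widehat I^{M;3,2}_r=I^{3,2}$ exactly.

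Putting the two cases together yields the claimed inequality, together with the stated rigidity statement. There is no real obstacle here: the proof is essentially a bookkeeping exercise combining the asymptotic formula (\ref{huisfst:2}) with the positivity and rigidity parts of the positive mass theorem. The only mild point to be careful about is to remember that Theorem \ref{huisfst} only controls $J_r^{M;3,2}$ in the limit $r\to\infty$, so the conclusion $\widehat I^{M;3,2}_r\le I^{3,2}$ is asserted only for $r$ sufficiently large and not for all $r$, which matches the statement of the corollary.
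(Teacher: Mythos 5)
Your argument is correct and follows essentially the same route as the paper: combine the limit $J_r^{M;3,2}\to m_{ADM}$ from Theorem \ref{huisfst} with the positivity and rigidity parts of the positive mass theorem, splitting into the cases $m_{ADM}>0$ (strict inequality for large $r$ via the identity in Remark \ref{remhuis}) and $m_{ADM}=0$ (rigidity gives flat space and equality). You merely make explicit the sign bookkeeping that the paper's two-line proof leaves implicit, including the same tacit identification of coordinate spheres with round spheres in the flat case.
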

	
	\begin{proof}
		We know that $m_{ADM}\geq 0$ by the positive mass theorem. If $m_{ADM}=0$ then $(M,g)=(\mathbb R^3,\delta)$ isometrically and the equality holds. Otherwise, $m_{ADM}>0$ and the strict inequality holds. 
	\end{proof}
	
	\begin{remark}\label{sub:isop:rem}
		The discussion above leads naturally to a conjectured $C^0$ version of the positive mass theorem. More precisely, the subisoperimetry condition in (\ref{subiso:q}) with $(h,k)=(3,2)$ may be interpreted  as the statement that $\mathsf R_g(q)\geq 0$ for a metric $g$ which is only assumed to be $C^0$. 
		This led  Huisken to conjecture that the validity of this subisoperimetry condition at any $q\in M$ implies that the mass of an asymptotically flat manifold $(M,g)$ is nonnegative, where $g$ is of class $C^0$, with the equality holding if and only if $(M,g)=(\mathbb R^3,\delta)$ isometrically. Of course, here  the mass of $(M,g)$ is defined by the limit in the left-hand side of (\ref{huisfst:2}), whenever it exists. To our knowledge, this conjecture, which constitutes a far-reaching generalization of the standard positive mass theorem in \cite{schoen1979proof},  is still wide open. In any case, we observe that a similar question may be formulated in the presence of a non-compact boundary; see Remark \ref{subiso:bd} below. 
	\end{remark}

	A proof of Theorem \ref{huisfst} appears in \cite{fan2007large} and 
	for our purposes a key observation is that a simple variation of their  computations, which we reproduce in Appendix \ref{isop:mass:app} below, proves that this remarkable connection between isoperimetry and mass also holds true for the other isoperimetric quotients involving  the total mean curvature
	\[
	M(r):=\int_{S_r^2}HdS^2_r
	\]
	of the coordinate sphere $S_r^2$. They correspond to the cases $(h,k)=(3,1)$ and $(h,k)=(2,1)$ in (\ref{iso:arb:man}) and are defined as follows. 
	
	\begin{definition}\label{isodef:(h,k)}
		Under the conditions above,  set 
		\[
		J^{M;3,1}_r=\frac{4}{3r M(r)}\left( V(r)-\frac{1}{3\cdot 2^7\cdot\pi^2} M(r)^3\right)=\frac{4}{3}\frac{ V(r)}{r M(r)}\left(1- \frac{\widehat{I}_r^{M;3,1}}{I^{3,1}}\right), 
		\] 
		and 
		\[
		J^{M;2,1}_r=\frac{1}{M(r)}\left(A(r)-\frac{1}{16\pi}M(r)^2\right)=\frac{A(r)}{M(r)}\left(1- \frac{\widehat{I}_{r}^{M;2,1}}{I^{2,1}}\right),
		\]
		where 
		\[
		\widehat{I}_{r}^{M;3,1}=\frac{M(r)^3}{V(r)},\quad \widehat{I}_{r}^{M;2,1}=\frac{M(r)^2}{A(r)}, \quad I^{3,1}=3\cdot 2^7\cdot \pi^2,\quad I^{2,1}=16\pi.
		\]
	\end{definition}

	\begin{theorem}\label{isovk}
		If $(M,g)$ is asymptotically flat then
		\begin{equation}\label{hkiso2}
			\lim_{r\to +\infty} J^{M;3,1}_r=m_{ADM},\quad \lim_{r\to +\infty} J^{M;2,1}_r=m_{ADM}.
		\end{equation}
	\end{theorem}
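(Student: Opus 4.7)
The proof will follow the strategy employed in \cite{fan2007large} to establish Theorem \ref{huisfst}, the case $(h,k)=(3,2)$ of the isoperimetric-deficit framework, reproducing it with the appropriate modifications. The starting point is to derive, in the asymptotic coordinates provided by Definition \ref{af:eman}, precise asymptotic expansions of the three geometric quantities $V(r)$, $A(r)$, and $M(r)$ in powers of the metric perturbation $e=g-\delta$. Using $\sqrt{\det g}=1+\tfrac{1}{2}\mathrm{tr}_\delta e+O(|e|^2)$, together with the analogous expansion for the induced area element and a standard first-order computation of the mean curvature of $S_r^2$ with respect to $g$, one obtains
\[
V(r) = \tfrac{4\pi r^3}{3} + \tfrac{1}{2}\int_{B_r}\mathrm{tr}_\delta e \, dx + O\bigl(r^{3-2\tau}\bigr),
\]
\[
A(r) = 4\pi r^2 + \tfrac{1}{2}\int_{S_r^2}\bigl[\mathrm{tr}_\delta e - e(N_0,N_0)\bigr]dA_0 + O\bigl(r^{2-2\tau}\bigr),
\]
\[
M(r) = 8\pi r + \int_{S_r^2}\mathcal{E}(e,\partial e)(N_0)\, dA_0 + o(1),
\]
where $N_0=x/r$ and $\mathcal{E}(e,\partial e)(N_0)$ is a universal linear expression containing $(\mathrm{div}_\delta e - d\,\mathrm{tr}_\delta e)(N_0)$ together with terms proportional to $e(N_0,N_0)/r$ and to the induced trace of $e$ on $S_r^2$ divided by $r$.

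The next step is purely algebraic. Writing $\widetilde V:=V(r)-\tfrac{4\pi r^3}{3}$, $\widetilde A:=A(r)-4\pi r^2$, $\widetilde M:=M(r)-8\pi r$ and using the normalisations $M_0^3/V_0=I^{3,1}$, $M_0^2/A_0=I^{2,1}$ built into the definitions, a short Taylor expansion of the isoperimetric deficits around the Euclidean values yields
\[
J^{M;3,1}_r = \frac{\widetilde V}{6\pi r^2} - \frac{\widetilde M}{12\pi} + o(1),\qquad J^{M;2,1}_r = \frac{\widetilde A}{8\pi r} - \frac{\widetilde M}{8\pi} + o(1),
\]
so that in each case it suffices to show that the displayed linear combination converges to $m_{ADM}$ as $r\to\infty$.

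The main point — and the principal technical obstacle — is to handle the interior integral $\tfrac{1}{2}\int_{B_r}\mathrm{tr}_\delta e\, dx$ appearing in $\widetilde V$, whose magnitude generally depends on $e$ throughout $B_r$ and not merely on its boundary behaviour. To convert it into objects that manifestly pair with the boundary expansions of $\widetilde M$ and $\widetilde A$, one uses the elementary identity $\mathrm{tr}_\delta e = \partial_j(x_i e_{ij}) - x_i (\mathrm{div}_\delta e)_i$ together with the divergence theorem, and then invokes the standard structural identity
\[
\mathsf R_g = \partial_i\bigl[(\mathrm{div}_\delta e - d\,\mathrm{tr}_\delta e)_i\bigr] + Q(e,\partial e,\partial^2 e),
\]
with $Q$ quadratic, to trade the remaining interior pieces against $\int_{B_r}\mathsf R_g\,dx$. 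The integrability of $\mathsf R_g$ in (\ref{scalar:asym:bd}) and the decay in (\ref{exp:met:af:bd}) ensure that the interior remainders have finite limits and cancel inside the combinations above, leaving
\[
\tfrac{1}{16\pi}\int_{S_r^2}\bigl[\mathrm{div}_\delta e - d\,\mathrm{tr}_\delta e\bigr](N_0)\, dA_0,
\]
which by (\ref{adm:mass}) converges precisely to $m_{ADM}$.

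Thus the heart of the argument is the careful bookkeeping that identifies the exact form of $\mathcal{E}(e,\partial e)(N_0)$ in the expansion of $M(r)$ and verifies that, after applying the divergence theorem to the volume correction in $V(r)$, the non-ADM parts of the three boundary expansions cancel identically in the linear combinations $\widetilde V/(6\pi r^2)-\widetilde M/(12\pi)$ and $\widetilde A/(8\pi r)-\widetilde M/(8\pi)$. The quadratic remainders are controlled by (\ref{exp:met:af:bd}), which gives $\tau>1/2$ and therefore ensures that all error terms are $o(1)$. The full computation is a bookkeeping exercise that parallels \cite{fan2007large} and is carried out in Appendix \ref{isop:mass:app}.
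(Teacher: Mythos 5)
Your proposal is correct in substance but follows a genuinely different route from the paper. The paper (Appendix \ref{isop:mass:app}, following \cite{fan2007large}) never linearizes the deficits around Euclidean values nor touches the bulk integral of $\mathrm{tr}_\delta e$: it differentiates $A(r)$ in $r$ to make the ADM integrand $(e_{ij,i}-e_{ii,j})x_j/r$ appear, uses the coarea formula for $\tfrac{d}{dr}V(r)$ and the first-variation identity $\tfrac{d}{dr}A(r)=\int_{S_r^2}|\nabla r|^{-1}H\,dS_r^2$ to bring in $M(r)$, eliminates the common surface term $\int e_{ij}x_ix_j/r^3$, and integrates in $r$ to obtain the algebraic relations (\ref{volarea}), (\ref{relareak}), (\ref{relvolk}); the theorem then follows by direct substitution, with only the crude expansion $H=2/r+O(r^{-\tau-1})$ needed. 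Your scheme is static rather than differential: you Taylor-expand the deficits (your reductions $J^{M;3,1}_r=\widetilde V/(6\pi r^2)-\widetilde M/(12\pi)+o(1)$ and $J^{M;2,1}_r=\widetilde A/(8\pi r)-\widetilde M/(8\pi)+o(1)$ are correct, and $\tau>1/2$ is exactly what makes the quadratic remainders $O(r^{1-2\tau})=o(1)$), and you then need (i) the full linearized mean-curvature expansion and (ii) a bulk divergence identity. Two points in your sketch need care, though both close up. First, the pointwise expansion of $H-2/r$ is not of the form you describe: it contains the cubic radial term $\tfrac12 e_{ij,k}N_iN_jN_k$ and only $\tfrac12\,d\,\mathrm{tr}_\delta e(N_0)$; it is only after a tangential integration by parts on $S_r^2$ (the same manipulation the paper performs when computing $\tfrac{d}{dr}A(r)$) that one gets $\widetilde M=\widetilde A/r-\tfrac12\int_{S_r^2}(\mathrm{div}_\delta e-d\,\mathrm{tr}_\delta e)(N_0)\,dA_0+o(1)$, which settles the $(2,1)$ case. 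Second, the leftover bulk term $\int_{B_r}x_i(\mathrm{div}_\delta e)_i\,dx$ is not traded against the unweighted $\int_{B_r}\mathsf R_g\,dx$: one must either use the quadratically weighted identity with $f=|x|^2/2$, giving $2\int_{B_r}\mathrm{tr}_\delta e\,dx=\tfrac{r^2}{2}\int_{S_r^2}(\mathrm{div}_\delta e-d\,\mathrm{tr}_\delta e)(N_0)\,dA_0-r\int_{S_r^2}(e(N_0,N_0)-\mathrm{tr}_\delta e)\,dA_0-\int_{B_r}\tfrac{|x|^2}{2}\mathsf R^{lin}\,dx$, where the last term is $o(r^2)$ by (\ref{scalar:asym:bd}) and $\tau>1/2$, or equivalently split off $x\cdot d\,\mathrm{tr}_\delta e$ and integrate the convergent mass flux radially via coarea. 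Either way one recovers $\widetilde V=\tfrac{r}{2}\widetilde A+2\pi m r^2+o(r^2)$, i.e. the paper's (\ref{volarea}), and your combinations then give $m_{ADM}$ in both cases. What each approach buys: the paper's is shorter and avoids the detailed expansion of $H$ and any bulk manipulation, at the cost of passing through the $(3,2)$ relation; yours makes the cancellation structure and the role of the ADM integrand completely explicit and is self-contained, at the cost of heavier pointwise bookkeeping and the weighted scalar-curvature identity.
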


	Again, if combined with the positive mass theorem, this result has the following nice consequence. 
	
	\begin{corollary}\label{remarkisolargeh}
		If $(M,g)$ is asymptotically flat with scalar curvature $\mathsf R_g\geq 0$ then for all $r>0$ large enough, we have the inequalities
		\[
		{\widehat I^{M;3,1}_r}\leq{I^{3,1}}, \quad 	{\widehat I^{M;2,1}_r}\leq{I^{2,1}},
		\] 
		with the strict inequality holding in each case unless $(M,g)=(\mathbb R^3,\delta)$ isometrically.
	\end{corollary}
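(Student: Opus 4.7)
The plan is to mimic verbatim the argument for Corollary \ref{remarkisolarge}, now using Theorem \ref{isovk} in place of Theorem \ref{huisfst} as the input. The positive mass theorem of Schoen-Yau asserts that an asymptotically flat $3$-manifold with $\mathsf R_g\geq 0$ has $m_{ADM}\geq 0$, with equality if and only if $(M,g)$ is isometric to $(\mathbb R^3,\delta)$. This rigidity dichotomy combines cleanly with the asymptotic identification of $J^{M;3,1}_r$ and $J^{M;2,1}_r$ with $m_{ADM}$ to deliver the two claims.

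First I would dispose of the rigidity case. If $m_{ADM}=0$, the positive mass theorem gives $(M,g)=(\mathbb R^3,\delta)$ isometrically; the asymptotic coordinate spheres $S^2_r$ are then genuine round spheres and a direct computation with $A=4\pi r^2$, $V=\frac{4\pi r^3}{3}$, $H=2/r$, $M=8\pi r$ yields
\[
\widehat I^{M;3,1}_r=\frac{M(r)^3}{V(r)}=3\cdot 2^7\cdot\pi^2=I^{3,1},\qquad \widehat I^{M;2,1}_r=\frac{M(r)^2}{A(r)}=16\pi=I^{2,1},
\]
so both inequalities hold with equality for every $r$.

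Next I would handle the nontrivial case $m_{ADM}>0$. Rewriting the definitions from Definition \ref{isodef:(h,k)},
\[
1-\frac{\widehat I^{M;3,1}_r}{I^{3,1}}=\frac{3rM(r)}{4V(r)}\,J^{M;3,1}_r,\qquad 1-\frac{\widehat I^{M;2,1}_r}{I^{2,1}}=\frac{M(r)}{A(r)}\,J^{M;2,1}_r.
\]
By the asymptotic flatness assumption \eqref{exp:met:af:bd}, the coordinate-sphere quantities $A(r)$, $V(r)$ and $M(r)$ are comparable to their Euclidean values $4\pi r^2$, $\frac{4\pi r^3}{3}$ and $8\pi r$ respectively, so the prefactors $3rM(r)/(4V(r))$ and $M(r)/A(r)$ are strictly positive for all sufficiently large $r$. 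Theorem \ref{isovk} gives that the right-hand sides above tend to a multiple of $m_{ADM}>0$ (in fact, each prefactor has a positive Euclidean limit after rescaling, though only positivity is needed here), so both expressions $1-\widehat I^{M;h,k}_r/I^{h,k}$ are positive for $r$ large, yielding the strict inequalities.

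The only delicate point, and hence the thing to verify carefully, is that the asymptotic behaviors of $A(r)$, $V(r)$, $M(r)$ really do force the prefactors to be positive rather than merely nonzero for large $r$. This is a routine consequence of expanding these quantities on coordinate spheres using $g=\delta+O_2(r^{-\tau})$, exactly in the spirit of the computations of Appendix \ref{isop:mass:app} used in the proof of Theorem \ref{isovk}, and no additional input beyond what is already in place is required.
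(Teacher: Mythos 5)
Your argument is correct and is essentially the paper's own proof: the paper likewise deduces the corollary by combining Theorem \ref{isovk} with the positive mass theorem, splitting into the rigidity case $m_{ADM}=0$ and the case $m_{ADM}>0$, where positivity of the limits of $J^{M;3,1}_r$ and $J^{M;2,1}_r$ forces the strict inequalities for large $r$ (your explicit check that the prefactors $3rM(r)/(4V(r))$ and $M(r)/A(r)$ are positive is a harmless elaboration of what the paper leaves implicit). The only looseness, the treatment of the equality case as if the coordinate spheres were exactly round, is shared verbatim with the paper's proof of Corollary \ref{remarkisolarge}, so no divergence there either.
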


	We present the proof of Theorem \ref{isovk} in Appendix \ref{isop:mass:app}. 
	
	\begin{remark}\label{sub:isop:c1}
		The subisoperimetry condition in (\ref{subiso:q}) with either $(h,k)=(3,1)$ or  $(h,k)=(2,1)$ may be interpreted  as the statement that $\mathsf R_g(q)\geq 0$ for a metric $g$ which is only assumed to be $C^1$.  Thus, similarly to the discussion in Remark \ref{sub:isop:rem}, we may ask whether the validity of any of these subisoperimetry conditions everywhere on an asymptotically flat manifold endowed with a $C^1$ metric implies that the associated mass in nonnegative, with the rigidity statement holding as well. Here, the mass should be defined by the corresponding limit in the left-hand sides of (\ref{hkiso2}), whenever it exists. It would be interesting to examine this question in light of recent developments on the positive mass theorem for metrics with low regularity; see  \cite{lee2015positive} and the references therein. 
	\end{remark}
	
	We now recall the connection between isoperimetry and another basic invariant of asymptotically flat $3$-manifolds, namely, the center of mass \cite{regge1974role,beig1987poincare}. In the following, given a tensor $f=f(x)$ on the asymptotic region, we set $f^{(1)}(x)=(f(x)+f(-x))/2$ and $f^{(-1) }=f-f^{(1)}$, so that $f^{(1)}$ and $f^{(-1)}$ are the even and odd parts of $f$, respectively. Also, we say that $f^{(1)}=\widehat O_k(r^{-\tau})$ (respectively, $f^{(-1)}=\widehat O_k(r^{-\tau})$) if 
	$(\partial_\alpha f)^{((-1)^{|\alpha|})}(x)=O(r^{-\tau-|\alpha|})$
	(respectively, $(\partial_\alpha f)^{((-1)^{1+|\alpha|})}(x)=O(r^{-\tau-|\alpha|})$) for any multi-index $\alpha$ with $0\leq|\alpha|\leq k$.

	\begin{definition}\label{regge:cond}
		We say that an asymptotically flat $3$-manifold as in Definition \ref{af:eman} satisfies the Regge-Teitelboim (RT) conditions if there holds
		\begin{equation}\label{regge:met}
			g^{(-1)}(x)={\widehat O}_2(r^{-1-\tau}), \quad \tau>\frac{1}{2},
		\end{equation}
		and 
		\begin{equation}\label{regge:scal}
			\mathsf R_g^{(-1)}(x)={\widehat O}(r^{-4-\sigma}), \quad \sigma>0.
		\end{equation}	
	\end{definition}
	
	Note that (\ref{regge:scal}) guarantees that each $x_i\mathsf R^{(-1)}_g\in L^1 (M)$ for $i=1,2,3$. Also, it is clear that $x_i\mathsf R^{(1)}_g$
	has the property that its integral over the region enclosed by two coordinate spheres vanishes. So, we may use the method in \cite{michel2011geometric}, with $x_i$ as a static potential, to ensure that  
	to any manifold as in Definition \ref{regge:cond} with $m_{ADM}\neq 0$ we may attach a (Hamiltonian) {center of mass} defined by 
	\begin{equation}\label{centmass}
		\mathcal C_i=\lim_{r\to +\infty}\frac{1}{16\pi m_{ADM}}\int_{S_{r}^2}\mathbb U(x_i,e)\left(\frac{x}{r}\right)dS_{r}^2,\quad i=1,2,3.
	\end{equation}

	\begin{example}\label{ex:schwarz}
		Recall that the Schwarzschild metric on $\mathbb R^3\backslash\{c\}$, $c\in\mathbb R^3$, is given by 
		\begin{equation}\label{sch:met}
			g_{m,c}(x)=\left(1+\frac{m}{2|x-c|_\delta}\right)^4\delta,
		\end{equation}
		where $m\in\mathbb R$. We denote $g_m=g_{m,\vec 0}$ for simplicity. 
		Clearly, $g_{m,c}$ is asymptotically flat and since $\mathsf R_{g_{m,c}}=0$, it also satisfies the RT conditions. The associated asymptotic invariants may be easily determined if we expand (\ref{sch:met}) as $r=|x|_\delta\to+\infty$:
		\begin{equation}\label{exp:flex}
			{{g_{m,c}(x)}}=\left(1+\frac{2m}{r}+\frac{2m c\cdot x }{r^3}+\frac{3m^2}{2r^2}+O(r^{-3})\right)\delta,
		\end{equation}
		where the dot is the Euclidean inner product. 
		A direct computation then gives  $m_{ADM}=m$ and ${{\mathcal C=c}}$.
	\end{example}
	
	This example indicates that the mass and the center of mass are somehow captured {{by, respectively, the first order term and the odd part of the second order term in the asymptotic expansion of the metric in $r^{-1}$}}. This motivates the consideration of the following important class of examples.

	\begin{definition}\label{asymp:schwarz}
		We say that an asymptotically flat  $3$-manifold $(M,g)$ (with $\tau=1$) is {\em{asymp\-tot\-ically Schwarzschild}} (aS) if in the asymptotic region there holds  
		\begin{equation}\label{asym-sch:def}
			g-\left(1+\frac{2m}{r}\right)\delta=O_3(r^{-2}).
		\end{equation}
	\end{definition}
	
	In general, an aS manifold may not satisfy the RT conditions as (\ref{regge:scal}) might be violated. 
	In this case, we would still have $m_{ADM}=m$ but if $m\neq 0$ the limit in (\ref{centmass}) might not exist. In fact, convergence takes place  if and only if each $x_i\mathsf R_g$ is integrable \cite{chan2014note}. 
	Examples where the limit defining the center of mass diverges may be found in \cite{cederbaum2015explicit}.
	
	\begin{definition}\label{def:asrtman}		
		If an aS manifold satisfies the RT conditions we say that it is an {\it{aSRT manifold}}.
	\end{definition}
	
	Motivated by (\ref{exp:flex}), we also consider a more flexible kind of asymptotics.

	\begin{definition}\label{def:e-as}	
		We say that an asymptotically flat $3$-manifold $(M,g)$ (with $\tau=1$) is {\em{$\epsilon$-asymp\-tot\-ically Schwarzschild}} ($\epsilon$-aS) if in the asymptotic region there holds  
		\begin{equation}\label{asym-req}
			g-f_{m,c}^{\gamma_1,\gamma_2}\delta=O_3(r^{-2-\epsilon}),\qquad f_{m,c}^{\gamma_1,\gamma_2}=1+\frac{2m}{r}+\gamma_1\frac{c\cdot x}{r^3}+\gamma_2\frac{1}{r^2}
		\end{equation}
		with $\epsilon \geq 0$, $m\neq 0$, $\gamma_1, \gamma_2\in\mathbb R$ and $c\in\mathbb R^3$. 
	\end{definition}

	\begin{remark}\label{asym:req}
		Clearly, an $\epsilon$-aS manifold is aS. Also, the case $\epsilon=0$ does not impose any further conditions on an aS $3$-manifold  and $\gamma_1$, $\gamma_2$ and $c$ are irrelevant in this case.
		If $\epsilon>0$ then $(M,g)$ is aSRT with center of mass 
		$$\mathcal C=\frac{\gamma_1}{2m}c.$$ 
		The most relevant choice in this case is $\gamma_1=2m$ and $\gamma_2=3m^2/2$, when satisfying \eqref{asym-req} is equivalent to 
		$$
		g-g_{m,c}=O_3(r^{-2-\epsilon}).
		$$
	\end{remark}

	With the appropriate definitions at hand, we now come back to the isoperimetry discussion motivated by Theorems \ref{huisfst} and \ref{isovk}. These results suggest that for aSRT manifolds with $m> 0$, large coordinate spheres  might be perturbed to yield global solutions to the corresponding isoperimetric problems (here we should assume that the competing surfaces are convex in a suitable sense so as to make sure that $M(r)> 0$).  In the classical case $(h,k)=(3,2)$,
	this has been established in \cite{eichmairlarge}. Very likely, a similar result should hold in the remaining cases treated in Theorem \ref{isovk} and we hope to address this issue elsewhere. 
	
	In any case, a first check toward this goal is to show that under these same asymptotic conditions a neighborhood of infinity can be foliated by surfaces satisfying the corresponding curvature conditions and moreover that the geometric center of the foliations relate to the center of mass of $(M,g)$ defined by Hamiltonian methods as in (\ref{centmass}). In the case $(h,k)=(3,2)$, the existence of a canonical foliation by stable CMC spheres and the identification of the geometric center of this foliation with the Hamiltonian center of mass has been first established by Huisken and Yau \cite{huisken1996definition} and then investigated further by many authors under varying asymptotic assumptions; see for instance  \cite{ye1997foliation, qing2007uniqueness,corvino2008center,metzger2007foliations,huang2008center,huang2010foliations, eichmair2013unique,chodosh2016isoperimetry,nerz2015foliations,nerz2018foliations,cederbaum2018center}. 
	Here we rely on the implicit function approach developed in \cite{ye1997foliation, huang2008center,huang2010foliations,huang2009center} to treat  
	the cases $(h,k)=(3,1)$ and $(h,k)=(2,1)$ as follows.

	\begin{theorem}\label{fol:curv:cond}
		Let $(M,g)$ be an aSRT $3$-manifold with positive ADM mass. Then there exists a neighborhood of infinity which is foliated by {strictly stable} spheres satisfying any of the curvature conditions
		\begin{equation}\label{curcond}
			\widetilde K:=K-\frac{1}{2}{\rm Ric}_g(\nu,\nu)={\rm const.},
		\end{equation}
		or 
		\begin{equation}\label{curcond2}
			\widetilde K=\gamma H.
		\end{equation}
		Here, $K$ is the Gauss-Kronecker curvature of the leaf, $\nu$ is its outward unit normal and $\gamma$ is a suitably chosen Lagrangian multiplier (varying with the leaf). Also, the geometric center $\mathcal C_{\mathcal F}$ of any of these foliations remains at a finite distance from the Hamiltonian center of mass $\mathcal C$ of $(M,g)$. 	
		Finally, if we further assume that the manifold is $\epsilon$-aS with $\epsilon>0$ then $\mathcal C_{\mathcal F}=\mathcal C$. 
	\end{theorem}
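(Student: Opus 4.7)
The plan is to reduce both curvature conditions to a constant-mean-curvature–type equation by invoking Proposition \ref{k:func:h}, which expresses the (modified) Gauss-Kronecker curvature of a perturbed large coordinate sphere in terms of its mean curvature plus a remainder of lower order in $r^{-1}$. Concretely, parametrize candidate leaves as normal graphs $\Sigma_{r,\vec{a}}(u)$ over coordinate spheres $S_r^2(\vec{a})$, where $u$ is a function on the unit sphere and $\vec{a}\in\mathbb{R}^3$ is a free center parameter. Both \eqref{curcond} and \eqref{curcond2} then translate, after suitable rescaling in $r$, into equations of the form $\mathcal{H}(r,\vec{a},u)=\mathrm{const}$, where $\mathcal{H}$ differs from the mean curvature only by terms that are strictly smaller in $r$. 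This places us precisely in the setting treated in Ye \cite{ye1997foliation} and Huang \cite{huang2008center,huang2009center}.

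The next step is the implicit function theorem at the Euclidean unit sphere $u\equiv 0$. The linearization is $L=-\Delta_{S^2}-2$, whose three-dimensional kernel consists of the first spherical harmonics and encodes the translational obstruction. Following Huang, one solves simultaneously for a graph function $u=u(r,\vec{a})$ on $(\ker L)^{\perp}$ and a correction $\vec{a}=\vec{a}(r)$ which kills the projection of $\mathcal{H}$ onto $\ker L$. The key ingredient is a center-of-mass–type identity (in spirit analogous to Proposition \ref{mc:huang:mean:b}) that evaluates this projection to leading order in terms of the asymptotic data of $g$. Once the projection is killed, the remaining equation becomes uniquely solvable by contraction on $(\ker L)^{\perp}$, producing a leaf $\Sigma_r$ for each $r$ large. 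Letting $r$ vary yields the foliation, and strict stability is obtained because, after modding out translations, the linearized Jacobi-type operator is a small perturbation of a negative-definite operator, hence invertible with definite sign for $r$ large.

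The identification of the geometric center requires a finer analysis of the balance equation determining $\vec{a}(r)$. Under the bare aSRT hypothesis, the same computations as in \cite{huang2008center,huang2009center}, now applied to the curvature $\widetilde K$ (or $\widetilde K-\gamma H$) via Proposition \ref{k:func:h}, show that the projection of $\mathcal{H}$ onto $\ker L$ differs from its mean-curvature analogue by controlled terms, so that $\vec{a}(r)$ stays bounded as $r\to\infty$ and its limit $\mathcal{C}_{\mathcal{F}}$ lies at finite distance from $\mathcal{C}$. In the stronger $\epsilon$-aS case with $\epsilon>0$, the extra decay of $g-f_{m,c}^{\gamma_1,\gamma_2}\delta$ kills the non-universal contributions in the limit, while the explicit model term $f_{m,c}^{\gamma_1,\gamma_2}\delta$ contributes exactly $\gamma_1 c/(2m)=\mathcal{C}$ (cf.\ Remark \ref{asym:req}), giving $\mathcal{C}_{\mathcal{F}}=\mathcal{C}$.

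The main obstacle is the careful bookkeeping of the translational indeterminacy. One has to verify that the integral identity evaluating the $\ker L$ projection of $\mathcal{H}$ to leading order has the same structural form after the replacement of $H$ by $\widetilde K$ (or $\widetilde K-\gamma H$), which is precisely what Proposition \ref{k:func:h} enables, and one must control the errors introduced by this replacement uniformly in $r$ so that they neither destroy the Fredholm setup nor affect the limiting value of $\vec{a}(r)$. Once these two points are secured, the existence, strict stability and center-of-mass statements follow along the lines of the mean curvature argument, with only bookkeeping changes.
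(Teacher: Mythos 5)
Your existence scheme is the same as the paper's: rewrite $\widetilde K$ (or $\widetilde K-\gamma H$) in terms of $H$ via Proposition \ref{k:func:h}, insert Huang's expansion of $H_{\rho,a}$, kill the projection of the resulting equation onto the first spherical harmonics by adjusting the center $a=a(\rho)$ using the center-of-mass identity (in the boundaryless case this is Proposition \ref{mc:huang:mean}, the analogue of the Proposition \ref{mc:huang:mean:b} you cite), and then solve uniquely on the orthogonal complement of $\ker\Delta_{(1)}$ by a contraction. Your discussion of $\mathcal C_{\mathcal F}$, of the finite-distance statement for $\epsilon=0$, and of $\mathcal C_{\mathcal F}=\mathcal C$ for $\epsilon>0$ matches (\ref{choose:a}), (\ref{remainder}) and Remark \ref{asym:req}.

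There is, however, a genuine gap in the stability and foliation part, and it is visible in the fact that your argument never uses $m>0$. The claim that ``after modding out translations, the linearized Jacobi-type operator is a small perturbation of a negative-definite operator, hence invertible with definite sign'' does not prove strict stability: the admissible variations are the volume- (or area-) preserving ones, i.e.\ mean-zero functions, and the translational modes (first spherical harmonics) \emph{belong} to this space — translations are not isometries of $(M,g)$, so they cannot be modded out. On exactly these modes the leading-order constrained form degenerates: $\int\langle\Pi\nabla_Sf,\nabla_Sf\rangle\approx\rho^{-1}\int|\nabla_Sf|^2$ against the zeroth-order term $-\tfrac{2}{\rho^3}\int f^2$ cancels identically, since the first nonzero Neumann-free eigenvalue of $-\Delta$ on the $\rho$-sphere is $2/\rho^2$. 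Positivity is therefore decided only at order $\rho^{-4}$, and this is where the positive mass enters: in the paper (Theorem \ref{stab:fol:th}) one expands $\Pi$, $HK$, $(\nabla_\nu{\rm Ric}_g)(\nu,\nu)$ and ${\rm tr}_S(\Pi\,{\rm Riem}_g^\nu)$ one order further, uses the Lichnerowicz bound $\int|\nabla_Sf|^2\ge 2\inf K_G\int f^2$ with $K_G=\rho^{-2}-2m\rho^{-3}+O(\rho^{-4})$, and obtains $V(f)\ge(3m\rho^{-4}-C\rho^{-5})\int f^2$, which is strictly positive only because $m>0$. The same refinement is needed for the foliation claim itself, which you pass over with ``letting $r$ vary yields the foliation'': one must show the leaves fit together without intersecting, which in the paper rests on the eigenvalue separation of Proposition \ref{unconst:eig} (again requiring $m>0$), the resulting invertibility of $L_{S^2_\rho(\phi_\rho)}$, the nowhere-vanishing of the lapse $f_{\rho_0}$ solving $L f_{\rho_0}=1$ (Proposition \ref{nonvan}, via Nash--Moser iteration), and then the inverse function theorem. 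Without these steps the theorem's conclusions of strict stability and of an actual foliation are not established.
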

	
	The curvature conditions  (\ref{curcond}) and (\ref{curcond2}) express the fact that the surfaces are critical configurations for the corresponding isoperimetric quotients, i.e., they extremize the total mean curvature under a volume or area constraint, respectively. The stability statement should be understood in this variational sense; see Appendix \ref{var:setup} for a discussion on this issue. 
	The proof of Theorem \ref{fol:curv:cond} is presented in Section \ref{const:fol}. 
	\begin{remark}\label{tech:prob}
		It is not clear whether the rather unsatisfactory assumption $\epsilon>0$ in the last statement of Theorem \ref{fol:curv:cond} is an artifact of  our method of proof or an intrinsic feature of the invariants involved.   
	\end{remark}

	\begin{remark}\label{iso:bm:theo}
		It is well-known that in Euclidean space $(\mathbb R^3,\delta)$ the isoperimetric quotients in (\ref{isodef:(h,k)}) with $(h,k)=(3,1)$ or $(h,k)=(2,1)$ are minimized at round spheres if some convexity assumption is imposed on the competing surfaces. In the convex case ($K\geq 0$) this follows from classical Brunn-Minkowski theory \cite{schneider2014convex}. The result holds more generally if the competing surfaces are assumed to be mean convex ($H\geq 0$) and star-shaped \cite{guan2009quermassintegral} and it is conjectured that star-shapedness  may be dispensed with; see  \cite{chang2011aleksandrov} for a survey of recent contributions in this direction.  
		The problem of extending these isoperimetric inequalities (and their analogues involving quermassintegrals of higher order) to other geometries, again under suitable convexity requirements, is quite challenging from a technical viewpoint. 
		The class of aSRT $3$-manifolds seems to be a natural choice for starting this research program, as it is reasonable to conjecture that the leaves of the foliations in Theorem \ref{fol:curv:cond} constitute {\em global} solutions to the corresponding isoperimetric problems for large values of the area or volume. See \cite{lazaro2003cauchy} for a possible approach to a version of this problem in space forms using methods from Integral Geometry.  
	\end{remark}

	We now turn {{our attention}} to asymptotically flat $3$-manifolds carrying a non-compact boundary as in \cite{almaraz2014positive}. We set $\mathbb R^{3}_+=\{x\in\mathbb R^3; x_3\geq 0\}$, the Euclidean half-space with the standard flat metric $\delta^+=\delta|_{\mathbb R^3_+}$.

	\begin{definition}\label{af:eman:bd}(\cite{almaraz2014positive})
		A $3$-manifold $(M,g)$ is asymptotically flat with a non-compact boundary $\Sigma$ if there exists a compact subset $V\subset M$ and  a diffeomorhism  $M\backslash V\cong\mathbb R^3_{+}\backslash B_1(\vec 0)$ such that in the corresponding asymptotic coordinates $x=(x_1,x_2,x_3)$ there holds   
		\begin{equation}\label{exp:met:af}
			e^+:=g-\delta_+ =O_3(r^{-\tau}), \quad \tau>\frac{1}{2},
		\end{equation}
		and 
		\begin{equation}\label{scalar:asym}
			\mathsf R_g=O(r^{-3-\sigma}), \quad H_\Sigma=O(r^{-2-\sigma}), \quad \sigma>0,
		\end{equation}
		as $r=|x|_\delta\to +\infty$. Here,  $H_\Sigma$ is the mean curvature of $\Sigma$.  
	\end{definition}
	
	Note that (\ref{scalar:asym}) implies that $\mathsf R_g\in L^1(M)$ and $H_\Sigma\in L^1(\Sigma)$. 
	
	In this setting, the asymptotic invariant corresponding to (\ref{adm:mass}) has been defined in \cite{almaraz2014positive} and is given by 
	\begin{equation}\label{mass:form:bd}
		\mathfrak m=\lim_{r\to +\infty}\frac{1}{16\pi}\left(\int_{S^2_{r,+}}\mathbb U({\bf 1},e^+)\left(\frac{x}{r}\right)dS^2_{r,+}-\int_{S^1_r}e^+\left(\frac{x}{r},\vartheta\right)dS^1_r\right),
	\end{equation}
	where 
	$S^2_{r,+}$ is the coordinate hemisphere of radius $r$, centered at the origin of $\mathbb R^3$, in the asymptotic region, 
	$S^1_r=\partial S^2_{r,+}\subset\Sigma$ and $\vartheta$ is the outward unit {{co-}}normal vector field to $S^2_{r,+}$ along $S^1_r$ (with respect to the flat metric $\delta^+$).
	In order to introduce the corresponding {\em relative} isoperimetric deficit
	we denote by $\mathcal A(r)$ (respectively $\mathcal V(r)$) the area of $S^2_{r,+}$ (respectively, the volume of the compact region enclosed by $S^2_{r,+}$ and  $\Sigma$). Similarly to (\ref{huiiso2}) we may consider
	\[
	\mathcal J_r^{M;3,2}=\frac{1}{\mathcal A(r)}\left(\mathcal V(r)-\frac{1}{3\cdot 2^{1/2}\pi^{1/2}}\mathcal A(r)^{\frac{3}{2}}\right)=\frac{\mathcal V(r)}{\mathcal A(r)}\left(1-{\frac{{\mathcal I}^{M;3,2}_r}{\mathcal I^{3,2}}}\right),
	\]
	where 
	\[
	{\mathcal I}^{M;3,2}_r=\frac{\mathcal A(r)^{\frac{3}{2}}}{\mathcal V(r)}
	\]
	and $\mathcal I^{3,2}=3\cdot 2^{1/2}\pi^{1/2}$ is the relative isoperimetric quotient {{of}} a hemisphere centered at a point in $\mathbb R^2=\partial \mathbb R^3_+$. Similarly to Theorem \ref{huisfst} we now have 
	
	\begin{theorem}\label{remwithbd}
		Under the conditions above, 
		\begin{equation}\label{remwithbd2}
			\lim_{r\to+\infty}\mathcal J_r^{M;3,2}=\mathfrak m.
		\end{equation}
	\end{theorem}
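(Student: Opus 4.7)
The plan is to adapt the Fan--Shi--Tam style computation underlying the proof of Theorems~\ref{huisfst} and~\ref{isovk} (see Appendix~\ref{isop:mass:app}) to the setting of a non-compact boundary, with a careful accounting of the extra contribution that ultimately yields the conormal integral $\int_{S^1_r}$ appearing in~\eqref{mass:form:bd}.

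First, I would expand $\mathcal V(r)$ and $\mathcal A(r)$ to first order in $e^+$. Writing $B^3_{r,+}=\{x\in\mathbb R^3_+:|x|\le r\}$, the standard expansions $\sqrt{\det g}=1+\tfrac12\,\mathrm{tr}_\delta e^++O(|e^+|^2)$ and its tangential analogue on coordinate hemispheres give
\begin{equation*}
\mathcal V(r) = \tfrac{2\pi}{3}r^3 + \tfrac12\int_{B^3_{r,+}}\mathrm{tr}_\delta e^+\,dV_\delta + O(r^{3-2\tau}),
\end{equation*}
\begin{equation*}
\mathcal A(r) = 2\pi r^2 + \tfrac12\int_{S^2_{r,+}}\bigl(\mathrm{tr}_\delta e^+-e^+(x/r,x/r)\bigr)\,dS^2_{r,+,\delta}+O(r^{2-2\tau}).
\end{equation*}
Taylor-expanding $\mathcal A(r)^{3/2}$ causes the leading $r^3$ term to cancel in $\mathcal V(r)-\frac{1}{3\sqrt{2\pi}}\mathcal A(r)^{3/2}$, and after dividing by $\mathcal A(r)\sim 2\pi r^2$ and using $\tau>\tfrac12$ to absorb the quadratic remainders as $o(1)$, I obtain
\begin{equation*}
16\pi\,\mathcal J^{M;3,2}_r = \frac{4}{r^2}\int_{B^3_{r,+}}\mathrm{tr}_\delta e^+\,dV - \frac{2}{r}\int_{S^2_{r,+}}\bigl(\mathrm{tr}_\delta e^+-e^+(x/r,x/r)\bigr)\,dS^2_{r,+} + o(1).
\end{equation*}

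Second, I would convert the bulk integral into boundary contributions via the identity $\mathrm{tr}_\delta e^+ = \mathrm{div}_\delta(e^+(X,\cdot))-\langle X,\mathrm{div}_\delta e^+\rangle$ with $X=x^j\partial_j$, together with the divergence theorem on $B^3_{r,+}$. Since $\partial B^3_{r,+}=S^2_{r,+}\cup D_r$, where $D_r=\{x_3=0,|x|\le r\}$, with respective outward normals $x/r$ and $-\partial_3$, this produces a hemisphere piece $r\int_{S^2_{r,+}}e^+(x/r,x/r)\,dS$, a disc piece involving the mixed components $e^+_{\alpha 3}|_{x_3=0}$ ($\alpha\in\{1,2\}$), and a residual bulk term depending on $\mathrm{div}_\delta e^+$. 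Iterating this reduction together with the identity $\partial_i\mathbb U(\mathbf 1,e^+)_i=\dot R_\delta(e^+)$ (the linearization of scalar curvature at $\delta^+$), and invoking $\mathsf R_g\in L^1(M)$ from~\eqref{scalar:asym} to absorb quadratic-in-$e^+$ bulk remainders, I arrange for the hemisphere contributions to reassemble into $\int_{S^2_{r,+}}\mathbb U(\mathbf 1,e^+)(x/r)\,dS^2_{r,+}$ up to $o(1)$.

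Finally, the surviving disc contribution on $D_r$ is reduced via a two-dimensional Stokes theorem to a line integral over $\partial D_r=S^1_r$. The key computation is that on $\{x_3=0\}$ one has $\mathbb U(\mathbf 1,e^+)(-\partial_3) = \pm 2H_\Sigma + \partial_\alpha e^+_{\alpha 3}$ (to leading order), so the integrability $H_\Sigma\in L^1(\Sigma)$ from~\eqref{scalar:asym} controls the bulk piece on $D_r$, while 2D Stokes converts $\int_{D_r}\partial_\alpha e^+_{\alpha 3}\,dS$ into exactly $-\int_{S^1_r}e^+(x/r,\vartheta)\,dS^1_r$ with the correct coefficient and sign, using that $\vartheta=-\partial_3$ on $S^1_r$ and $x_3=0$ there. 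Letting $r\to\infty$ then yields~\eqref{remwithbd2}. The main obstacle is organizing the interlocking integrations by parts so that the individually diverging pieces cancel and leave behind precisely the mass integrand from~\eqref{mass:form:bd}; this requires careful tracking of signs and coefficients along with the combined use of the bulk integrability $\mathsf R_g\in L^1(M)$ and the boundary integrability $H_\Sigma\in L^1(\Sigma)$, the latter being the main new ingredient absent in the boundaryless case.
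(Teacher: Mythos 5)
Your proposal is correct and reaches \eqref{remwithbd2} by a genuinely different route from the paper. The paper's proof (Appendix \ref{isop:mass:app}) is a Fan--Shi--Tam style radial argument: it differentiates $\mathcal A(r)$ in $r$, integrates by parts on the hemisphere so that the circle term of \eqref{mass:form:bd} appears through $(II^+)$ in \eqref{intparts2}, arrives at the relation \eqref{areaderformbd}, eliminates the common surface integral via the co-area formula for $\mathcal V'(r)$, and integrates back in $r$ to get \eqref{volareabd}, from which the deficit is evaluated. You instead work at a fixed radius: your expansion $16\pi\mathcal J^{M;3,2}_r=\frac{4}{r^2}\int_{B^3_{r,+}}\mathrm{tr}_\delta e^+\,dV-\frac{2}{r}\int_{S^2_{r,+}}\bigl(\mathrm{tr}_\delta e^+-e^+(\nu,\nu)\bigr)dS+o(1)$ is right (it gives $8\pi m=16\pi\mathfrak m$ on half-Schwarzschild), and the rest can indeed be done purely with divergence identities on $B^3_{r,+}$. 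To make your second step precise, supplement your identity with the radial one $\mathrm{div}_\delta(X\,\mathrm{tr}_\delta e^+)=3\,\mathrm{tr}_\delta e^++X\cdot\nabla\mathrm{tr}_\delta e^+$ and take the unique linear combination whose residual bulk integrand is $\tfrac12\,x_j\mathbb U(\mathbf 1,e^+)_j$; this yields, up to $O(r^{-1})$ inner-core terms, the exact identity $\frac{4}{r^2}\int_{B^3_{r,+}}\mathrm{tr}_\delta e^+ -\frac{2}{r}\int_{S^2_{r,+}}(\mathrm{tr}_\delta e^+-e^+(\nu,\nu))=\frac{2}{r^2}\int_{D_r}x_\alpha e^+_{3\alpha}+\frac{2}{r^2}\int_{B^3_{r,+}}x_j\mathbb U_j$. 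Writing $x_j\mathbb U_j=\partial_j\bigl(\tfrac{|x|^2}{2}\mathbb U_j\bigr)-\tfrac{|x|^2}{2}\dot R_\delta(e^+)$ then produces the hemisphere mass integrand, a disc term $-\frac{1}{r^2}\int_{D_r}|x'|^2\mathbb U_3$, and a weighted bulk remainder that vanishes by dominated convergence because $|x|^2/r^2\le 1$ and $\dot R_\delta(e^+)\in L^1$ (from \eqref{scalar:asym} and $\tau>1/2$); on the disc, $\mathbb U_3+\partial_\alpha e^+_{\alpha 3}=2\dot H_\Sigma$ and the weighted Stokes identity $\partial_\alpha\bigl(\tfrac{|x'|^2}{2}e^+_{3\alpha}\bigr)=x_\alpha e^+_{3\alpha}+\tfrac{|x'|^2}{2}\partial_\alpha e^+_{3\alpha}$ converts the two disc terms into $-\int_{S^1_r}e^+(x/r,\vartheta)\,dS^1_r$ plus $-\frac{2}{r^2}\int_{D_r}|x'|^2\dot H_\Sigma\to 0$, again by dominated convergence using $H_\Sigma\in L^1(\Sigma)$. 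So your mechanism is sound; the only cosmetic imprecision is that the Stokes step on $D_r$ must be carried out with the weight $|x'|^2/r^2$ (and the bulk step with $|x|^2/r^2$), not unweighted as stated. Comparing the two: the paper's radial-ODE method recycles the same machinery for all three deficits in Theorems \ref{huisfst} and \ref{isovk}, while your fixed-radius Pohozaev-type argument is more self-contained, makes the role of the hypotheses transparent ($\mathsf R_g\in L^1(M)$ and $H_\Sigma\in L^1(\Sigma)$ enter exactly through weighted integrals of the linearized scalar and mean curvatures), and exhibits the boundary correction in \eqref{mass:form:bd} as precisely the two-dimensional Stokes term on $\Sigma$.
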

	
	This result, whose proof is postponed to Appendix \ref{isop:mass:app}, has the following notable consequence.
	
	\begin{theorem}\label{remarkisolargebd}
		Let  $(M,g)$ be asymptotically flat with  a non-compact boundary $\Sigma$ as above. Assume further that  $\mathsf R_g\geq 0$ everywhere and $H_\Sigma\geq 0$ along the boundary. Then for all $r>0$ large enough,
		\[
		{{\mathcal I}^{M;3,2}_r}\leq{\mathcal I^{3,2}},
		\] 
		with the strict inequality holding unless $(M,g)=(\mathbb R^3_+,\delta^+)$ isometrically.
	\end{theorem}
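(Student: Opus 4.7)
The plan is to imitate the proof of Corollary \ref{remarkisolarge}, now using Theorem \ref{remwithbd} in place of Theorem \ref{huisfst}, together with the positive mass theorem for asymptotically flat $3$-manifolds carrying a non-compact boundary established in \cite{almaraz2014positive}. Concretely, under the hypotheses $\mathsf R_g\geq 0$ in $M$ and $H_\Sigma\geq 0$ along $\Sigma$, that theorem yields $\mathfrak m\geq 0$, with $\mathfrak m=0$ forcing an isometry $(M,g)\cong(\mathbb R^3_+,\delta^+)$.

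First I would dispose of the rigid case: if $\mathfrak m=0$ then the rigidity statement of the positive mass theorem in \cite{almaraz2014positive} identifies $(M,g)$ with $(\mathbb R^3_+,\delta^+)$ isometrically, in which case large coordinate hemispheres are genuine Euclidean half-balls and $\mathcal I^{M;3,2}_r=\mathcal I^{3,2}$ for every $r$, giving equality.

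Next, in the generic case $\mathfrak m>0$, I would combine this with the asymptotic identity
\[
\mathcal J_r^{M;3,2}=\frac{\mathcal V(r)}{\mathcal A(r)}\left(1-\frac{\mathcal I^{M;3,2}_r}{\mathcal I^{3,2}}\right)
\]
recorded immediately before Theorem \ref{remwithbd}. Since $\mathcal V(r)/\mathcal A(r)\to+\infty$ as $r\to+\infty$ (both quantities being comparable to their Euclidean counterparts thanks to (\ref{exp:met:af})), the conclusion $\lim_{r\to+\infty}\mathcal J_r^{M;3,2}=\mathfrak m>0$ of Theorem \ref{remwithbd} forces $1-\mathcal I^{M;3,2}_r/\mathcal I^{3,2}>0$ for all $r$ sufficiently large, which is exactly the strict relative subisoperimetric inequality claimed.

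There is essentially no obstacle beyond invoking the two ingredients in the correct order; the one mildly delicate point is verifying that the prefactor $\mathcal V(r)/\mathcal A(r)$ indeed diverges (so that a positive limit of $\mathcal J_r^{M;3,2}$ really does translate into a strict inequality of isoperimetric quotients), but this is immediate from the bound $e^+=O_3(r^{-\tau})$ which gives $\mathcal V(r)\sim \tfrac{2}{3}\pi r^3$ and $\mathcal A(r)\sim 2\pi r^2$.
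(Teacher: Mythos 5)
Your proposal is correct and follows essentially the same route as the paper, which proves this result by invoking the positive mass theorem of \cite{almaraz2014positive} and arguing exactly as in Corollary \ref{remarkisolarge}: rigidity handles $\mathfrak m=0$, and Theorem \ref{remwithbd} gives the strict inequality when $\mathfrak m>0$. The only cosmetic remark is that the divergence of $\mathcal V(r)/\mathcal A(r)$ is not actually needed---its positivity already converts $\mathcal J_r^{M;3,2}>0$ for large $r$ into the strict bound $\mathcal I^{M;3,2}_r<\mathcal I^{3,2}$.
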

	\begin{proof}
		Apply the positive mass theorem in \cite{almaraz2014positive}.
	\end{proof}
	
	\begin{remark}\label{subiso:bd}
		{Pick {\em any} smooth Riemannian $3$-manifold $(M,g)$ with boundary $\partial M$ and fix $q\in\partial M$. After introducing Fermi coordinates around $q$, we may consider the coordinate hemisphere $S_{\mathsf r,+ }^{M}(q)$ of small radius $\mathsf r>0$  centered at $q$. By using the calculations in \cite{fall2009area,montenegro2019foliation} we may easily check that, as $\mathsf r\to 0$,   
			\[
			1-\frac{\mathcal I^{M,\partial M;3,2}_{\mathsf r}(q)}{\mathcal I^{3,2}}=c^+H_{\partial M}(q)\mathsf r+O(\mathsf r^2).
			\] 
			Here, $c^+>0$ is a universal constant and
			\[
			\mathcal I^{M,\partial M;3,2}_{\mathsf r}(q)=\frac{{\mathcal A(\mathsf r)}^{3/2}}{\mathcal V(\mathsf r)}, 
			\]
			where  $\mathcal A(\mathsf r)$ and $\mathcal V(\mathsf r)$ are respectively the area of $S_{\mathsf r,+}^{M,\partial M}(q)$ and the volume it encloses jointly with $\partial M$, and $H_{\partial M}(q)$ is the mean curvature of $\partial M$ at $q$. Thus, if the metric $g$ is assumed to be merely $C^0$, we may interpret the (boundary) subisoperimetry condition
			\begin{equation}\label{sub:iso:bd}
				\mathcal I^{M,\partial M;3,2}_{\mathsf r}(q)\leq \mathcal I^{3,2},
			\end{equation}
			for all $\mathsf r>0$ small enough,
			as the statement that $H_{\partial M}(q)\geq 0$ in this  weak sense. Thus, in the spirit of Remark \ref{sub:isop:rem}} above, we are led to conjecture that for an asymptotically flat $3$-manifold with a non-compact boundary endowed with a $C^0$ metric, the fulfillments of the subisoperimetry conditions (\ref{subiso:q}) for $q$ in the interior and (\ref{sub:iso:bd}) for $q$ on the boundary imply that the mass of the manifold is nonnegative, with the equality taking place if only if it is isometric to  $(\mathbb R^3_+,\delta^+)$. Of course, here the mass should be defined by the limit in the left-hand side of (\ref{remwithbd2}), whenever it exists. Presumably, after a doubling across the boundary, this turns out to be equivalent to the original conjecture in Remark \ref{sub:isop:rem}. In any case, this would lead to a far-reaching generalization of the main result in \cite{almaraz2014positive}. 
	\end{remark}

	In order to have a well defined center of mass in the setting of Definition \ref{af:eman:bd}, we need the analogue of the RT conditions (\ref{regge:met}) and (\ref{regge:scal}). 
	Here, given a tensor $f=f(x)$ in the asymptotic region, we set $f^{(1')}(x)=(f(x',x_3)+f(-x',x_3))/2$, with $x'=(x_1,x_2)$, and $f^{(-1') }=f-f^{(1')}$.
	Since, in this region, $x_3$  is an {\em even} function of $x'$ which vanishes on $\Sigma$, namely, $x_3=\sqrt{r^2-|x'|^2}$, we may think of $f^{(1')}$ and $f^{(-1')}$ as the even and odd parts of $f$. In particular, we can mimic the discussion preceding Definition \ref{regge:cond} so as to make sense of any of the conditions $f^{(1')},f^{(-1')}\in \widehat O_k(r^{-\tau})$.

	\begin{definition}\label{regge:af:bd}
		We say that an asymptotically flat $3$-manifold with a non-compact boundary as in Definition \ref{af:eman:bd} satisfies the Regge-Teitelboim (RT) conditions if there holds
		\begin{equation}\label{regge:met:bd}
			g^{({-1}')}(x)={\widehat O}_2(r^{-1-\tau}), \quad \tau>1/2,
		\end{equation}
		and 
		\begin{equation}\label{regge:scal:bd}
			\mathsf R_g^{({-1}')}(x)={\widehat O}(r^{-4-\sigma}), \quad H_\Sigma^{({-1}')}={\widehat O}(r^{-3-\sigma}), \quad\sigma>0.
		\end{equation}		
	\end{definition}
	Note that (\ref{regge:scal:bd}) implies that each $x_\alpha\mathsf R_g^{({-1}')}\in L^1(M)$ and each $x_\alpha H_\Sigma^{({-1}')}\in L^1(\Sigma)$, $\alpha=1,2$. Also, $x_\alpha \mathsf R_g^{({1}')}$
	has the property that its integral over the region enclosed by two large coordinate hemispheres  vanishes, and similarly for the integral of $x_\alpha H_\Sigma^{({1}')}$ over the region in the boundary enclosed by two coordinate circles. Thus, we may use the method in \cite{michel2011geometric}, as adapted in \cite{almaraz2019spacetime} and with $x_\alpha$ as a static potential, to ensure that 
	to any manifold as in Definition \ref{regge:af:bd} with  $\mathfrak m \neq 0$ we may attach a (Hamiltonian) {center of mass} by 
	\begin{equation}\label{cm:bd}
		\mathcal C^+_\alpha=\lim_{r\to +\infty}\frac{1}{16\pi \mathfrak m}\left(\int_{S^{2}_{r,+}}\mathbb U(x_\alpha,e^+)\left(\frac{x}{r}\right)dS^{2}_{r,+}-\int_{S^1_{r}}x_\alpha e^+\left(\frac{x}{r},\vartheta\right)dS^1_{r}\right), \quad \alpha=1,2. 
	\end{equation}  
	This invariant has been introduced in  \cite{de2019mass}.
	
	As in the boundaryless case, we may also consider the {\em half-Schwarzschild metric}
	$g_m^+=g_m|_{\mathbb R^3_{+}\backslash \{\vec 0\}}$. 
	
	\begin{definition}\label{half:shc:def} 
		An asymptotically flat $3$-manifold with a non-compact boudnary $(M,g,\Sigma)$
		is {\em asymptotically half-Schwarzschild}
		(ahS) if a neighborhood of infinity is diffeomorphic to the complement of a hemisphere in $\mathbb R^3_+$ so that 
		\begin{equation}\label{asym:sch:+}
			g=\left(1+\frac{2m}{r}\right)\delta^++p^+,\quad p^+= O_3(r^{-2}). 
		\end{equation}
	\end{definition}
	
	As usual, if we assume further that each $x_\alpha R_g$ and $x_\alpha H_\Sigma$ are integrable then the ${\rm RT}^+$ conditions are satisfied and the center of mass $\mathcal C^+$ is well defined for ahS manifolds.

	Theorem \ref{remarkisolargebd} suggests that for an ahS manifold with $\mathfrak m>0$, large coordinate hemispheres may be perturbed to yield global solutions of the corresponding {\em relative} isoperimetric problem, where each competing surface $S$ satisfies $\partial S\subset\Sigma$ and ${\rm int}\,S\cap \Sigma=\emptyset$, with the relevant
	constrained  volume being the one  enclosed by $S$ and $\Sigma$. 
	As a first step towards this goal we establish here the following result:
	
	\begin{theorem}\label{free:af:bd}
		Assume that $(M,g,\Sigma)$ is an ahS   $3$-manifold with a non-compact boundary $\Sigma$ and satisfying the ${\rm RT}^+$ conditions. If ${\mathfrak m}=m/2 >0$ then there exists a neighborhood of infinity which is foliated by   {strictly stable} free boundary CMC hemispheres.
		Moreover, the geometric center  of this  foliation coincides with the Hamiltonian center of mass $\mathcal C^+$ of $(M,g,\Sigma)$.
	\end{theorem}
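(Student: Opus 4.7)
The plan is to adapt the implicit function method of Ye \cite{ye1997foliation} and Huang \cite{huang2008center,huang2009center} to the free boundary setting. For each pair $(r,a)$ with $r$ large and $a \in \mathbb{R}^2 \cong \partial\mathbb{R}^3_+$, I would consider the coordinate hemisphere $S^2_{r,+}(a)$ of radius $r$ centered at $a$ and parametrize nearby surfaces as normal graphs $\Sigma_{r,a,u}$ for functions $u$ on the standard unit hemisphere $\mathbb{S}^2_+$. The free boundary requirement that $\Sigma_{r,a,u}$ meet $\Sigma$ orthogonally translates to an oblique boundary condition on $u$ at $\partial\mathbb{S}^2_+$ which, modulo fast-decaying error terms coming from the remainder $p^+$, reduces to the Neumann condition $\partial u/\partial\eta = 0$, since $\partial\mathbb{R}^3_+$ is totally geodesic in the half-Schwarzschild background.

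Linearizing the free boundary CMC equation at $u\equiv 0$ in the half-Schwarzschild model and rescaling to $\mathbb{S}^2_+$ produces an operator $L_{r,a} = L_0 + (m/r)\,\mathsf P + O(r^{-2})$, where $L_0 = \Delta_{\mathbb{S}^2_+} + 2$ is equipped with Neumann condition at $\partial\mathbb{S}^2_+$. The kernel of $L_0$ is the two-dimensional space $\mathrm{span}\{x_1, x_2\}$ (the direction $x_3$ is excluded by the boundary condition), reflecting the translational isometries of $\partial\mathbb{R}^3_+$. Projecting the CMC equation onto the $L^2$-orthogonal complement of $\mathrm{span}\{x_1,x_2\}$ and applying a standard contraction argument in weighted H\"older spaces yields, for each large $r$ and each $a$, a unique small solution $u = u(r,a)$ whose mean curvature has the form $h(r) + \lambda_1(r,a)\,x_1 + \lambda_2(r,a)\,x_2$ for Lagrange-type functions $\lambda_\alpha$.

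The principal obstacle is eliminating the residual multipliers $\lambda_\alpha(r,a)$. This is where Proposition \ref{mc:huang:mean:b} is crucial: the identity it provides relates the asymptotic integrals of $x_\alpha H$ over large coordinate hemispheres to the Hamiltonian center of mass $\mathcal{C}^+$, so that testing the CMC equation against $x_\alpha$ and carefully accounting for the new corner contributions at $\partial\mathbb{S}^2_+$ yields an expansion
\[
\lambda_\alpha(r,a) = C\,m\,r^{-k}\bigl(a_\alpha - \mathcal{C}^+_\alpha\bigr) + o(r^{-k})
\]
for a universal constant $C \neq 0$ and an exponent $k$ determined by the decay rates. Since $m > 0$, a further implicit function argument in the parameter $a$ yields, for each large $r$, a unique $a(r)$ at which both multipliers vanish, with $a(r) \to \mathcal{C}^+$ as $r\to\infty$. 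The corresponding surfaces $\Sigma_r := \Sigma_{r,a(r),u(r,a(r))}$ are then free boundary CMC hemispheres; transversality of the position map in $r$ upgrades this family to a foliation, whose geometric center satisfies $\mathcal{C}_{\mathcal{F}} = \lim_{r\to\infty} a(r) = \mathcal{C}^+$ by construction.

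Strict stability of each leaf $\Sigma_r$ is extracted from the same spectral picture: on the $L^2$-orthogonal complement of constants the index form of $L_0$ is coercive, while the would-be kernel $\mathrm{span}\{x_1, x_2\}$ acquires strictly positive eigenvalues whose sign is controlled by the hypothesis $m > 0$. The principal technical novelty compared to the boundaryless argument of \cite{huang2008center} will be the careful tracking of the new corner integrals at $\partial\mathbb{S}^2_+$ and of the contributions of the $O_3(r^{-2})$ half-Schwarzschild remainder $p^+$ on the asymptotic boundary; the ${\rm RT}^+$ conditions are precisely what guarantee that all these boundary and corner error terms are absorbed into the $o(r^{-k})$ remainder above.
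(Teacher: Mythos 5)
Your proposal follows essentially the same route as the paper's proof: normal graphs over large coordinate hemispheres, the rescaled linearized operator $\Delta_{\mathbb{S}^2_+}+2$ with Neumann condition whose cokernel $\mathrm{span}\{x_1,x_2\}$ is removed by adjusting the center through the identity of Proposition \ref{mc:huang:mean:b}, yielding free boundary CMC leaves centered asymptotically at $\mathcal C^+$, with strict stability and the foliation property obtained from the $m\rho^{-3}$-level spectral estimates made possible by $\mathfrak m>0$. The differences are only presentational: the paper annihilates the obstruction by choosing the center $b_\rho$ directly inside the fixed-point scheme rather than via a Lyapunov--Schmidt reduction with multipliers $\lambda_\alpha$, and it carries out your sketched spectral step quantitatively using Reilly's identity together with the decay $\mathcal B=O(\rho^{-3})$ of the boundary second fundamental form, plus a Nash--Moser estimate for the lapse to upgrade the family to a foliation.
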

	
	The proof of this result is presented in Section \ref{stab:fol}. Again, the stability statement above should be interpreted in the sense of Appendix \ref{var:setup}.
	
	Our next result solves the relative isoperimetric problem referred to above by extending a celebrated result due to Eichmair-Metzger \cite{eichmairlarge} to our setting. 
	To state it, recall that the relative isoperimetric profile $I_g:[0,+\infty)\to [0,+\infty)$ of $(M,g,\Sigma)$ is given by 
	\[
	I_g(V)=\inf_{\Omega}\, \mathcal P(\partial^*\Omega,\interior M),
	\]
	where $\interior M=M\backslash \Sigma$ is the interior of $M$ and $\partial^*\Omega$ is the relative reduced boundary of a Borel set $\Omega\subset M$ satisfying: 
	
	i) ${\rm vol}(\Omega)=V$; \qquad
	ii) the relative perimeter $ P(\partial^*\Omega,\interior M)$ is finite. 
	
	We stress that in the computation of $\mathcal P(\partial^*\Omega,\interior M)$ only that part of the boundary area {\em inside} $\interior M$ counts, hence the qualification ``relative''. A solution to the relative isoperimetric problem is a Borel subset $\Omega$ such that ${\rm vol}(\Omega)=V$ and $\mathcal P(\partial^*\Omega,\interior M)=I_g(V)$ for some $V>0$. We then say that $\Omega$ is a (relative) isoperimetric region and its boundary $\partial \Omega\cap \interior M$ is a (relative) isoperimetric surface.

	\begin{theorem}\label{iso:ext:em1}
		Let $(M,g,\Sigma)$ be as in Theorem \ref{free:af:bd}. Then there exists $V_0>0$ such that for any $V\geq V_0$  a bounded isoperimetric region with volume $V$ exists. That region has a connected and smooth relative boundary which remains close to a centered coordinate hemisphere. Moreover, it sweeps out the whole manifold as  $V\to +\infty$. In particular, the corresponding  isoperimetric surfaces coincide with the leaves of the foliation in Theorem \ref{free:af:bd}, thus being unique for each value of the enclosed volume.  
	\end{theorem}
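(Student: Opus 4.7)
The plan is to transplant the Eichmair–Metzger scheme \cite{eichmairlarge} to the free boundary setting, using Theorem \ref{remwithbd} as the ``strict isoperimetric gain from positive mass'' and Theorem \ref{free:af:bd} together with the uniqueness discussion of Appendix \ref{uniq:stab} as the tool to identify the minimizers. Throughout, the half-Euclidean relative isoperimetric inequality provides the model profile $I_{\delta^+}(V)=(\mathcal I^{3,2})^{2/3}V^{2/3}$ (attained by flat coordinate hemispheres); Theorem \ref{remwithbd} refines it asymptotically on $(M,g,\Sigma)$ to
\[
\mathcal A(r)=(\mathcal I^{3,2})^{2/3}\mathcal V(r)^{2/3}\Bigl(1-\tfrac{2\mathfrak m}{r}+o(r^{-1})\Bigr),
\]
so that for every sufficiently large $V$ the leaf $\mathsf L_V$ of Theorem \ref{free:af:bd} enclosing volume $V$ satisfies $\mathcal P(\mathsf L_V,\interior M)\le I_{\delta^+}(V)-c\,\mathfrak m\,V^{1/3}$ for some $c>0$. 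This upper bound on $I_g(V)$ is the cornerstone of everything that follows.

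For existence, I would take a relative-perimeter minimizing sequence $\{\Omega_i\}$ of volume $V$ and apply the standard compactness/concentration splitting for sets of finite perimeter in asymptotically flat half-spaces: up to subsequence, $\Omega_i\to \Omega_\infty$ in $L^1_{\rm loc}$ with $\mathrm{vol}(\Omega_\infty)=V_0\le V$, while the escaping volume $V-V_0$ decomposes into finitely many ``pieces at infinity'' that, after diagonal translation, converge to isoperimetric sets of flat half-space $(\mathbb R^3_+,\delta^+)$, i.e.\ to Euclidean half-balls. Lower semicontinuity then yields
\[
I_g(V)\ge \mathcal P(\partial^*\Omega_\infty,\interior M)+\sum_j (\mathcal I^{3,2})^{2/3}V_j^{2/3},
\]
with $\sum_j V_j=V-V_0$. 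Since $x\mapsto x^{2/3}$ is strictly subadditive and since the whole profile enjoys a strict $\mathfrak m$--gain by the previous paragraph, a short calculation shows that any splitting ($V_0<V$ or more than one piece) is strictly beaten by the single competitor $\mathsf L_V$; this forces $V_0=V$, no piece escapes, and a minimizer $\Omega_V$ exists. The interior and free-boundary regularity of $\partial\Omega_V$ and its orthogonality against $\Sigma$ are then provided by the standard De Giorgi/Grüter/Taylor theory, and the connectedness of $\partial\Omega_V\cap\interior M$ follows from the same subadditivity--plus--gain argument applied to its connected components.

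The final step identifies $\partial\Omega_V$ with the foliation leaf $\mathsf L_V$. A curvature estimate (arguing as in \cite{eichmairlarge} but using the Euclidean free boundary CMC classification of Fraser–Li type in $\mathbb R^3_+$) yields uniform bounds on the second fundamental form of $\partial\Omega_V$ after rescaling, hence position control: $\partial\Omega_V$ cannot drift into a compact region (else blow-down converges to a flat half-ball of small radius, contradicting the $\mathfrak m$-gain) and must be, after rescaling by $V^{-1/3}$, graphical over a round hemisphere centered near the Hamiltonian center $\mathcal C^+$. Smooth convergence together with the first variation identifies it as a strictly stable free boundary CMC hemisphere close to the foliation, and the local uniqueness statement of Appendix \ref{uniq:stab} forces $\partial\Omega_V=\mathsf L_V$; the monotonicity of $\mathsf L_V\mapsto V$ then gives uniqueness of isoperimetric surfaces for each large $V$ and the sweep-out as $V\to\infty$. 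The main obstacle is the effective version of the ``no mass escape'' argument: it requires the sharp asymptotic expansion from Theorem \ref{remwithbd} to dominate not only the leading Euclidean term but also the next-to-leading correction arising from the boundary contribution in \eqref{mass:form:bd}, which did not appear in the closed case and has to be controlled by carefully exploiting that $\partial\Omega_V$ meets $\Sigma$ orthogonally.
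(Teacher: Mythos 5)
Your overall architecture (existence via compactness, ``no escape'', centering, then identification with the leaves via Appendix \ref{uniq:stab}) is the right one, but the proposal is missing the single ingredient that makes the Eichmair--Metzger scheme work, and the soft substitutes you offer cannot replace it. The mass gain you extract from Theorem \ref{remwithbd} is of order $\mathfrak m V^{1/3}$, i.e.\ a \emph{lower-order} correction to a perimeter of order $V^{2/3}$. Consequently, no argument based on blow-down convergence or concentration--compactness alone can exploit it: such arguments control the perimeter of a drifting or off-center competitor only up to errors $o(V^{2/3})$, which swamp the $V^{1/3}$ gain. In particular your step ``$\partial\Omega_V$ cannot drift into a compact region (else blow-down converges to a flat half-ball of small radius, contradicting the $\mathfrak m$-gain)'' does not follow as written. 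What is needed is precisely the \emph{effective} area comparison for large, off-center regions --- an excess of at least $c\,r$ over the centered hemisphere of the same volume, the analogue of \cite[Proposition 3.3, Theorem 3.4]{eichmairlarge} --- which is Proposition \ref{off:cent:est} in the paper. The paper obtains it by a reflection/doubling trick: reflect across the totally geodesic boundary $x_3=0$ of exact half-Schwarzschild, apply the sharp Bray-type characterization and the effective estimate in the boundaryless Schwarzschild space, halve, and transplant to ahS metrics by scaling. Your proposal never produces (or cites a substitute for) this quantitative estimate, so the centering of large isoperimetric boundaries around coordinate hemispheres --- and hence the identification with the foliation leaves --- is not established. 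Note also that Theorem \ref{remwithbd} concerns only \emph{centered coordinate} hemispheres and plays no role in the paper's proof; it cannot serve as the ``strict isoperimetric gain'' against arbitrary competitors.

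A secondary gap sits in your no-escape step. The inequality $I_g(V)\ge \mathcal P(\partial^*\Omega_\infty,\interior M)+\sum_j(\mathcal I^{3,2})^{2/3}V_j^{2/3}$ only yields a contradiction with the upper bound $I_g(V)\le(\mathcal I^{3,2})^{2/3}V^{2/3}-c\,\mathfrak m V^{1/3}$ if you also have a good \emph{lower} bound on $I_g(V_0)$ for intermediate $V_0$ (when the retained volume is comparable to $V$, subadditivity alone is not enough without knowing $I_g(V_0)$ is close to the Euclidean value), and you do not supply one. The paper avoids this by invoking the structure result \cite[Proposition 4.2]{eichmairlarge} (via \cite[Theorem 2.1]{ritore2004existence}): the escaping piece is an exact coordinate half-ball of radius $r_i$, and if it does not degenerate the residual isoperimetric boundary has constant mean curvature $2/r_i$; combined with the centering from Proposition \ref{off:cent:est} this forces the configuration to be two disjoint half-balls of roughly equal radius, which is manifestly non-isoperimetric. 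If you incorporate the reflection-based effective estimate and this CMC-matching structure argument, the rest of your outline (regularity, closeness to a centered hemisphere, and identification with the leaf via the uniqueness of Appendix \ref{uniq:stab}) aligns with the paper's proof.
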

	
	This result, whose proof is presented in Section \ref{large:rel:iso}, provides a very precise description of the relative isoperimetric
	profile of $(M,g,\Sigma)$ as above for all sufficiently large volume values.


	\section{The foliations in the boundaryless case: the proof of Theorem \ref{fol:curv:cond}}\label{const:fol}
	
	The purpose of this section is to explain how the well-known implicit function method presented in \cite{ye1997foliation,huang2008center} may be adapted to prove Theorem \ref{fol:curv:cond}.   
	We consider, in an aSRT manifold as in that theorem, the coordinate sphere $S^2_{\rho}(a)$ of radius $\rho>0$  centered at some $a\in\mathbb R^3$ to be chosen later. A justifiable assumption here is that $a$ varies in a bounded region and we will take it for granted. 
	In particular, we will sometimes ommit the dependence on $a$ in the estimates below.

	The following two results play a central role in our argument.
	
	\begin{proposition}\cite[Equation (5.1)]{huang2008center}\label{mc:huang}
		In an aS manifold, the mean curvature of  $S^2_{\rho}(a)$ at a point $x$ is 
		\begin{equation}\label{mc:huang:f}
			H_{\rho,a}(x)=\frac{2}{\rho}-\frac{4m}{\rho^2}+\frac{9m^2}{\rho^3}+\frac{6m(x-a)\cdot a}{\rho^4}+G_{\rho,a}(x)+O(\rho^{-4}),
		\end{equation}
		where 
		\begin{eqnarray}\label{gra:eq}
			G_{\rho,a}(x) & = & \frac{1}{2}p_{ij,k}(x)\mathfrak r_i \mathfrak r_j\mathfrak r_k+2\frac{p_{ij}(x)}{\rho}\mathfrak r_i\mathfrak r_j-p_{ij,i}(x)\mathfrak r_j\nonumber\\
			& & \quad -\frac{p_{ii}(x)}{\rho}+\frac{1}{2}p_{ii,j}(x)\mathfrak r_j,
		\end{eqnarray}
		with $p=g-\left(1+\frac{2m}{r}\right)\delta$ and $\mathfrak r=(x-a)/\rho$.
	\end{proposition}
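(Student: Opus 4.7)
My plan is to split $g = g_0 + p$ where $g_0 := (1 + 2m/r)\delta$ is conformally flat and $p = g - g_0 = O_3(r^{-2})$ is the genuine perturbation. The first four terms of the statement arise as the mean curvature of $S^2_\rho(a)$ in $g_0$, computed in closed form via the conformal transformation formula, while $G_{\rho,a}(x)$ arises as the first-order variation of the mean curvature induced by $p$.

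For the first part I would write $g_0 = e^{2\phi}\delta$ with $\phi = \tfrac{1}{2}\log(1 + 2m/r)$ and invoke the classical identity $H_{g_0} = e^{-\phi}(H_\delta + 2\partial_\nu\phi)$, with $\nu = \mathfrak r$ the Euclidean outer unit normal to $S^2_\rho(a)$. Expanding $r = |a + \rho\mathfrak r| = \rho + a\cdot\mathfrak r + O(\rho^{-1})$ on the sphere yields
\[
\frac{1}{r} = \frac{1}{\rho} - \frac{a\cdot\mathfrak r}{\rho^2} + O(\rho^{-3}), \qquad \partial_\nu r = 1 + O(\rho^{-2}),
\]
and a careful collection of the resulting terms (including those of order $m^2$ coming from $e^{-\phi}$ expanded to second order) produces $2/\rho - 4m/\rho^2 + 9m^2/\rho^3 + 6m(x-a)\cdot a/\rho^4 + O(\rho^{-4})$. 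The off-center contribution $6m(x-a)\cdot a/\rho^4$ appears by pairing the subleading $-(a\cdot\mathfrak r)/\rho^2$ piece of $1/r$ with the $-2m/r^2 - 2m/(r\rho)$ coefficients produced by $2\partial_\nu\phi$ and by $e^{-\phi}H_\delta$.

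For the second part I would use the standard first variation $H_g = H_{g_0} + \mathcal L_{g_0}(p) + Q(p,p)$ of mean curvature at the background $g_0$. Because $g_0 - \delta$ is isotropic and of size $O_3(\rho^{-1})$, the difference between linearizing at $g_0$ and at $\delta$ consists of terms of schematic type $p\cdot(g_0 - \delta)$ and $\partial p\cdot(g_0 - \delta)$, which are of order $O(\rho^{-4})$; hence $\mathcal L_{g_0}(p) = \mathcal L_\delta(p) + O(\rho^{-4})$. The Euclidean linearization $\mathcal L_\delta(p)$ on $S^2_\rho(a)$, computed with $\nu^i = \mathfrak r^i$, $H_\delta = 2/\rho$, and Euclidean second fundamental form $A^{ij} = \rho^{-1}(\delta^{ij} - \mathfrak r^i \mathfrak r^j)$, is a direct calculation out of $g^{ij} = \delta^{ij} - p^{ij} + \ldots$ and $\Gamma^k_{ij}(g) = \tfrac{1}{2}(p_{ik,j} + p_{jk,i} - p_{ij,k}) + \ldots$: its five contributions (normal-normal-normal third-order; the contraction $2A^{ij}p_{ij}$; the divergence $p_{ij,i}\mathfrak r_j$; the mean-trace $p_{ii}/\rho$; and the gradient-of-trace $\tfrac{1}{2}p_{ii,j}\mathfrak r_j$) match term-by-term the five summands in $G_{\rho,a}(x)$. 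Since $p = O(\rho^{-2})$, the quadratic-and-higher remainder $Q(p,p)$ is $O(\rho^{-4})$.

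The main obstacle is the bookkeeping in this second step: one must verify that replacing $\mathcal L_{g_0}(p)$ by $\mathcal L_\delta(p)$, together with the quadratic collection in the first step, leaves no hidden cross terms of order $\rho^{-3}$. The crucial feature enabling the analysis is the isotropy of $g_0 - \delta = (2m/r)\delta$, which makes all cross contractions with $p$ either reduce to already-accounted-for Schwarzschild terms or to errors of the acceptable order $O(\rho^{-4})$.
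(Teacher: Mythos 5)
Your proposal is correct, and it should be said up front that the paper offers no proof of this statement at all: Proposition \ref{mc:huang} is quoted verbatim from \cite[Equation (5.1)]{huang2008center}, so the only in-paper material to compare with is the linear expansion of the mean curvature of coordinate (hemi)spheres recalled in Appendix \ref{dens:res} from \cite[Lemma 2.1]{huang2009center}. Your two-step scheme reproduces exactly that standard derivation. For the conformal step the bookkeeping does close up: with $e^{-\phi}=1-\tfrac{m}{r}+\tfrac{3m^2}{2r^2}+O(r^{-3})$, $2\partial_\nu\phi=-\tfrac{2m}{r^2}+\tfrac{4m^2}{r^3}+O(\rho^{-4})$, $\partial_\nu r=1+O(\rho^{-2})$ and $\tfrac1r=\tfrac1\rho-\tfrac{a\cdot\mathfrak r}{\rho^2}+O(\rho^{-3})$ (legitimate since $a$ stays in a bounded set, as the paper assumes), the products give $\tfrac2\rho-\tfrac{4m}{\rho^2}+\big(4m^2+2m^2+3m^2\big)\rho^{-3}+\big(4m+2m\big)\tfrac{a\cdot\mathfrak r}{\rho^3}+O(\rho^{-4})$, i.e.\ precisely $\tfrac2\rho-\tfrac{4m}{\rho^2}+\tfrac{9m^2}{\rho^3}+\tfrac{6m(x-a)\cdot a}{\rho^4}$. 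For the perturbative step, your error accounting is right: since $p=O_3(r^{-2})$ while $g_0-\delta=O_2(r^{-1})$, the discrepancy between $\mathcal L_{g_0}(p)$ and $\mathcal L_\delta(p)$ and the quadratic remainder are both $O(\rho^{-4})$, and $\mathcal L_\delta(p)$ evaluated on the round sphere with normal $\mathfrak r$ is exactly the five-term expression $G_{\rho,a}$, as confirmed by the expansion the paper itself quotes in Appendix \ref{dens:res} with $e^+$ in place of $p$. One cosmetic caveat: your structural labels for the five summands are loose --- for instance $2A^{ij}p_{ij}=\tfrac{2}{\rho}\big(p_{ii}-p_{ij}\mathfrak r_i\mathfrak r_j\big)$ is not literally the term $\tfrac{2}{\rho}p_{ij}\mathfrak r_i\mathfrak r_j$, and the trace terms reshuffle among these pieces --- but this does not affect the substance, since what you actually need (and assert) is the identity between the Euclidean linearization and $G_{\rho,a}$ as a whole.
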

	
	\begin{proposition}\cite[Lemma 5.1]{huang2008center}\label{mc:huang:mean}
		The center of mass $\mathcal C$ of an aSRT manifold satisfies  
		\begin{equation}\label{mc:huang:mean2}
			\int_{S^2_\rho(a)}(x_i-a_i)G_{\rho,a}(x)dS_\rho^{2,\delta}(a)=-8\pi m\mathcal C_i+O(\rho^{-1}), \quad i=1,2,3,
		\end{equation}
		where $dS^{2,\delta}_\rho(a)$ is the area element induced by the flat metric. 
	\end{proposition}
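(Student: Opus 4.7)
The plan is to connect the surface integral on the left of (\ref{mc:huang:mean2}) directly to the Hamiltonian definition (\ref{centmass}) of $\mathcal{C}_i$. The bridge is furnished by Proposition \ref{mc:huang}: solving (\ref{mc:huang:f}) for $G_{\rho,a}$ we obtain
\[
G_{\rho,a}(x)=H_{\rho,a}(x)-\frac{2}{\rho}+\frac{4m}{\rho^{2}}-\frac{9m^{2}}{\rho^{3}}-\frac{6m(x-a)\cdot a}{\rho^{4}}+O(\rho^{-4}),
\]
so that the problem reduces to evaluating $\int_{S^2_\rho(a)}(x_i-a_i)H_{\rho,a}\,dS^{2,\delta}_\rho$, together with a handful of elementary spherical integrals of polynomials in $x-a$. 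Most of the latter vanish by the antisymmetry of $\mathfrak r=(x-a)/\rho$ on $S^2_\rho(a)$; the one involving $a\cdot (x-a)$ produces a contribution proportional to $m a_i$, and the $O(\rho^{-4})$ remainder is absorbed into the claimed error after multiplication by $|x-a|\sim\rho$ and integration over a surface of area $\sim\rho^2$.

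The next step is to recognize $(x_i-a_i)H_{\rho,a}$, integrated against the Euclidean measure, as equivalent (up to lower order) to the boundary integrand $\mathbb{U}(x_i-a_i,e)$ contracted with the Euclidean normal. This follows from the standard formula expressing the mean curvature of a sphere in a perturbed metric as a divergence-type expression in $e=g-\delta$: applying the divergence theorem on the Euclidean ball $B_\rho(a)$ and linearizing, $\int(x_i-a_i) H_{\rho,a}\,dS^{2,\delta}_\rho$ rearranges (modulo quadratic-in-$e$ error) into a multiple of $\int\mathbb{U}(x_i-a_i,e)(\nu)\,dS^{2,\delta}_\rho$. Splitting $\mathbb{U}(x_i-a_i,e)=\mathbb{U}(x_i,e)-a_i\mathbb{U}(1,e)$ and recalling the definitions (\ref{adm:mass}) and (\ref{centmass}), the first summand converges to a multiple of $m\mathcal{C}_i$ and the second to a multiple of $m a_i$; the latter contribution cancels against the $m a_i$ term produced in the first step, leaving $-8\pi m\mathcal{C}_i$ as the leading coefficient.

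The main obstacle is the parity bookkeeping. The Hamiltonian integrals in (\ref{adm:mass}) and (\ref{centmass}) converge only thanks to the Regge--Teitelboim condition (\ref{regge:met}), which imposes an extra order of decay on the odd part of $e$; shifting them from coordinate spheres $S^2_r(\vec 0)$ to off-center spheres $S^2_\rho(a)$ therefore requires checking that the odd-parity pieces of $e$ do not alter the limit. The second source of difficulty is the Schwarzschild contribution $(2m/r)\delta$, which enters both $\mathbb{U}(\cdot,e)$ and the explicit constants on the right of (\ref{mc:huang:f}); ensuring that its contributions cancel modulo $O(\rho^{-1})$ rather than merely $o(1)$ is where the stated error rate is earned. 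Both issues can be handled by exploiting the parity-separated estimates on $p=g-(1+2m/r)\delta$ encoded in the RT conditions, together with careful integration by parts using the harmonicity of the coordinate functions $x_i$ in $\mathbb R^3$.
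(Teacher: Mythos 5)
Your overall route is the right one, and in fact it is the same elementary strategy that the paper itself uses in Appendix \ref{dens:res} to prove the half-space analogue (Propositions \ref{int:part:cmc}, \ref{regge:ads+} and \ref{mc:huang:mean:b}), following \cite[Appendix F]{eichmair2013unique}; for the boundaryless statement the paper only cites \cite{huang2008center}. Your bookkeeping of the explicit terms is correct: the constants in (\ref{mc:huang:f}) drop out by symmetry, the term $-6m(x-a)\cdot a/\rho^4$ contributes $-8\pi m a_i$, and the claim reduces to $\int_{S^2_\rho(a)}(x_i-a_i)(H_{\rho,a}-\tfrac{2}{\rho})\,dS^{2,\delta}_\rho(a)=8\pi m(a_i-\mathcal C_i)+O(\rho^{-1})$, which is indeed $-\tfrac12\int_{S^2_\rho(a)}\mathbb U(x_i-a_i,e)\left(\mathfrak r\right)dS^{2,\delta}_\rho(a)$ up to admissible errors, so your cancellation of the $m\,a_i$ contributions closes correctly.

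The gap is in the decisive middle step, which you assert rather than prove, and for which the tools you name would not work. The identification of the weighted mean-curvature integral with the $\mathbb U$-fluxes is not obtained by the divergence theorem on the ball $B_\rho(a)$, nor does harmonicity of the $x_i$ enter; the obstruction is the cubic term $\tfrac12 e_{ij,k}\mathfrak r_i\mathfrak r_j\mathfrak r_k$ in the linearized mean curvature (the $p$-version is the first term of (\ref{gra:eq})), which must be traded for terms linear in $\mathfrak r$ by the \emph{tangential} divergence theorem on the round sphere, i.e.\ the first-variation identity obtained from (\ref{first:var:area2:0})--(\ref{first:var:area}) with $H_\delta=2/\rho$, $\nu=\mathfrak r$, applied to the vector field $(x_i-a_i)e_{jk}\mathfrak r_j\,\partial/\partial x_k$ --- precisely the boundaryless analogue of Proposition \ref{int:part:cmc}. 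Second, discarding a ``quadratic-in-$e$ error'' is not legitimate as stated: with $e=O_2(r^{-1})$ that remainder is only $O(\rho^{-3})$, so after weighting by $x_i-a_i\sim\rho$ and integrating over area $\sim\rho^2$ it is a priori $O(1)$, the same order as the quantity being computed. One must either expand about the Schwarzschild part, so that the nonlinear contribution of $(2m/r)\delta$ sits in the explicit constants of (\ref{mc:huang:f}) and the remaining deviation $p=O(\rho^{-2})$ produces harmless quadratic terms, or invoke the parity estimate on the remainder itself, $E^{(-1)}=O(\rho^{-2-2\tau})$ as in \cite[Lemma 2.1]{huang2009center}; likewise the passage from centered to off-center spheres in (\ref{adm:mass}) and (\ref{centmass}) costs only $O(\rho^{-1})$ because $a$ stays bounded and the RT conditions control the odd parts. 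You flag both issues as ``difficulties,'' but resolving them is exactly the content of the lemma; with these two points supplied your outline becomes the proof in \cite{eichmair2013unique} that the paper adapts.
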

	
	Let $(M,g)$ be an aSRT $3$-manifold as in Theorem \ref{fol:curv:cond} and assume that it is also $\epsilon$-aS  with $\epsilon\geq 0$ and some fixed $c\in\mathbb R^3$ (recall that $c$ is irrelevant for the case $\epsilon=0$).
	The key observation now is that we can express the Gauss-Kronecker curvature $K_{\rho,a}$ of $S^2_\rho(a)$ in terms of the mean curvature $H_{\rho,a}$ up to terms decaying fast enough. This will allow us to make use of Propositions \ref{mc:huang} and \ref{mc:huang:mean}.
	
	\begin{proposition}\label{k:func:h}
		There holds
		\begin{equation}\label{K:func:h:f}
			2\rho K_{\rho,a}=H_{\rho,a}\left(1-\frac{2m}{\rho}
			+\frac{9m^2-3\gamma_2}{2\rho^2}+\frac{3m}{\rho^3}x\cdot a-\frac{3\gamma_1}{2\rho^3}x\cdot c+ O(\rho^{-2-\epsilon})\right)+O(\rho^{-5}). 
		\end{equation}
	\end{proposition}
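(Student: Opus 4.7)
The strategy is to exploit the conformal flatness of the leading-order piece of the metric. Write
\[
g = \tilde g + q, \qquad \tilde g := f_{m,c}^{\gamma_1,\gamma_2}\,\delta, \qquad q = g - \tilde g = O_3(r^{-2-\epsilon}).
\]
Since $\tilde g$ is pointwise conformal to the Euclidean metric, every coordinate sphere $S^2_\rho(a)$ remains totally umbilic with respect to $\tilde g$: conformal changes preserve umbilicity, and spheres are already umbilic in the flat background. Consequently, denoting by $\tilde H_{\rho,a}$ and $\tilde K_{\rho,a}$ the mean and Gauss--Kronecker curvatures with respect to $\tilde g$, the two principal curvatures coincide and
\[
\tilde K_{\rho,a} \;=\; \Bigl(\tfrac{\tilde H_{\rho,a}}{2}\Bigr)^{\!2}, \qquad \text{so} \qquad 2\rho\,\tilde K_{\rho,a} \;=\; \tilde H_{\rho,a}\cdot \tfrac{\rho\,\tilde H_{\rho,a}}{2}.
\]
This identity is the algebraic skeleton of (\ref{K:func:h:f}); everything else is bookkeeping.

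First I would expand $\rho\,\tilde H_{\rho,a}/2$ in powers of $1/\rho$. Writing $\tilde g = e^{2u}\delta$ with $e^{2u} = f_{m,c}^{\gamma_1,\gamma_2}$, the conformal-change formula gives
\[
\tilde H_{\rho,a} = e^{-u}\bigl(\tfrac{2}{\rho} + 2\,N^\delta(u)\bigr),
\]
so $\rho\,\tilde H_{\rho,a}/2 = f^{-1/2}\bigl(1 + (x-a)\cdot\nabla u\bigr)$. Expanding $f^{-1/2}$ and $\nabla u = \nabla f/(2f)$ on $S^2_\rho(a)$ by means of $r = \rho + a\cdot\mathfrak r + O(\rho^{-1})$ with $\mathfrak r = (x-a)/\rho$, I collect terms through order $\rho^{-3}$. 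The $\rho^{-1}$-coefficient contributes $-2m$; from the $m^2/r^2$ cross-term in $f^{-1/2}$ and the $\gamma_2/r^2$ term I obtain the $\rho^{-2}$-coefficient $(9m^2 - 3\gamma_2)/2$; from the linear-in-$(x-a)\cdot a$ correction to $m/r$ and from $(x-a)\cdot\nabla(\gamma_1 c\cdot x/r^3)$ I obtain the $\rho^{-3}$-coefficient $3m\,(x\cdot a) - (3\gamma_1/2)(x\cdot c)$. This matches the bracket in (\ref{K:func:h:f}) up to terms of size $O(\rho^{-3})$ which, after multiplication by $\tilde H \sim 2/\rho$, feed into the additive $O(\rho^{-5})$ remainder.

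Second, to pass from $\tilde g$ to $g$ I account linearly for the perturbation $q = O_3(r^{-2-\epsilon})$. The first two derivatives of $q$ being of the same order, the induced corrections to the shape operator of $S^2_\rho(a)$ are of size $O(\rho^{-2-\epsilon})$, so
\[
H_{\rho,a} - \tilde H_{\rho,a} = O(\rho^{-2-\epsilon}), \qquad K_{\rho,a} - \tilde K_{\rho,a} = O(\rho^{-3-\epsilon}),
\]
the latter because $K$ is quadratic in the principal curvatures, one of which is of order $\rho^{-1}$. Substituting into $2\rho K_{\rho,a} = 2\rho\,\tilde K_{\rho,a} + O(\rho^{-2-\epsilon})$ and using the umbilic identity for $\tilde g$, the extra $O(\rho^{-2-\epsilon})$ is absorbed as a multiplicative error inside the bracket, producing the stated formula.

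The main obstacle is the algebraic bookkeeping in the first step: one must carefully isolate the coefficients of $1/\rho^2$ and $1/\rho^3$ in $f^{-1/2}\bigl(1 + (x-a)\cdot\nabla u\bigr)$ (distinguishing contributions from $f^{-1/2}$, from $\nabla u$, and from the expansion of $1/r$ about $1/\rho$), and verify that every residual $O(\rho^{-3})$ term not matching the bracket really ends up inside the additive $O(\rho^{-5})$ once multiplied by $H \sim 2/\rho$. Once this is organized, the perturbative step and the error-tracking are routine, and (\ref{K:func:h:f}) follows.
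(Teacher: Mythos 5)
Your overall strategy (peel off the conformally flat leading metric $\tilde g=f_{m,c}^{\gamma_1,\gamma_2}\delta$, use that coordinate spheres stay umbilic under a conformal change so $\tilde K=(\tilde H/2)^2$, then perturb by $q=g-\tilde g$) is a legitimate and genuinely different route from the paper, which instead works with the full metric $g$ and the radial field $X=(x_i-a_i)\partial_{x_i}$: since $X$ is conformal for $\tilde g$, it is \emph{almost} conformal for $g$, and contracting $\mathcal L_Xg=2\xi g+B$ with the Newton tensor $\Pi\approx\tfrac12 H\,I$ along the sphere yields a pointwise identity $2K\langle X,\nu\rangle=\xi H+\dots$ that never requires expanding the second fundamental form of $g$ directly. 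Your expansion of $\rho\tilde H/2=f^{-1/2}\bigl(1+(x-a)\cdot\nabla u\bigr)$ is exactly the paper's factor $\xi\cdot f^{-1/2}$, and the target coefficients you list are the correct ones.

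However, there is a genuine quantitative gap in your perturbation step, and it sits precisely where the content of the proposition lies. You claim $H_{\rho,a}-\tilde H_{\rho,a}=O(\rho^{-2-\epsilon})$ and $K_{\rho,a}-\tilde K_{\rho,a}=O(\rho^{-3-\epsilon})$, hence an additive error $O(\rho^{-2-\epsilon})$ in $2\rho K_{\rho,a}$, and assert it can be ``absorbed as a multiplicative error inside the bracket.'' It cannot: a multiplicative $O(\rho^{-2-\epsilon})$ inside the bracket is worth $H_{\rho,a}\cdot O(\rho^{-2-\epsilon})=O(\rho^{-3-\epsilon})$ additively, one full power of $\rho$ smaller than what your bounds give; with your estimates you would only recover the bracket up to $O(\rho^{-1-\epsilon})$, which swallows the $\rho^{-2}$ and $\rho^{-3}$ terms that the proposition is designed to capture. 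The fix is to sharpen the perturbation estimate: the first-order change of the second fundamental form under $g=\tilde g+q$ is controlled by $\partial q$ and by $q$ times the existing shape operator, i.e.\ by $O(\rho^{-3-\epsilon})+O(\rho^{-2-\epsilon})\cdot O(\rho^{-1})=O(\rho^{-3-\epsilon})$ (this is exactly the scaling visible in the term $G_{\rho,a}$ of Proposition \ref{mc:huang}, where $p=O_3(r^{-2})$ contributes at order $\rho^{-3}$ to $H$). Hence in fact $H_{\rho,a}-\tilde H_{\rho,a}=O(\rho^{-3-\epsilon})$ and $K_{\rho,a}-\tilde K_{\rho,a}=O(\rho^{-4-\epsilon})$, so $2\rho(K_{\rho,a}-\tilde K_{\rho,a})=O(\rho^{-3-\epsilon})=H_{\rho,a}\cdot O(\rho^{-2-\epsilon})$, and the same sharp bound lets you replace the prefactor $\tilde H_{\rho,a}$ by $H_{\rho,a}$ in $2\rho\tilde K_{\rho,a}=\tilde H_{\rho,a}\cdot(\rho\tilde H_{\rho,a}/2)$ -- a substitution your write-up does not address but which is needed, since the statement multiplies the bracket by the mean curvature of $g$, not of $\tilde g$. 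With these two corrections your argument closes and is, if anything, more elementary than the paper's conformal-vector-field identity, at the price of having to justify the second-fundamental-form perturbation estimate that the paper's method avoids.
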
 
	The proof of Proposition \ref{k:func:h}, which relies on the fact that the Schwarzschild space carries a radial conformal vector field,  is deferred to Appendix \ref{conf:field}.
	\begin{corollary}\label{k:func:h:c}
		There holds
		\begin{equation}\label{gk:huang:f}
			2\rho \widetilde K_{\rho,a}
			=
			\frac{2}{\rho}-\frac{6m}{\rho^2}+\frac{{26}m^2-3c\gamma_2}{\rho^3}+\frac{12m}{\rho^4}x\cdot a-\frac{3\gamma_1}{\rho^4}x\cdot c+ G_{\rho,a}(x)+{O(\rho^{-3-\epsilon})}. 
		\end{equation}
	\end{corollary}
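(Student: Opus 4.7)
The plan is to combine Propositions \ref{mc:huang} and \ref{k:func:h} directly and then pass from $K_{\rho,a}$ to $\widetilde K_{\rho,a} = K_{\rho,a} - \tfrac{1}{2}\mathrm{Ric}_g(\nu,\nu)$ by computing the Ricci contribution in the $\epsilon$-aS regime.

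First I would substitute the expansion (\ref{mc:huang:f}) for $H_{\rho,a}$ into the factorization (\ref{K:func:h:f}) of $2\rho K_{\rho,a}$ and multiply out, tracking every pairing that survives at order $O(\rho^{-3-\epsilon})$. Because $H_{\rho,a} = 2/\rho + O(\rho^{-2})$ while the parenthetical factor in (\ref{K:func:h:f}) is $1 + O(\rho^{-1})$, only a handful of pairings contribute at the stated orders: the $(2/\rho)$ piece distributes across the full parenthetical; the $-4m/\rho^2$ piece of $H_{\rho,a}$ pairs with $1$ and with $-2m/\rho$; and the $9m^2/\rho^3$, $6m(x-a)\cdot a/\rho^4$, and $G_{\rho,a}(x)$ pieces pair only with $1$. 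Splitting $(x-a)\cdot a = x\cdot a - |a|^2$ and collecting coefficients, the $(x\cdot a)/\rho^4$ coefficient emerges as the sum $6m+6m=12m$ of two contributions (one from $(2/\rho)\cdot 3m(x\cdot a)/\rho^3$, the other from $6m(x-a)\cdot a/\rho^4\cdot 1$); the $(x\cdot c)/\rho^4$ coefficient comes solely from the $(2/\rho)\cdot(-3\gamma_1 x\cdot c/(2\rho^3))$ pairing; and the constant-in-$x$ term at $\rho^{-3}$ assembles from the three contributions $9m^2$, $9m^2 - 3\gamma_2$, and $8m^2$.

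Second I would compute $\rho\,\mathrm{Ric}_g(\nu,\nu)$. Writing $g = e^{2\phi}\delta + O_3(r^{-2-\epsilon})$ with $\phi = m/r + O(r^{-2})$ extracted from (\ref{asym-req}), the standard conformal-change identity in dimension three produces
\[
\mathrm{Ric}_g(N,N) = -\nabla^2 \phi(N,N) + (d\phi(N))^2 - \Delta \phi - |d\phi|^2 + O(r^{-4-\epsilon})
\]
for any $\delta$-unit vector $N$, with $\nabla^2$, $\Delta$, and $|\cdot|^2$ all referring to the flat metric. Evaluating at the radial direction $\hat r = x/r$ and using the harmonicity of $1/r$ in $\mathbb R^3$ yields $\mathrm{Ric}_g(\hat r,\hat r) = -2m/r^3 + O(r^{-4})$. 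The outward $g$-unit normal to $S^2_\rho(a)$ has the form $\nu = e^{-\phi}\mathfrak r$ with $\mathfrak r = (x-a)/\rho$; it differs from $\hat r$ by a tangential correction of size $\rho^{-1}$, and the block-diagonal radial/tangential structure of the Schwarzschild Ricci tensor forces this correction to affect $\mathrm{Ric}_g(\nu,\nu)$ only at order $\rho^{-5}$. Combined with $r = \rho + \mathfrak r\cdot a + O(\rho^{-1})$ on the sphere and the $e^{-2\phi}$ rescaling between $\nu$ and $\mathfrak r$, one arrives at $\rho\,\mathrm{Ric}_g(\nu,\nu) = -2m/\rho^2 + O(\rho^{-3-\epsilon})$. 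Subtracting this from $2\rho K_{\rho,a}$ converts the $-8m/\rho^2$ coefficient to the asserted $-6m/\rho^2$ while leaving the other explicit terms untouched modulo the stated remainder.

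The principal obstacle is uniform bookkeeping: one must verify that all residual contributions---the constant-in-$x$ piece $-6m|a|^2/\rho^4$ produced by the multiplication, the subleading terms in $\mathrm{Ric}_g$ at $r^{-4}$ coming from the $\gamma_1$ and $\gamma_2$ corrections to $\phi$, and the discrepancy between $\nu$ and $\hat r$---all assemble into an error uniformly controlled by $\rho^{-3-\epsilon}$ as $a$ ranges over a bounded set. An auxiliary check is that the non-conformal perturbation $O_3(r^{-2-\epsilon})$ of $g$ contributes to $\mathrm{Ric}_g$ only at order $r^{-4-\epsilon}$, hence at order $\rho^{-3-\epsilon}$ after the $\rho$-multiplication; this is exactly why the $\epsilon$ in the stated remainder matches the $\epsilon$ in the hypothesis.
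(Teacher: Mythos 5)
Your proposal is correct and takes essentially the same route as the paper: its proof of this corollary is exactly the combination of (\ref{K:func:h:f}), (\ref{mc:huang:f}) and the normal Ricci expansion (\ref{ricc:norm}), and your coefficient bookkeeping (the sum $9m^2+(9m^2-3\gamma_2)+8m^2$ at order $\rho^{-3}$, the $6m+6m=12m$ coefficient of $x\cdot a/\rho^{4}$, the $-3\gamma_1$ coefficient of $x\cdot c/\rho^{4}$, and the shift $-8m/\rho^{2}\to-6m/\rho^{2}$ coming from subtracting $\rho\,{\rm Ric}_g(\nu,\nu)$) reproduces (\ref{gk:huang:f}). The only addition is your conformal-change derivation of ${\rm Ric}_g(\nu,\nu)=-2m/\rho^{3}+O(\rho^{-4-\epsilon})$, which the paper simply quotes as (\ref{ricc:norm}), so you assume nothing beyond the paper's own ingredients.
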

	\begin{proof}
		Combine (\ref{K:func:h:f}), (\ref{mc:huang:f}) and the fact that 
		\begin{equation}\label{ricc:norm}
			{\rm Ric}_g(\nu,\nu)=-\frac{2m}{\rho^3}+{O}(\rho^{-4-\epsilon}). 
		\end{equation}
	\end{proof}

	To proceed we consider for a function $\phi\in C^{2,\alpha}(S^2_\rho(a))$ the corresponding normal graphical surface over $S^2_\rho(a)$:
	\begin{equation}\label{graph:surf}
		S^2_\rho(a,\phi)=\{x+\rho^{-\theta}\phi(x)\nu(x);x\in S^2_\rho(a)\},\quad \theta\in (0,1).
	\end{equation}
	By the Taylor's formula, the modified Gauss-Kronecker curvature $\widetilde K_\rho(a,\rho^{-\theta}\phi)$ of $S^2_\rho(a,\phi)$ expands as 
	\begin{equation}\label{exp:K}
		\widetilde K_\rho(a,\rho^{-\theta}\phi)=\widetilde K_\rho(a,0)+d\widetilde K_\rho(a,0) (\rho^{-\theta}\phi)+\int_0^1(1-s)d^2\widetilde K_\rho(a,s\rho^{-\theta}\phi)(\rho^{-\theta}\phi,\rho^{-\theta}\phi)ds.
	\end{equation}
	We now observe that $\widetilde K_\rho(a,0)=\widetilde K_{\rho,a}$ and $d\widetilde K_\rho(a,0)=L_{S^2_\rho(a)}$ is the Jacobi operator appearing in (\ref{stab:op:gk}), whose asymptotic behavior we need to determine. With this goal in mind  
	we introduce local coordinates $\{y_1,y_2\}$ on $S^2_\rho(a)$ and let $\delta_\rho$ be the induced {\em Euclidean} metric, so that $h=g|_{S^2_\rho(a)}$ is given by
	\[
	h=\left(1+\frac{2m}{\rho}\right)\delta_\rho+O(\rho^{-2}). 
	\]

	\begin{proposition}\label{stabop:asymp}
		One has 
		\[
		L_{S^2_\rho(a)}=-\frac{1}{\rho}\left(\Delta_{\rho}+\frac{2}{\rho^2}\right)+O(\rho^{-4}), 
		\]
		where $\Delta_{\rho}$ is the Laplacian with respect to $\delta_\rho$. 
	\end{proposition}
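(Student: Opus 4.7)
The plan is to linearize $\widetilde K = K - \tfrac{1}{2}{\rm Ric}_g(\nu,\nu)$ at the surface $S^2_\rho(a)$ under a normal variation by a function $u$, and to show that the leading operator is the Euclidean answer $-\tfrac{1}{\rho}(\Delta_\rho + 2/\rho^2)$, with every correction controlled by $O(\rho^{-4})$ in the operator-norm sense appropriate for the implicit function scheme of Section \ref{const:fol}.

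I would begin from the general Jacobi identities on a $2$-surface in a Riemannian $3$-manifold. The variation of the mean curvature is $\delta H = -\Delta_h u - (|A|^2 + {\rm Ric}_g(\nu,\nu))u$, where $h$ is the induced metric. Splitting the shape operator as $W = \lambda I + T$ into its umbilical and traceless parts, the elementary identity
\[
\delta K = H\,\delta H - {\rm tr}(W\,\delta W) = \lambda\,\delta H - {\rm tr}(T\,\delta W)
\]
combined with the definition of $\widetilde K$ yields
\[
L_{S^2_\rho(a)}u = -\lambda\,\Delta_h u - \lambda\bigl(|A|^2 + {\rm Ric}_g(\nu,\nu)\bigr)u - {\rm tr}(T\,\delta W) - \tfrac{1}{2}\,\delta\bigl({\rm Ric}_g(\nu,\nu)\bigr).
\]

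Next I would invoke the Schwarzschild-type asymptotics. The leading piece of the metric, $(1+2m/r)\delta$, is conformally flat, so $S^2_\rho(a)$ is umbilical with respect to it; consequently the traceless defect comes entirely from the $O(r^{-2})$ remainder $p = g-(1+2m/r)\delta$ and satisfies $T = O(\rho^{-3})$. At the same time $\lambda = 1/\rho + O(\rho^{-2})$, $|A|^2 = 2/\rho^2 + O(\rho^{-3})$, and ${\rm Ric}_g(\nu,\nu) = O(\rho^{-3})$ by (\ref{ricc:norm}). The two-dimensional conformal relation applied to $h = (1+2m/\rho)\delta_\rho + O(\rho^{-2})$ gives $\Delta_h = (1-2m/\rho+O(\rho^{-2}))\Delta_\rho$. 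Since $\delta W$ involves two derivatives of $u$ in the induced frame, it has size $O(\rho^{-2})$ on functions with bounded $C^{2,\alpha}$-norm, so ${\rm tr}(T\,\delta W) = O(\rho^{-5})$; the variation $\delta({\rm Ric}_g(\nu,\nu))$, involving normal derivatives of ${\rm Ric}_g$ and the tangential component of $\delta\nu$, likewise contributes only at order $O(\rho^{-4})$.

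Assembling these estimates, the zeroth-order term becomes $-\lambda(|A|^2+{\rm Ric}_g(\nu,\nu)) = -\tfrac{2}{\rho^3} + O(\rho^{-4})$ while the principal part becomes $-\lambda\,\Delta_h = -\tfrac{1}{\rho}\Delta_\rho + O(\rho^{-4})$, the latter remainder absorbing the correction $(4m/\rho^2)\Delta_\rho$ whose operator norm on the low-frequency functions relevant to Section \ref{const:fol} is $O(\rho^{-4})$ because $\Delta_\rho$ itself shrinks like $\rho^{-2}$ on such modes. Combining yields the announced expansion. The main technical point I foresee is the bookkeeping of the correction $\Delta_h - \Delta_\rho$: although it is a genuine second-order differential operator whose coefficients decay only like $1/\rho$, its contribution to $L_{S^2_\rho(a)}$ legitimately lies in the remainder precisely because the eigenvalues of $\Delta_\rho$ on the function spaces over which the implicit function method will be deployed are already of size $O(\rho^{-2})$.
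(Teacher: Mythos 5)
Your argument is correct, and it reaches the expansion by a slightly different organization of the computation than the paper's. The paper works directly with the divergence-form operator $\Lambda_S={\rm div}_S(\Pi\nabla_S\cdot)$ from (\ref{stab:op:gk}): it writes $\Lambda_{S^2_\rho(a)}$ in local coordinates, uses the expansion (\ref{pos:pi}) of the Newton tensor together with $h=(1+2m/\rho)\delta_\rho+O(\rho^{-2})$ (so that, in dimension two, the conformal factor cancels between $\sqrt{\det h}$ and $h^{AC}$ inside the divergence), and then disposes of the zeroth-order terms by computing $H_{\rho,a}K_{\rho,a}=2\rho^{-3}+O(\rho^{-4})$ from (\ref{mc:huang:f}) and (\ref{gk:huang:f}) and estimating the curvature terms via (\ref{ricc:norm}) and (\ref{Riem:nor}). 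You instead split $W=\lambda I+T$, so that $\Pi=\lambda I-T$ and $\delta\widetilde K=\lambda\,\delta H-{\rm tr}(T\,\delta W)-\tfrac12\,\delta({\rm Ric}_g(\nu,\nu))$, and you then lean on the known Jacobi operator for $H$, on the conformal invariance of umbilicity (giving $T=O(\rho^{-3})$), and on the two-dimensional conformal relation for $\Delta_h$; this buys you a route that never needs the Codazzi/divergence-form manipulation, at the price of having to control $\delta W$ and $\delta({\rm Ric}_g(\nu,\nu))$ separately, which your estimates (consistent with (\ref{varshape}) and (\ref{varricterm})) do at the required order. Two remarks. First, you are right, and commendably explicit, that the remainder $O(\rho^{-4})$ can only be meant in the scaled operator sense, i.e.\ after absorbing terms of the form $cm\rho^{-2}\Delta_\rho$; the paper's own computation performs the same absorption silently (the outer factor $(\det h)^{-1/2}$ versus $(\det\delta_\rho)^{-1/2}$ produces exactly such a term), and this is legitimate because the operator is only ever applied to functions whose pullback to $S^2_1(\vec 0)$ is bounded in $C^{2,\alpha}$, as in (\ref{eq:phi:1:a}). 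Second, your phrasing ``low-frequency modes/eigenvalues of size $O(\rho^{-2})$'' is slightly off as stated (high-frequency eigenvalues of $\Delta_\rho$ are not small); the correct justification is the scaling one you also give, namely that $\Delta_\rho u=O(\rho^{-2})$ pointwise for functions with bounded rescaled $C^2$ norm, which is precisely the class used in Section \ref{const:fol}. With that reading, your proof is sound.
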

	
	\begin{proof}
		With the notation of Appendix \ref{var:setup} we have
		\[
		\Lambda_{S^2_\rho(a)}=\frac{1}{\sqrt{\det h}}\partial_B\left(\sqrt{\det h}\,h^{AC}\Pi^B_A\partial_C\right), \quad \partial_A=\partial/\partial y_A.
		\]
		Since 
		\[
		\sqrt{\det h}=\left(1+\frac{2m}{\rho}\right)\sqrt{\det \delta_\rho}+O(\rho^{-2}), \quad 
		h^{AC}=\left(1+\frac{2m}{\rho}\right)^{-1}\delta_\rho^{AC}+O(\rho^{-2}),
		\] 
		and 
		\begin{equation}\label{pos:pi}
			\Pi_A^B=\rho^{-1}\delta^B_A+O(\rho^{-2}),
		\end{equation}
		we compute that
		\begin{eqnarray*}
			\Lambda_{S^2_\rho(a)} & = & \frac{1}{\sqrt{\det h}}\partial_B\left(\rho^{-1}\sqrt{\det \delta_\rho}\,\delta_\rho^{BC}\partial_C+O(\rho^{-1})\right)\\
			& = & \frac{1}{\rho}\Delta_{\rho}+O(\rho^{-4}).
		\end{eqnarray*}
		On the other hand, from (\ref{mc:huang:f}) and (\ref{gk:huang:f}),
		\[
		H_{\rho,a}K_{\rho,a}=\frac{2}{\rho^3}+O(\rho^{-4}). 
		\]
		Also, from (\ref{ricc:norm}), (\ref{pos:pi}) and the fact that 
		\begin{equation}\label{Riem:nor}
			{\rm Riem}_g^\nu=\frac{1}{2}{\rm Ric}_g(\nu,\nu)h+O(\rho^{-4}),
		\end{equation} 
		we see that 
		\begin{equation}\label{be:improved}
			\frac{1}{2}(\nabla_\nu{\rm Ric}_g)(\nu,\nu)-{\rm tr}_h(\Pi{\rm Riem}_g^\nu)=O(\rho^{-4}).
		\end{equation}
		The result follows. 
	\end{proof}
	
	We now have at our disposal the ingredients needed to prove the existence of a foliation satisfying (\ref{curcond}) in Theorem \ref{fol:curv:cond}. Indeed, {from (\ref{gk:huang:f}) and (\ref{exp:K})} we see that finding $\phi$ so that 
	\begin{equation}\label{const:gk}
		\widetilde K_\rho(a,\rho^{-\theta}\phi)=\frac{1}{\rho^2}-\frac{3m}{\rho^3}
	\end{equation}
	is equivalent to solving 
	\begin{equation}\label{eq:phi:R}
		2\rho^{-\theta}\Delta_{(\rho)}\phi(x)
		=\frac{26m^2-3\gamma_2}{\rho^3}+\frac{12m}{\rho^4}x\cdot a-\frac{3\gamma_1}{\rho^4}x\cdot c
		+G_{\rho, a}(x)+E_{\rho,\phi}(x),
	\end{equation}
	where $\Delta_{(\rho)}=\Delta_\rho+{2}{\rho^{-2}}$ and the remainder term $E_{\rho,\phi}$ is controlled as
	\begin{equation}\label{control:E}
		E_{\rho,\phi}(x)=O\left(\rho^{-3-\theta}|\phi|+\rho^{-3-2\theta}|\phi|^2
		+\rho^{-1-2\theta}|\phi||\partial^2\phi|\right)+O(\rho^{-3-\epsilon}). 
	\end{equation}
	
	We next pull back this equation under the map $F:S^2_1(\vec 0)\to S^2_\rho(a)$, $F(\mathfrak r)=a+\rho \mathfrak r$, so as to obtain an equation for $\psi=F^*\phi$ on $S^2_1(\vec 0)$:
	\begin{equation}\label{eq:phi:1:a}
		2\Delta_{(1)}\psi(\mathfrak r)
		=\mathscr F(\mathfrak r,a,\psi)
	\end{equation}
	where $\Delta_{(1)}=\Delta_1+{2}$ and
	\begin{align*}
		\mathscr F(\mathfrak r,a,\psi)=
		&\frac{26m^2-3\gamma_2}{\rho^{1-\theta}}
		+\frac{12m}{\rho^{1-\theta}}\mathfrak r\cdot a
		-\frac{3\gamma_1}{\rho^{1-\theta}}\mathfrak r\cdot c
		+\rho^{2+\theta}F^*G_{\rho,a}(\mathfrak r)
		\\
		&+\rho^{2+\theta}F^*E_{\rho,\phi}(\mathfrak r)
		+12m \rho^{-2+\theta}|a|^2
		-3\gamma_1\rho^{-2+\theta}a\cdot c.
	\end{align*}
	
	We note that the primary obstruction to solving (\ref{eq:phi:1:a}) is the fact that the operator 
	$$\Delta_{(1)}:C^{2,\alpha}(S^2_1(\vec 0))\to C^\alpha(S^2_1(\vec 0))$$ 
	has a nontrivial cokernel generated by the functions $\mathfrak{r}_i$, $i=1,2,3$. Thus, in order to remove this obstruction, we shall calculate
	$$
	\int_{S^2_1(\vec 0)}\mathfrak r_i\mathscr F(\mathfrak r, a,\psi) \,dS^{2,\delta}_1(\vec 0).
	$$
	Note that by symmetry
	$$
	\int_{S_1(\vec 0)}\mathfrak r_i
	\left(
	\frac{26m^2-3\gamma_2}{\rho^{1-\theta}}
	+12m \rho^{-2+\theta}|a|^2
	-3\gamma_1\rho^{-2+\theta}a\cdot c\right)
	dS^{2,\delta}_1(\vec 0)=0.
	$$
	Using (\ref{mc:huang:mean2}) we easily see that 
	\begin{align*}
		\int_{S^2_1(\vec 0)}&\mathfrak{r}_i\left(\frac{12m}{\rho^{1-\theta}} \mathfrak{r}\cdot a
		-\frac{3\gamma_1}{\rho^{1-\theta}} \mathfrak{r}\cdot c+\rho^{2+\theta}F^*G_{\rho,a}(\mathfrak r)\right)dS^{2,\delta}_1(\vec 0)
		\\
		& =  \frac{16\pi m}{\rho^{1-\theta}}\left(a_i-\frac{\gamma_1}{4m}c_i-\frac{\mathcal C_i}{2}\right)+O(\rho^{\theta-2})
		\\
		& = \frac{16\pi m}{\rho^{1-\theta}}\left(a_i-\frac{\gamma_1}{2m}c_i\right)+O(\rho^{\theta-2}),
	\end{align*}
	where in the last step we used that $\mathcal C=\frac{\gamma_1}{2m}c$, according to Remark \ref{asym:req}.
	The remaining integral 
	$$
	\int_{S_1(\vec 0)}\mathfrak r_i
	\rho^{2+\theta}F^*E_{\rho,\phi}(\mathfrak r)
	dS^{2,\delta}_1(\vec 0)
	$$
	may be estimated so as to yield
	\begin{equation}\label{remain:int}
		\int_{S^2_1(\vec 0)}\mathfrak r_i\mathscr F(\mathfrak r,a,\psi) dS^{2,\delta}_1(\vec 0)=
		\frac{16\pi m}{\rho^{1-\theta}}\left(a_i-\frac{\gamma_1}{2m}c_i\right)+\frac{1}{\rho^{1-\theta}}\widehat E_i,
	\end{equation}
	where
	\begin{equation}\label{remainder}
		\widehat E_i=O((\rho^{-1}+\rho^{-\theta})\|\psi\|_{C^2}){+O(\rho^{-\epsilon})}.
	\end{equation}
	Thus, for each $\rho$ large enough we may choose $a_\rho\in\mathbb R^3$ such that 
	\begin{equation}\label{choose:a}
		(a_\rho)_i=\frac{\gamma_1}{2m}c_i-\frac{1}{16\pi m}\widehat E_i,
	\end{equation}
	so as to have
	\begin{equation}\label{elim:obs}
		\int_{S^2_1(\vec 0)}\mathfrak r_i\mathscr F(\mathfrak r, a_\rho,\psi) \,dS^{2,\delta}_1(\vec 0)=0,\qquad i=1,2,3,
	\end{equation}
	for any $\psi$ with $\|\psi\|_{C^2}$ bounded.
	
	With the obstruction so removed we may now use a standard fixed point argument to check that (\ref{eq:phi:R}) has a {\em unique} solution $\phi_\rho$ for all such $\rho$. 
	More precisely,  from (\ref{elim:obs}) we see that $\mathscr F(\mathfrak r,a_\rho,\psi)$ lies in ${\rm Ran}\,\Delta_{(1)}$ if $\|\psi\|_{C^{2,\alpha}}\leq 1$. Therefore, we may uniquely solve 
	\begin{equation}\label{prop:fix:pt}
		2\Delta_{(1)}\widetilde \psi=\mathscr F(\mathfrak r,a_\rho,\psi),
	\end{equation}
	for $\widetilde \psi\in C^{2,\alpha}(S^2_1(\vec 0))\cap (\ker \Delta_{(1)})^\perp$ satisfying
	\begin{equation}\label{estim:theta}
		\|\widetilde \psi\|_{C^{2,\alpha}}\leq C\|\mathscr F(\mathfrak r,a_\rho,\psi)\|_{C^{0,\alpha}}\leq C'\rho^{\max\{-\theta,\theta-1,\theta-1-\epsilon\}}=C'\rho^{\max\{-\theta,\theta-1\}},
	\end{equation}
	and this is $\leq 1$ if $\rho$ is large enough. Thus, the map $\psi\mapsto \widetilde\psi$ has a fixed point which yields a solution $\psi_\rho$ of (\ref{eq:phi:1:a}) and hence a solution $\phi_\rho$ of (\ref{eq:phi:R}). 
	In particular, the graphical surface  associated to $\phi_\rho$, denoted simply 
	$S^2_\rho(\phi_\rho)$, has constant modified Gauss-Kroneker curvature given by the right-hand side of (\ref{const:gk}).
	Moreover, if we choose $\theta=1/2$ in (\ref{estim:theta}) then this analysis guarantees that $\phi_\rho\in C^{2,\alpha}(S^2_\rho(a_\rho))$ satisfies 
	\begin{equation}\label{schauder}
		\sum_{|I|\leq 2}\rho^{|I|}|\partial_I\phi_\rho|+\sum_{|I|= 2}\rho^{2+\alpha}[\partial_I\phi_\rho]_\alpha\leq C''\rho^{1/2}, 
	\end{equation}
	so the corresponding graphical surface $S^2_\rho(\phi_\rho)$, which actually involves the function $\rho^{-1/2}\phi_\rho$,  remains at a fixed distance of $S^2_\rho(a_\rho)$ while becoming rounder as $\rho\to+\infty$.  
	The geometric center of mass of the foliation is given by 
	\[ 
	\mathcal C_{\mathcal F}=\lim_{\rho\to+\infty}\frac{\int_{S^2_\rho(\phi_\rho)}z\,dS^{2,\delta}_\rho(\phi_\rho)}{\int_{S^{2}_\rho(\phi_\rho)}dS^{2,\delta}_\rho(\phi_\rho)}, 
	\]
	where $z$ is the position vector. We easily see from (\ref{remainder}), (\ref{choose:a}) and (\ref{schauder}) that either 
	$\mathcal C_{\mathcal F}$  remains at a finite distance from $\mathcal C$ if $\epsilon=0$ or there holds $\mathcal C_{\mathcal F}=\mathcal C$ if $\epsilon>0$ (recall that $\mathcal C=\frac{\gamma_1}{2m}c$). 
	Also, each such surface  may be viewed as a graph over $S^2_{\rho}(c)$. For simplicity of notation, we still denote such a surface by $S^{2}_\rho(\phi_\rho)$.
	Finally, for further use we note that by (\ref{schauder}) we may determine the asymptotic expansions of the geometric invariants of $S^2_\rho(\phi_\rho)$. The result is 
	\begin{equation}\label{est:asym:gr}
		\left\{
		\begin{array}{rcl}
			K & = & \rho^{-2} -4m\rho^{-3}+O(\rho^{-4}),\\
			H & = & {2}{\rho^{-1}}-{4m}{\rho^{-2}}+O(\rho^{-3}),\\
			|W|^2 & = & {2}{\rho^{-2}}-{8m}{\rho^{-3}}+O(\rho^{-4}),\\
			{\rm Ric}(\nu,\nu) & = & -{2m}{\rho^{-3}}+O(\rho^{-4}),\\
			K_G & = & {\rho^{-2}}- {2m}{\rho^{-3}}+O(\rho^{-4}).
		\end{array}
		\right.
	\end{equation}
	Here, $K_G$ is the Gaussian curvature.

	It remains to check that for $\rho_0$ large enough, the family of surfaces $S^2_\rho(\phi_\rho)$, $\rho\geq \rho_0$, defines a foliation whose leaves are strictly stable in the appropriate sense. We first tackle the stability issue.  
	
	\begin{theorem}\label{stab:fol:th}
		If $m>0$ then $S^2_\rho(\phi_\rho)$ is strictly stable for all $\rho$ large enough.
	\end{theorem}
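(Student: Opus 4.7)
The plan is to perform a spectral analysis of the Jacobi operator $L = L_{S^2_\rho(\phi_\rho)}$ given by (\ref{stab:op:gk}), restricted to the admissible subspace $\{\phi : \int \phi\, dS = 0\}$ determined by the volume constraint underlying (\ref{curcond}), and to show that its first admissible eigenvalue is strictly positive of order $m/\rho^4$ for all sufficiently large $\rho$.

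First, combining (\ref{stab:op:gk}) with the asymptotic expansions (\ref{est:asym:gr}) and the graphical decay bound (\ref{schauder}), I would derive an expansion
\[
L \;=\; -\frac{1}{\rho}\Bigl(\Delta_\rho + \frac{2}{\rho^2}\Bigr) \,+\, R,
\]
in which the principal part is exactly the operator isolated in Proposition \ref{stabop:asymp} and the correction $R$ has dominant piece a multiplication operator of the form $\lambda m/\rho^4$ for a universal constant $\lambda > 0$. This $\lambda m/\rho^4$ term should emerge from combining the $-4m/\rho^3$ correction to $|W|^2$, the $-2m/\rho^3$ piece of $\mathrm{Ric}_g(\nu,\nu)$, and the $-2m/\rho^3$ correction to $K$ recorded in (\ref{est:asym:gr}), once these are inserted into (\ref{stab:op:gk}).

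Second, I would decompose via (approximate) spherical harmonics on the nearly round sphere $S^2_\rho(\phi_\rho)$. The leading operator $-(\Delta_\rho + 2/\rho^2)/\rho$ has spectrum $(\ell-1)(\ell+2)/\rho^3$, $\ell \geq 0$, with eigenspaces close to those of $S^2_\rho$ thanks to (\ref{schauder}). Modes with $\ell \geq 2$ contribute eigenvalues $\geq 4/\rho^3$, which easily absorb $R$ for large $\rho$. The $\ell = 0$ mode is killed by the volume constraint. The delicate case is $\ell = 1$, whose leading eigenvalue is zero and whose three-dimensional eigenspace consists of translational modes (spanned by the restrictions of the coordinate functions of $\mathbb R^3$). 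On this subspace, the multiplicative correction $\lambda m/\rho^4$ produces strictly positive eigenvalues of order $m/\rho^4$ precisely when $m > 0$, which is the hypothesis of the theorem.

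The main obstacle is the precise computation of $\lambda$ and verification that it is positive. This demands systematic bookkeeping of every $O(\rho^{-4})$ contribution in the expansion of (\ref{stab:op:gk}) at the leaf $S^2_\rho(\phi_\rho)$, together with a use of (\ref{schauder}) to confirm that the graphical perturbation $\rho^{-1/2}\phi_\rho$ introduces no additional multiplicative term at this order. The reasoning closely parallels the CMC stability computations in \cite{huisken1996definition,huang2008center,huang2009center}; the novelty here is that the Jacobi operator comes from the second variation of $\widetilde K$ rather than of $H$ itself, but the mechanism—positivity of the mass breaking the translational degeneracy of the leading operator—is identical.
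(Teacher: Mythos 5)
Your overall strategy (restrict to mean-zero functions, dismiss the $\ell\ge 2$ modes as safely positive, and show that positivity of $m$ breaks the degeneracy of the translational $\ell=1$ modes at order $m/\rho^{4}$) is viable and is essentially the Huisken--Yau spectral route. But there is a concrete gap at exactly the step you defer: your description of the correction $R$ as having ``dominant piece a multiplication operator $\lambda m/\rho^{4}$'' built from the curvature expansions is not accurate, and bookkeeping from that starting point would miscount the critical order. The Jacobi operator (\ref{stab:op:gk}) is in divergence form, $L_S=-\,{\rm div}_S(\Pi\nabla_S\cdot)+(\text{zeroth order})$, and the first-order (in $m$) corrections to the \emph{principal} part --- namely $\Pi=(\rho^{-1}-2m\rho^{-2})I+O(\rho^{-3})$ as in (\ref{exp:Pi}), together with the conformal factor $1+2m/\rho$ of the induced metric --- contribute on the $\ell=1$ modes at the same order $m/\rho^{4}$ as the zeroth-order terms, and with the opposite sign: on translational modes the gradient term yields $2/\rho^{3}-8m/\rho^{4}$, while the multiplicative part $-HK-\tfrac12(\nabla_\nu{\rm Ric}_g)(\nu,\nu)-{\rm tr}_S(\Pi\,{\rm Riem}_g^\nu)$ gives $-2/\rho^{3}+11m/\rho^{4}$ (note also that $|W|^{2}$, which you list among the ingredients, does not occur in (\ref{stab:op:gk}); the relevant product is $HK$). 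The net margin is only $3m/\rho^{4}$, so positivity survives a genuine cancellation between terms your plan does not account for; since you explicitly leave this computation as ``the main obstacle,'' the argument is incomplete at its decisive point.

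For comparison, the paper avoids the mode decomposition altogether: it estimates the quadratic form of Proposition \ref{stable:cond} directly, bounding $\int_S\langle\Pi\nabla_Sf,\nabla_Sf\rangle\,dS$ from below by the Lichnerowicz eigenvalue inequality $\int_S|\nabla_Sf|^{2}dS\ge 2\inf K_G\int_Sf^{2}dS$ for mean-zero $f$, with $\inf K_G\ge\rho^{-2}-2m\rho^{-3}-C\rho^{-4}$ and $\Pi\ge(\rho^{-1}-2m\rho^{-2}-C\rho^{-3})I$ from (\ref{est:asym:gr}) and (\ref{exp:Pi}), and then adds the zeroth-order expansion $-2/\rho^{3}+11m/\rho^{4}$. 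Since the Lichnerowicz bound is sharp precisely on first spherical harmonics, this handles your delicate $\ell=1$ modes automatically, requires no perturbation theory for approximate eigenspaces, and yields $V(f)\ge(3m/\rho^{4}-C\rho^{-5})\int_Sf^{2}dS$ in a few lines. If you want to keep your spectral approach, you must carry out the full $O(\rho^{-4})$ expansion of both the divergence-form and zeroth-order parts of (\ref{stab:op:gk}) and exhibit the $3m/\rho^{4}$ remainder explicitly.
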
 
	
	\begin{proof}
		According to Proposition \ref{stable:cond}, we must estimate from below the quadratic form 
		\[
		V(f)=
		\int_S\left( \langle\Pi\nabla_S f,\nabla_S f\rangle-f^2\left( HK+\frac{1}{2}(\nabla_\nu{\rm Ric}_g)(\nu,\nu)+{\rm tr}_S(\Pi{\rm Riem}_g^\nu)\right) \right)dS, 
		\]
		where $f\in \mathcal G(S)$ and here we set $S=S^2_\rho(\phi_\rho)$ for simplicity.
		From (\ref{est:asym:gr}) we have 
		\[
		-HK=-\frac{2}{\rho^3}+\frac{12m}{\rho^4}+O(\rho^{-5}).
		\]
		Also, from   (\ref{ricc:norm}), (\ref{pos:pi}) and (\ref{Riem:nor}), we may improve (\ref{be:improved}) to
		\[
		-\frac{1}{2}(\nabla_\nu{\rm Ric}_g)(\nu,\nu)-{\rm tr}_S(\Pi{\rm Riem}_g^\nu)={\color{blue}{-}}\frac{m}{\rho^4}+O(\rho^{-5}).
		\]
		Thus, 
		\begin{equation}\label{est:quad:V}
			V(f)\geq 
			\int_S \langle\Pi\nabla_S f,\nabla_S f\rangle dS+
			\left(-\frac{2}{\rho^3}+\frac{11m}{\rho^4}+O(\rho^{-5})\right)\int_S f^2 dS. 
		\end{equation}
		We now observe that  the Newton tensor of $S=S^2_\rho(\phi_\rho)$ satisfies
		\begin{equation}\label{exp:Pi}
			\Pi=\left(\frac{1}{\rho}-\frac{2m}{\rho^2}\right)I+O(\rho^{-3}). 
		\end{equation}
		On the other hand, by the well-known Lichnerowicz eigenvalue bound, 
		\[
		\int_S|\nabla_Sf|^2dS\geq 2\inf K_G\int_Sf^2dS, \quad f\in\mathcal G(S).
		\]
		where by (\ref{est:asym:gr}), 
		\[
		\inf K_G\geq \frac{1}{\rho^2}-\frac{2m}{\rho^3}-C\rho^{-4}, \quad C>0.
		\]
		We thus conclude that  
		\[
		V(f)\geq \left(\frac{3m}{\rho^4}-C\rho^{-5}\right)\int_Sf^2dS,
		\]
		and the result follows.
	\end{proof}
	
	We now check that the surfaces define a foliation. 
	Since the argument, as explained for instance in \cite{huang2009center}, is well-known by now and it  may be easily adapted to our setting, here we merely sketch the proof. 
	
	\begin{proposition}\label{unconst:eig}
		Let $\zeta_0<\zeta_1$ be the first two (unconstrained) eigenvalues of $L_{S_\rho^2(\phi_\rho)}$. Then 
		\[
		\zeta_0=-\frac{2}{\rho^3}+\frac{9m}{\rho^4}+O(\rho^{-5}), \quad \zeta_1\geq \frac{m}{\rho^4}-C\rho^{-5}. 
		\]
		In particular, $L_{S_\rho^2(\phi_\rho)}:C^{2,\alpha}(S_\rho^2(\phi_\rho))\to C^{\alpha}(S^2_\rho(\phi_\rho))$ is invertible for all $\rho$ large enough. 
	\end{proposition}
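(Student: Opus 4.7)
The plan is to combine the asymptotic expansion of $L_{S^2_\rho(\phi_\rho)}$ established in Proposition \ref{stabop:asymp} with the variational (min--max) characterization of eigenvalues, reusing the quantitative estimates already assembled in the proof of Theorem \ref{stab:fol:th}. The Schauder bounds (\ref{schauder}) guarantee that passing from the round $S^2_\rho(a_\rho)$ to the graphical surface $S^2_\rho(\phi_\rho)$ alters the Jacobi operator only by $O(\rho^{-5})$ terms, so the operator retains the structure
\[
L_{S^2_\rho(\phi_\rho)}f = -\mathrm{div}_S(\Pi\nabla_S f) - Q_\rho f,
\]
with $Q_\rho = HK + \tfrac12(\nabla_\nu\mathrm{Ric}_g)(\nu,\nu) + \mathrm{tr}_S(\Pi\,\mathrm{Riem}_g^\nu)$, whose expansion on $S=S^2_\rho(\phi_\rho)$ is controlled by (\ref{est:asym:gr}), (\ref{ricc:norm}), (\ref{exp:Pi}), and (\ref{Riem:nor}).

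To estimate $\zeta_0$, I would argue via first-order perturbation theory around the dominant model $L_0 = -\frac{1}{\rho}(\Delta_\rho + \frac{2}{\rho^2})$, whose smallest eigenvalue is $-2/\rho^3$ with constant eigenfunction $f_0\equiv 1$ and whose spectral gap is of order $\rho^{-3}$. Evaluating the Rayleigh quotient of $L_{S^2_\rho(\phi_\rho)}$ at $f_0=1$ against the expansion of $Q_\rho$ recorded in Theorem \ref{stab:fol:th} immediately yields the leading behavior $-2/\rho^3 + O(\rho^{-4})$; the precise subleading coefficient $9m$ is then pinned down by extracting the diagonal matrix element of the next-order piece of $L$ together with the second-order perturbative correction coming from the off-diagonal coupling between the constant mode and the higher Laplacian modes. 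Since $L_{S^2_\rho(\phi_\rho)}$ is formally self-adjoint and the gap separating $f_0$ from the rest of the spectrum is of size $\rho^{-3}$, the matching lower bound $\zeta_0 \geq -2/\rho^3 + 9m/\rho^4 + O(\rho^{-5})$ follows from the same perturbative identity.

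For $\zeta_1$, the Courant min--max formula identifies $\zeta_1$ with the infimum of the Rayleigh quotient over functions orthogonal to the first eigenfunction, which to leading order is simply the constant function; thus $\zeta_1$ is essentially the infimum of the constrained stability form $V$ already examined in the proof of Theorem \ref{stab:fol:th}, where the admissible functions in $\mathcal G(S)$ are mean-zero. The Lichnerowicz lower bound $\lambda_1(\Delta_S) \geq 2\inf K_G \geq 2/\rho^2 - 4m/\rho^3 - C\rho^{-4}$ coming from (\ref{est:asym:gr}), combined with the expansion (\ref{exp:Pi}) of the Newton tensor $\Pi$, produces a gradient contribution whose leading $2/\rho^3$ part exactly cancels the $-2/\rho^3$ potential, leaving the residual positive estimate $V(f) \geq (3m/\rho^4 - C\rho^{-5})\int_S f^2\,dS$ derived in Theorem \ref{stab:fol:th}; this yields $\zeta_1 \geq m/\rho^4 - C\rho^{-5}$ immediately (any loss from the projection onto exactly the true first eigenspace, rather than the constants, is absorbed into the error term).

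Invertibility of $L_{S^2_\rho(\phi_\rho)} : C^{2,\alpha}(S^2_\rho(\phi_\rho)) \to C^\alpha(S^2_\rho(\phi_\rho))$ then follows at once: for $\rho$ large the two bounds force $\zeta_0 < 0 < \zeta_1$, so $0$ is not in the spectrum of this formally self-adjoint elliptic operator, and the Fredholm alternative combined with standard Schauder theory delivers a linear homeomorphism. The main obstacle I anticipate is the careful bookkeeping of the $\rho^{-4}$ coefficient in the expansion of $\zeta_0$, where a naive constant-function Rayleigh quotient overshoots the correct value and one must track the leading perturbative correction to the eigenfunction away from a constant; a parallel subtlety, though less delicate, occurs when verifying that the first eigenfunction of $L_{S^2_\rho(\phi_\rho)}$ is indeed $L^2$-close to the constant to the order required for the $\zeta_1$ estimate to come out with the stated constant.
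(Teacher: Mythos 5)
Your overall strategy is the same as the one the paper intends: the paper's own ``proof'' of this proposition is a one-line remark that it is a refinement of the stability analysis in Theorem \ref{stab:fol:th}, and your plan (reuse the expansions (\ref{est:asym:gr}), (\ref{ricc:norm}), (\ref{exp:Pi}), (\ref{Riem:nor}), evaluate Rayleigh quotients, use the Lichnerowicz bound for the modes orthogonal to the lowest one, and conclude invertibility from $\zeta_0<0<\zeta_1$ plus ellipticity of $L_S$ and Schauder theory) is exactly that refinement. The $\zeta_1$ part and the invertibility conclusion are sound: the true first eigenfunction is $L^2$-close to the constant (the non-constant part of the zeroth-order coefficient of $L_S$ is $O(\rho^{-5})$ while the spectral gap is of order $\rho^{-3}$), so the min--max infimum over its orthogonal complement differs from the infimum of $V$ over mean-zero functions only by terms that are negligible at the stated order, and the bound $V(f)\geq(3m\rho^{-4}-C\rho^{-5})\int_S f^2dS$ from Theorem \ref{stab:fol:th} then gives $\zeta_1\geq m\rho^{-4}-C\rho^{-5}$ with room to spare.

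The genuine gap is in your treatment of the $\rho^{-4}$ coefficient of $\zeta_0$. You propose to recover the stated value $9m$ by adding to the constant-function Rayleigh quotient ``the second-order perturbative correction coming from the off-diagonal coupling between the constant mode and the higher Laplacian modes.'' This mechanism cannot work quantitatively: since $L_S 1$ equals the zeroth-order coefficient $-\bigl(HK+\tfrac12(\nabla_\nu{\rm Ric}_g)(\nu,\nu)+{\rm tr}_S(\Pi\,{\rm Riem}_g^\nu)\bigr)$, whose angular (non-constant) part is $O(\rho^{-5})$ by (\ref{est:asym:gr}) and (\ref{be:improved}), the coupling of the constant mode to the higher modes is $O(\rho^{-5})$, and with a gap of order $2\rho^{-3}$ the resulting eigenvalue shift is $O(\rho^{-7})$ --- far too small to alter the $\rho^{-4}$ coefficient. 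Consequently $\zeta_0$ coincides, up to $O(\rho^{-5})$, with the constant-function value, which from the paper's own displayed expansions ($-HK=-2\rho^{-3}+12m\rho^{-4}+O(\rho^{-5})$ together with the improved (\ref{be:improved}) giving $-m\rho^{-4}$) is $-2\rho^{-3}+11m\rho^{-4}+O(\rho^{-5})$, not $-2\rho^{-3}+9m\rho^{-4}$. So either the stated coefficient $9m$ must be obtained by a sharper computation of the potential term itself on the leaf (i.e.\ the $\rho^{-4}$ coefficients of $HK$ and of the curvature terms, possibly revisiting (\ref{est:asym:gr})), or it is a slip in the statement; in no case can eigenfunction corrections bridge a discrepancy at order $\rho^{-4}$, so this step of your plan would fail as written. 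Note, however, that only the signs $\zeta_0<0<\zeta_1$ (and the size of the gap) are used downstream, so the application to invertibility and to the foliation argument is unaffected. A minor further caveat: your claim that passing from $S^2_\rho(a_\rho)$ to the graph changes the Jacobi operator only by $O(\rho^{-5})$ is not accurate for the principal part (the normal displacement is only bounded, not small), but it is also unnecessary, since (\ref{est:asym:gr}) and (\ref{exp:Pi}) are stated directly on $S^2_\rho(\phi_\rho)$.
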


	The proof of this statement is basically a refinement of the stability analysis above. 
	In any case, for any such fixed $\rho_0$  large enough, it implies the existence of a {\em unique} $f_{\rho_0}\in C^{2,\alpha}(S_{\rho_0}^2(\phi_{\rho_0}))$ such that $L_{S_{\rho_0}^2(\phi_{\rho_0})}f_{\rho_0}=1$. 
	
	\begin{proposition}\label{nonvan}
		The function $f_{\rho_0}$ vanishes nowhere on $S^2_{\rho_0}(\phi_{\rho_0})$. 
	\end{proposition}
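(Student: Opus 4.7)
My plan is to show that $f_{\rho_0}$ is uniformly close to the negative constant $\bar f := -\rho_0^3/2$ with pointwise deviation of strictly smaller order than $\rho_0^3$, so that $f_{\rho_0}<0$ everywhere on $S^2_{\rho_0}(\phi_{\rho_0})$ for all sufficiently large $\rho_0$. Using the expansions in (\ref{est:asym:gr}) together with Proposition \ref{stabop:asymp}, a direct computation gives that $L := L_{S_{\rho_0}^2(\phi_{\rho_0})}$ acts on the constant function $1$ by
\[
L(1) = -\frac{2}{\rho_0^3} + \frac{11m}{\rho_0^4} + O(\rho_0^{-5}),
\]
so $L\bar f - 1 = O(\rho_0^{-1})$ in $C^0$. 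Setting $w := f_{\rho_0} - \bar f$, the equation $L f_{\rho_0} = 1$ becomes $L w = g$ with $\|g\|_{C^0} = O(\rho_0^{-1})$; invertibility of $L$ from Proposition \ref{unconst:eig} determines $w$ uniquely, so the remaining task is the quantitative bound $\|w\|_{C^0} = o(\rho_0^3)$.

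To establish this bound I would exploit the spectral structure of $L$ afforded by Proposition \ref{unconst:eig}, the eigenvalues organizing themselves into three scales: the principal eigenvalue $\zeta_0 \approx -2/\rho_0^3$; three small positive eigenvalues $\zeta_1,\zeta_2,\zeta_3 \gtrsim m/\rho_0^4$ arising from perturbations of the degree-$1$ spherical harmonics on the model sphere; and all remaining eigenvalues bounded below by $\sim 1/\rho_0^3$. Let $\varphi_0>0$ denote the Krein--Rutman first eigenfunction (normalized in $L^2$), and decompose $w = c\varphi_0 + w^{(1)} + w^{(\geq 2)}$ along these three subspaces. Projecting $Lw = g$ onto $\varphi_0$ gives $c = \langle g,\varphi_0\rangle/\zeta_0$; since $\|g\|_{L^2}\leq \|g\|_{C^0}\sqrt{A} = O(1)$ and $|\zeta_0|\sim \rho_0^{-3}$, we have $|c|=O(\rho_0^3)$, and using $\|\varphi_0\|_{C^0}\sim \rho_0^{-1}$, one gets $\|c\varphi_0\|_{C^0} = O(\rho_0^2)$. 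On the high-eigenvalue subspace, coercivity of $L$ yields, via Schauder estimates after rescaling to the unit sphere, $\|w^{(\geq 2)}\|_{C^0} = O(\rho_0^2)$. It remains to control $\|w^{(1)}\|_{C^0}$.

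This last step is the principal obstacle, since the eigenvalues $\sim m/\rho_0^4$ are small enough that a naive spectral bound would give only $\|w^{(1)}\|_{L^2} = O(\rho_0^4/m)$, which after the crudest $L^2$-to-$C^0$ conversion sits precariously at the same order as $|\bar f|$. The resolution exploits the specific structure of $g$: its leading part $\bar f\cdot(11m/\rho_0^4) = -11m/(2\rho_0)$ is a pure constant, orthogonal to the $\ell=1$ subspace, while the nonconstant piece of $g$ is of order $\rho_0^{-2}$ in $C^0$. Thus $\|P_1 g\|_{L^2} = O(\rho_0^{-1})$, and applying $\zeta_1^{-1}\sim \rho_0^4/m$ gives $\|w^{(1)}\|_{L^2} = O(\rho_0^3/m)$; converting to $C^0$ through the orthonormal expansion of $w^{(1)}$ in the three $\ell=1$ eigenfunctions (each of $C^0$-size $\sim \rho_0^{-1}$) yields $\|w^{(1)}\|_{C^0} = O(\rho_0^2/m)$. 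Should any residual $\ell=1$ obstruction persist, one may refine $\bar f$ by adding a linear combination of the approximate $\ell=1$ modes $x_i/\rho_0$ with subleading coefficients—irrelevant for sign-definiteness but sufficient to annihilate that projection. All three contributions being $O(\rho_0^2)$ for fixed $m>0$, we conclude $\|w\|_{C^0} = O(\rho_0^2) = o(\rho_0^3)$, and consequently $f_{\rho_0} = \bar f + w < 0$ throughout $S^2_{\rho_0}(\phi_{\rho_0})$, as claimed.
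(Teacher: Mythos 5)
Your strategy --- take the explicit approximate solution $\bar f=-\rho_0^3/2$ suggested by $L(1)=-2/\rho_0^3+11m/\rho_0^4+O(\rho_0^{-5})$ and control the correction $w$ in $C^0$ by decomposing along the spectrum of $L$ --- is genuinely different from the paper's proof, which obtains the same quantitative conclusion in one stroke from the oscillation estimate $\sup|f_{\rho_0}-\overline{f_{\rho_0}}|\leq C\rho_0^{-1}|\overline{f_{\rho_0}}|$, proved by Nash--Moser iteration (as in Huang's work) using only that $L$ is elliptic of divergence type. Your route is viable in principle, but as written it has a real gap precisely at the step you call the principal obstacle. The ``$\ell=1$ subspace'' in your decomposition must be the span of the actual eigenfunctions $\psi_1,\psi_2,\psi_3$ of the perturbed self-adjoint operator $L$, taken with respect to the induced measure $dS$, and the constant function is \emph{not} orthogonal to that span: it is orthogonal only to the true first eigenfunction $\varphi_0$, which differs from the constant. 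So $\|P_1 g\|_{L^2}=O(\rho_0^{-1})$ does not follow from parity of spherical harmonics; it requires a quantitative eigenfunction-perturbation estimate (e.g. $\|\varphi_0-A^{-1/2}\|_{L^2}=O(\rho_0^{-1})$, equivalently that the $\ell=1$ eigenfunctions are $L^2$-close to the normalized linear functions), which in turn needs the refined spectral picture you assume --- a simple bottom eigenvalue near $-2/\rho_0^3$, exactly three eigenvalues of size $\sim m\rho_0^{-4}$, and everything else bounded below by $\sim\rho_0^{-3}$. None of this is contained in Proposition \ref{unconst:eig}, which only gives $\zeta_0$ and a lower bound on $\zeta_1$. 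Without the improved projection bound, all you have is $|\langle g,\psi_j\rangle|\leq\|g\|_{L^2}=O(1)$, whence $\|w^{(1)}\|_{C^0}=O(\rho_0^3/m)$, the same order as $|\bar f|$, and sign-definiteness is lost.

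Moreover, your fallback --- absorbing a residual $\ell=1$ projection into a modified $\bar f$ --- is circular: the correction you would have to add has size $|\langle g,\psi_j\rangle|/\zeta_j$ in $L^2$, so showing it is harmless for sign-definiteness is exactly the statement $\langle g,\psi_j\rangle=o(1)$ that you were trying to sidestep. The needed smallness is in fact true and can be proved by standard perturbation theory (the spectral gaps of order $\rho_0^{-3}$ against the $O(\rho_0^{-4})$ deviation of $L$ from the model operator in Proposition \ref{stabop:asymp} give $O(\rho_0^{-1})$ closeness of eigenfunctions), but it must be carried out, and so must the $L^2$-to-$C^0$ conversions: $\|\varphi_0\|_{C^0}\lesssim\rho_0^{-1}$ needs an elliptic sup bound for the eigenfunction, and for $w^{(\geq2)}$ the right tool is the $L^2$ elliptic estimate plus $H^2\hookrightarrow C^0$ on the rescaled sphere rather than Schauder, since the spectral projection does not control H\"older norms of $P_{\geq2}g$ with uniform constants. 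By contrast, the paper's argument never resolves the near-kernel at all, which is exactly what makes its proof a two-line reduction to the Moser-iteration estimate.
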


	\begin{proof}
		This  follows immediately from the estimate
		\[
		\sup_{S_{\rho_0}^2(\phi_{\rho_0})} |f_{\rho_0}-\overline{f_{\rho_0}}|\leq C\rho_{0}^{-1}|\overline{f_{\rho_0}}|,
		\]
		where $C>0$ is a constant depending only $g$ and the overline stands for the average over $S_{\rho_0}^2(\phi_{\rho_0})$. The method of proof, which is explained in \cite[Section 5.2]{huang2009center}, makes use of Nash-Moser iteration and equally applies here due to the fact that $L_{S_{\rho_0}^2(\phi_{\rho_0})}$ is elliptic of divergence type; see Appendix \ref{var:setup}.
	\end{proof}
	
	Now, we may organize the graphical surfaces $S_{\rho_0}^2(\phi_\rho)$ with $\rho$ close to $\rho_0$ in a smooth deformation 
	\[
	F:(\widetilde K_{\rho_0}-\epsilon,\widetilde K_{\rho_0}+\epsilon)\times S_{\rho_0}^2(\phi_{\rho_0})\to M,\quad \epsilon>0,
	\] 
	where $\widetilde K_{\rho_0}$ is the modified Gauss-Kronecker curvature  of $S_{\rho_0}^2(\phi_{\rho_0})$  and $F(\widetilde K,\cdot)$ has constant modified Gauss-Kronecker curvature equal to $\widetilde K$. Clearly,
	\[
	F(\widetilde K, x)=\exp_{S_{\rho_0}^2(\phi_{\rho_0})}(\widetilde f_{\widetilde K}\nu),
	\]
	for some function $\widetilde f_{\widetilde K}$ on $S_{\rho_0}^2(\phi_{\rho_0})$ with $\widetilde f_{\widetilde K_{\rho_0}}\equiv 0$. 
	Let 
	\[
	\widetilde f_{0}:=\frac{\partial \widetilde f_{\widetilde K}}{\partial \widetilde K}|_{\widetilde K=\widetilde K_{\rho_0}}=\left\langle \frac{\partial F}{\partial \widetilde K}|_{\widetilde K=\widetilde K_{\rho_0}},\nu\right\rangle.
	\]
	By (\ref{varsig2tilde}), 
	\[
	L_{S_{\rho_0}^2(\phi_{\rho_0})}\widetilde f_{0}=\frac{d}{d k}\left(\widetilde K_{\rho_0}+k\right)|_{k=0}=1,
	\]
	so that $\widetilde f_{0}=f_{\rho_0}$ by uniqueness. In particular, $\widetilde f_{0}$ never vanishes and we may use the inverse function theorem to conclude that $F$ is a diffeomorphism onto a small neighborhood of $S_{\rho_0}^2(\phi_{\rho_0})$ in $M$.  
	This shows that the surfaces define a foliation and completes the proof of the first part of Theorem \ref{fol:curv:cond}.  
	
	We now sketch the proof of the existence of a stable foliation satisfying the curvature condition in (\ref{curcond2}), whose leaves 
	correspond to surfaces extremizing the total mean curvature under an area constraint by Appendix \ref{var:setup}.  
	The  Lagrange multiplier $\gamma$ is determined by observing that  (\ref{K:func:h:f}) leads to 
	\[
	\frac{\widetilde K_{\rho,a}}{H_{\rho,a}}=\frac{1}{2\rho}-\frac{m}{2\rho^2}+O(\rho^{-3}).
	\]
	With this choice of $\gamma=O(\rho^{-1})>0$, it then follows from (\ref{exp:Pi}), (\ref{varsig2tilde}) and (\ref{evolmean}) that the quadratic form associated to the linearization of (\ref{curcond2}) at $S=S_\rho(a)$ has 
	\[
	\int_S\langle\left(\Pi-\gamma I\right)\nabla_Sf,\nabla_S f \rangle dS=\left(\frac{1}{2\rho}-\frac{3m}{2\rho^2}+O(\rho^{-3})\right)\int_S|\nabla_Sf|^2dS
	\]
	as its principal part. Hence, the linearization is selfadjoint and elliptic. In fact, a computation shows that the rescaled linearization on $S^2_1(\vec 0)$ is $2(1-\gamma)\Delta_{(1)}$; compare with the left-hand side of (\ref{eq:phi:1:a}). Thus, we are in a position to run the implicit function method above in order to  construct a graphical surface over $S_\rho(a)$ satisfying (\ref{curcond2}). As in the preceding case, this step only uses that $m\neq 0$. If $m>0$ then a further analysis shows that these graphical surfaces comprise a foliation of a neighborhood of infinity whose leaves are strictly stable in the appropriate sense. Moreover, the geometric center of this foliation {remains at a finite distance} from the Hamiltonian center of mass. In this way,
	the proof of  Theorem \ref{fol:curv:cond} is completed.


	\section{The foliation by free boundary CMC hemispheres: the proof of Theorem \ref{free:af:bd}}\label{stab:fol}

	In this section we present the proof of  Theorem \ref{free:af:bd}. 
	{In any ahS manifold as in that theorem we consider the coordinate hemisphere $S^2_{\rho,+}(b)$ centered at some $b\in\mathbb R^2$ and use the notation of Appendix \ref{var:setup}}.  As in Proposition \ref{mc:huang},
	we compute the mean curvature $H_{\rho,+,b}$ of this hemisphere to obtain
	\begin{equation}\label{mc:huang:f:b}
		H_{\rho,+,b}(x)=\frac{2}{\rho}-\frac{4m}{\rho^2}+\frac{9m^2}{\rho^3}+\frac{6m(x-b)\cdot b}{\rho^4}
		+G_{\rho. b}(x)+O(\rho^{-4}),
	\end{equation}
	where $G_{\rho. b}(x)$ is as in (\ref{gra:eq}) with $\mathfrak r=(x-b)/\rho$. We also need the analogue of Proposition \ref{mc:huang:mean}, which goes as follows.
	
	\begin{proposition}\label{mc:huang:mean:b}
		If $(M,g,\Sigma)$ is an ahS manifold meeting the ${\rm RT}^+$ condition then its center of mass $\mathcal C^+$ satisfies 
		\begin{equation}\label{mc:huang:mean:b2}
			\int_{S^2_{\rho,+}(b)}(x_\alpha-b_\alpha)G_{\rho. b}(x) dS^{2,\delta_+}_{\rho,+}(b)=-8\pi \mathfrak m\mathcal C^+_\alpha+O(\rho^{-1}), \quad \alpha=1,2, 
		\end{equation}
		where $dS^{2,\delta_+}_{\rho,+}(b)$ is the area element induced by the flat metric. 
	\end{proposition}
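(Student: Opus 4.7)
The plan is to adapt the elementary argument of Eichmair \cite{eichmair2013unique} to the presence of the non-compact boundary $\Sigma$. The key observation is that, in view of the explicit formula (3.2), $(x_\alpha - b_\alpha) G_{\rho,b}(x)$ is, up to controlled error, the integrand of a divergence-type expression built from the deviation $p = g - (1 + 2m/r)\delta^+$, evaluated against the coordinate function $x_\alpha$. Consequently, the integral over $S^2_{\rho,+}(b)$ should be convertible, by Green-type identities, into bulk terms (involving the scalar curvature and $H_\Sigma$) plus boundary terms on $S^1_\rho(b) \subset \Sigma$, with the structure matching exactly the Hamiltonian center-of-mass formula (\ref{cm:bd}).

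Concretely, I would first translate coordinates by $y = x - b$, so that $\mathfrak r = y/\rho$ on $S^2_{\rho,+}(b)$; since $b \in \partial \mathbb R^3_+$, this preserves the parity structure underlying the ${\rm RT}^+$ conditions, so odd-parity remainders in $y'$ will integrate to zero up to $O(\rho^{-1})$ by (\ref{regge:met:bd}). Next I would rewrite each of the five monomials in (\ref{gra:eq}), weighted by $y_\alpha$, as a contraction against the linearized mass integrand $\mathbb U(y_\alpha, p)(\mathfrak r)$ plus pieces explicitly of size $\rho^{-1}$ after integration. Applying the divergence theorem on the half-shell $\{R_0 \le |y| \le \rho\} \cap \mathbb R^3_+$, the interior contribution reduces to integrals of $\mathsf R_g$ and $H_\Sigma$ against $x_\alpha$, which are $O(\rho^{-1})$ in view of (\ref{scalar:asym}) and (\ref{regge:scal:bd}).

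The resulting boundary pieces originate from three portions of the half-shell boundary: the fixed inner hemisphere $\{|y|=R_0\}$ (whose contribution is a $\rho$-independent constant absorbed into the limit), the outer hemisphere $S^2_{\rho,+}(b)$ (the integral we are computing), and the flat annulus $\{|y'| \le \rho, \ y_3 = 0\}$ lying in $\Sigma$. Integration by parts on this annulus produces a circle integral on $S^1_\rho(b)$ plus a bulk piece again controlled by $H_\Sigma$. Rewriting everything in terms of $e^+ = g - \delta^+ = p + (2m/r)\delta^+$ and collecting, the explicit Schwarzschild term $(2m/r)\delta^+$ yields a closed-form integral whose value, together with the definition (\ref{cm:bd}), accounts exactly for the factor $-8\pi \mathfrak m \mathcal C^+_\alpha$ (using $\mathfrak m = m/2$). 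The main obstacle will be the careful bookkeeping of the boundary integrals on the equatorial circle $S^1_\rho(b)$ and on the portion of $\Sigma$ enclosed, and verifying that their sum reproduces precisely the subtraction $-\int_{S^1_\rho} x_\alpha e^+(x/\rho, \vartheta) dS^1_\rho$ appearing in (\ref{cm:bd}); this is where the ${\rm RT}^+$ decay conditions are indispensable, as without (\ref{regge:met:bd})--(\ref{regge:scal:bd}) the relevant odd-parity contributions would fail to converge as $\rho \to +\infty$.
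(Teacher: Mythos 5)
Your strategy is not the one the paper uses, and as written it leaves the decisive step unproven. The paper's proof (Appendix \ref{dens:res}) never invokes a three-dimensional divergence theorem, the scalar curvature, or $H_\Sigma$: the whole point of the elementary argument of \cite[Appendix F]{eichmair2013unique}, as adapted here, is that $S^2_{\rho,+}(b)$ is a \emph{free boundary CMC surface in the flat half-space} with mean curvature $2/\rho$, so equating (\ref{first:var:area2:0}) with (\ref{first:var:area}) (taking $\mu=-\partial/\partial x_3$) for the particular vector field $Y_{(\alpha)}=(x_\alpha-b_\alpha)\,e^+_{ij}\mathfrak r_i\,\partial/\partial x_j$ gives a purely two-dimensional identity (Proposition \ref{int:part:cmc}) that converts the cubic monomial $\frac12 e^+_{ij,k}\mathfrak r_i\mathfrak r_j\mathfrak r_k$, weighted by $x_\alpha-b_\alpha$, into terms linear in $\mathfrak r$ \emph{plus the precise circle integral} $-\frac12\int_{S^1_\rho(b)}(x_\alpha-b_\alpha)e^+_{i3}\mathfrak r_i$. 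Substituting the expansion of $H_{\rho,+,b}-2/\rho$ from \cite{huang2009center}, the surviving surface and circle integrals are literally the integrands in (\ref{mass:form:bd}) and (\ref{cm:bd}), and the ${\rm RT}^+$/parity hypotheses dispose of the $b$-weighted remainders. In your proposal this mechanism is replaced by the assertion that each monomial of $(x_\alpha-b_\alpha)G_{\rho,b}$ can be rewritten ``as a contraction against $\mathbb U(x_\alpha,p)$ plus pieces of size $\rho^{-1}$ after integration,'' and you yourself defer to ``the main obstacle'' the verification that the boundary bookkeeping reproduces the subtraction $-\int_{S^1_\rho}x_\alpha e^+(x/\rho,\vartheta)\,dS^1_\rho$ in (\ref{cm:bd}). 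That rewriting, with the exact circle correction, \emph{is} the content of the proposition (via Proposition \ref{regge:ads+}); asserting it without the tangential integration by parts on the hemisphere amounts to assuming what is to be proven.

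The bulk route you sketch (closer to Huang's original computation than to the elementary argument you cite) could in principle be pushed through, but it carries additional burdens you do not address: (i) the divergence identity produces $x_\alpha$ against the \emph{linearized} scalar curvature, which differs from $x_\alpha\mathsf R_g$ by a quadratic remainder of size $O(r^{-3})$ after the $x_\alpha$-weighting, whose half-shell integral is a priori logarithmically divergent and must be killed by a parity argument you only gesture at; (ii) the flux through the fixed inner hemisphere $\{|y|=R_0\}$ is a genuine nonzero constant that cannot be ``absorbed into the limit''--- to obtain the stated $O(\rho^{-1})$ error you must instead compare the flux at radius $\rho$ with its limit and estimate the tail of the bulk integral on $\{|y|\ge\rho\}$; and (iii) the integration by parts on the annulus in $\Sigma$ requires the linearization of $H_\Sigma$ along the boundary, which you assert rather than derive, and which is exactly where the correct coefficient of the circle term is decided. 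Identifying and executing the first-variation identity for the flat free boundary hemisphere (or an equivalent surface-level integration by parts) is the missing idea.
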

	
	The proof of this proposition, which adapts an argument first appearing in \cite[Appendix F]{eichmair2013unique}, is presented in Appendix \ref{dens:res}.
	
	We now proceed to the proof of Theorem \ref{free:af:bd} via the standard implicit function method \cite{ye1997foliation, huang2008center}. We consider, for a function $\phi\in C^{2,\alpha}(S^2_{\rho,+}(b))$ satisfying {\em  Neumann boundary condition} along $S^1_\rho=\partial S^2_{\rho,+}$, the corresponding normal graphical surface over $S^2_{\rho,+}(b)$:
	\begin{equation}\label{normal:free}
		S^2_{\rho,+}(b,\phi)=\{x+\rho^{-\theta}\phi(x)\nu(x);\,x\in S^2_{\rho,+}(b)\},\quad \theta\in (0,1).
	\end{equation}
	By the Taylor's formula, the mean curvature $H_{\rho,+}(b,\rho^{-\theta}\phi)$ of $S^2_{\rho,+}(b,\phi)$ expands as 
	\[
	{{H_{\rho,+}(b,\rho^{-\theta}\phi)= H_{\rho,+}(b,0)+d H_{\rho,+}(b,0)(\rho^{-\theta}\phi)+\int_0^1(1-s)d^2 H_{\rho,+}(b,s\rho^{-\theta}\phi)(\rho^{-\theta}\phi, \rho^{-\theta}\phi) ds.}}\]
	We now observe that $H_{\rho,+}(b,0)= H_{\rho,+,b}$ and $dH_{\rho,+}(a,0)= \mathscr L_{S^2_{\rho,+}(b)}$ is the Jacobi operator appearing in (\ref{stab:op:0}).
	Thus, finding $\phi$ so that 
	\begin{equation}\label{const:h:const}
		H_{\rho,+}(b, \rho^{-\theta}\phi)=\frac{2}{\rho}-\frac{4m}{\rho^2}
	\end{equation}
	is equivalent to solving 
	\begin{equation}\label{eq:phi:R:2}
		\rho^{-\theta}\Delta_{(\rho)}\phi=\frac{9m^2}{\rho^3}+\frac{6m(x-b)\cdot b}{\rho^4}+{{G_{\rho,b}(x)}}+E_{\rho,\phi+}(x),
	\end{equation}
	where as usual $\Delta_{(\rho)}=\Delta_\rho+{2}{\rho^{-2}}$ (recall that  $\Delta_\rho$ is the Laplacian with respect to $\delta_\rho^+$, the induced round metric on $S^2_{\rho,+}$)
	and the remainder $E_{\rho,\phi+}$ has the {the same bound as in (\ref{control:E}).
		We next pull back this equation under the map $F:S^2_{1,+}(0)\to S^2_{\rho,+}(b)$, $F(\mathfrak r)=b+\rho \mathfrak r$, so as to obtain an equation for $\psi=F^*\phi$ on $S^2_{1,+}(0)$:
		\begin{equation}\label{eq:phi:1:a:2}
			\Delta_{(1)}\psi=\frac{9m^2}{\rho^{1-\theta}}+\frac{6m \mathfrak r\cdot b}{\rho^{1-\theta}}+\rho^{2+\theta} F^*G_{\rho,b}(\mathfrak r)+\rho^{2+\theta}F^*E_{\rho,\phi+},
		\end{equation}
		where $\Delta_{(1)}=\Delta_1+{2}$.
		
		We now recall  that the operator  $\Delta_{(1)}:C_\star^{2,\alpha}(S^2_{1,+}(\vec 0))\to C^\alpha(S^2_{1,+}(\vec 0))$, where the star means that we impose the {Neumann boundary condition}, has a nontrivial cokernel generated by the functions $\mathfrak r_\alpha$, $\alpha=1,2$. Clearly, this poses an obstruction to solving (\ref{eq:phi:1:a:2}). However, using (\ref{mc:huang:mean:b2}) we easily calculate that 
		\begin{eqnarray*}
			\int_{S^2_{1,+}(\vec 0)}\mathfrak r_\alpha\left(\frac{9m^2}{\rho^{1-\theta}}+\frac{6m \mathfrak r\cdot b}{\rho^{1-\theta}}+\rho^{2+\theta}F^*G_{\rho,b}(\mathfrak r)\right)dS_{1,+}^{2,\delta^+}(\vec 0)
			& = & \frac{8\pi \mathfrak m }{\rho^{1-\theta}}\left(b_\alpha-\mathcal C_\alpha^+\right)\\
			& & \quad +O(\rho^{-1}\|\psi\|_{C^2}),
		\end{eqnarray*}
		so we end up with 
		\[
		\int_{S^2_{1,+}(\vec 0)}\mathfrak r_\alpha \mathscr G(\mathfrak r,b,\psi) dS_{1,+}^{2,\delta^+}(\vec 0)=\frac{8\pi\mathfrak m}{\rho^{1-\theta}}(b_\alpha-\mathcal C^+_\alpha)+\frac{1}{\rho^{1-\theta}}\widehat E_\alpha,
		\]
		where $\mathscr G(\mathfrak r,b,\psi)$ is 
		the right-hand side of (\ref{eq:phi:1:a:2}) and 
		\begin{equation}\label{remainder:2}
			{{\widehat E_\alpha=O((\rho^{-1}+\rho^{-\theta})\|\psi\|_{C^2})}}
		\end{equation}
		Thus, for each $\rho$ large enough we may choose $b_\rho$ such that 
		\begin{equation}\label{choose:a:2}
			(b_\rho)_\alpha=\mathcal C^+_\alpha-\frac{1}{8\pi\mathfrak m}\widehat E_\alpha,
		\end{equation}
		so as to have
		\begin{equation}\label{elim:obs:2}
			\int_{S^2_{1,+}(\vec 0)}\mathfrak r_\alpha \mathscr G(\mathfrak r,b_\rho,\psi) \,dS_{1,+}^{2,\delta^+}(\vec 0)=0, 
			\qquad \alpha=1,2,
		\end{equation}
		for any $\psi$ with $\|\psi\|_{C^2}$ bounded. This eliminates the obstruction mentioned earlier.
		
		As in the proof of Theorem \ref{fol:curv:cond} above, we may now use the standard fixed point argument to check that (\ref{eq:phi:R:2}) has a {\em unique} solution $\phi_\rho$ for all such $\rho$. In particular, the  graphical surface corresponding to $\phi_\rho$ as in (\ref{normal:free}), denoted $S_{\rho,+}(\phi_\rho)$,  has constant mean curvature given by the right-hand side of (\ref{const:h:const}). Also, the Neumann condition imposed on $\phi_\rho$ implies that this graphical  surface is free boundary. 
		Moreover, an estimate similar to (\ref{schauder}) also holds true here, so $S^2_{\rho,+}(\phi_\rho)$ remains at a fixed distance
		of $S^2_{\rho,+}(b_\rho)$  
		while becoming rounder as $\rho\to+\infty$.  
		
		The geometric center of mass of this family of surfaces, 
		is given by 
		\[
		\mathcal C_{H}^+=\lim_{\rho\to+\infty}\frac{\int_{S^2_{\rho,+}(\phi_\rho)}z\,dS^{2,\delta^+}_{\rho,+}(\phi_\rho)}{\int_{S^{2}_{\rho,+}(\phi_\rho)}dS^{2,\delta^+}_{\rho,+}(\phi_\rho)}, 
		\]
		where $z$ is the position vector. Clearly, 
		$\mathcal C_{H}^+=\mathcal C^+$. 
		Also, each such surface  may be viewed as a graph over $S^2_{\rho}(\mathcal C^+)$. For simplicity  we retain the notation and still denote such a surface by $S^{2}_{\rho,+}(\phi_\rho)$.
		It remains to check that this family of free boundary CMC hemispheres defines a foliation of a neighborhood of infinity with stable leaves. As usual we first consider the stability issue.
		
		\begin{theorem}\label{stab:fb}
			If $\mathfrak m=m/2>0$ then $S_{\rho,+}(\phi_\rho)$ is strictly stable for all $\rho$ large enough.
		\end{theorem}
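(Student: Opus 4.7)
The plan is to parallel the argument in Theorem \ref{stab:fol:th}, with two key modifications: the relevant stability form takes the standard shape for free boundary CMC surfaces, and the role played there by the Lichnerowicz inequality is now assumed by a Reilly-type lower bound for the first nontrivial Neumann eigenvalue on a surface with boundary. Concretely, appealing to the stability criterion from Appendix \ref{var:setup}, the strict stability of $S := S^2_{\rho,+}(\phi_\rho)$ amounts to positivity of the quadratic form
\begin{equation*}
\mathscr{Q}(f) = \int_S \big(|\nabla_S f|^2 - (|W|^2 + \mathrm{Ric}_g(\nu,\nu))f^2\big)\,dS - \int_{\partial S}\Pi^\Sigma(\nu,\nu)\,f^2\,d\ell,
\end{equation*}
over test functions $f \in C^\infty(S)$ satisfying the Neumann condition along $\partial S$ and the volume-preserving constraint $\int_S f\,dS = 0$.

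Proceeding as in Section \ref{const:fol}, I first derive, via Taylor expansion in $\phi_\rho$ together with the Schauder-type control on $\phi_\rho$ analogous to (\ref{schauder}), asymptotic expansions for the geometric invariants of $S$ corresponding to (\ref{est:asym:gr}). In particular, these yield
\begin{equation*}
|W|^2 + \mathrm{Ric}_g(\nu,\nu) = \frac{2}{\rho^2} - \frac{10m}{\rho^3} + O(\rho^{-4}),\qquad K_G = \frac{1}{\rho^2} - \frac{2m}{\rho^3} + O(\rho^{-4}).
\end{equation*}
Since in the ahS setting the coordinate plane $\{x_3 = 0\}$ is totally geodesic for the Schwarzschild part of $g$, the second fundamental form of $\Sigma$ is generated entirely by the error term $p^+$ and satisfies $\Pi^\Sigma = O(r^{-3})$. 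A standard scaling-sensitive trace inequality then gives
\begin{equation*}
\Bigl|\int_{\partial S}\Pi^\Sigma(\nu,\nu)f^2\,d\ell\Bigr| \leq C\rho^{-4}\int_S f^2\,dS + C\rho^{-2}\int_S |\nabla_S f|^2\,dS,
\end{equation*}
so the boundary contribution is negligible at the order we will track. The decisive step is a Reilly-type eigenvalue inequality: since $\partial S$ lies in $\Sigma$, meets $S$ orthogonally, and has geodesic curvature of order $O(\rho^{-2})$ in $S$, Reilly's inequality applied to functions $L^2$-orthogonal to the constants gives
\begin{equation*}
\int_S|\nabla_S f|^2\,dS \geq \Bigl(\frac{2}{\rho^2} - \frac{4m}{\rho^3} + O(\rho^{-4})\Bigr)\int_S f^2\,dS.
\end{equation*}

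Combining these three ingredients yields
\begin{equation*}
\mathscr{Q}(f) \geq \Bigl(\frac{6m}{\rho^3} - C\rho^{-4}\Bigr)\int_S f^2\,dS,
\end{equation*}
which is strictly positive whenever $m > 0$ and $\rho$ is sufficiently large. The principal obstacle is the sharpness required of the Reilly-type estimate: because the Euclidean hemispheric eigenvalue $2/\rho^2$ precisely cancels the leading $|W|^2$ contribution, the argument succeeds only by tracking the $\rho^{-3}$ correction in both the eigenvalue bound and in $|W|^2 + \mathrm{Ric}_g(\nu,\nu)$ and then verifying that the positive mass term survives. This is the free-boundary analogue of the delicate Lichnerowicz step in Theorem \ref{stab:fol:th}; the presence of the boundary and the need to respect both the Neumann condition and the mean-zero constraint are what complicate the proof relative to the closed case.
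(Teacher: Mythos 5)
Your plan is the paper's plan: reduce strict stability to positivity of the quadratic form of Proposition \ref{stab:cmcfree}, expand $|W|^2+{\rm Ric}_g(\nu,\nu)=2\rho^{-2}-10m\rho^{-3}+O(\rho^{-4})$, show the boundary term is negligible because the second fundamental form of $\Sigma$ is $O(\rho^{-3})$, and beat the flat hemispherical Neumann value $2/\rho^2$ by a Reilly-based lower bound $2\rho^{-2}-4m\rho^{-3}-O(\rho^{-4})$, so that $6m\rho^{-3}$ survives. The difference is that the decisive quantitative step, the eigenvalue bound itself, is asserted rather than derived, and the one hypothesis you offer in support of it is too weak to yield it. You claim the geodesic curvature of $\partial S$ in $S$ is only $O(\rho^{-2})$. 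Feed that into Reilly's identity \eqref{eq:reilly}: for the normalized eigenfunction one has $|\nabla_Sf|=O(\rho^{-2})$ and $|\partial S|=O(\rho)$, so $\int_{\partial S}H_{\partial S}|\nabla_{\partial S}f|^2\,d\partial S$ is only $O(\rho^{-5})$; after dividing the resulting quadratic inequality by the eigenvalue, which is of size $\rho^{-2}$, this pollutes the lower bound at order $\rho^{-3}$ — exactly the order at which the mass enters — and the estimate $\widehat\lambda\geq 2\rho^{-2}-4m\rho^{-3}-C\rho^{-4}$ is lost, together with the positivity of $Q$. What is actually needed, and what the paper proves, is $H_{\partial S}=O(\rho^{-3})$: because $S$ is free boundary, $H_{\partial S}=\mathcal B(T,T)$ with $T$ tangent to $\partial S$, and Lemma \ref{sff:exp} (a computation with the ahS metric, using that the half-Schwarzschild boundary plane is totally geodesic and $p^+=O(r^{-2})$) gives $\mathcal B=O(\rho^{-3})$, whence all Reilly boundary errors are $O(\rho^{-6})$.

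Moreover, there is no off-the-shelf ``Reilly-type eigenvalue inequality'' you can simply quote here, since the boundary is not convex but only almost geodesic; the Escobar/Lichnerowicz-with-boundary theorem does not apply directly. The paper obtains the bound by applying Reilly's formula to the eigenfunction, using $|\nabla_S^2f|^2\geq\tfrac12(\Delta_Sf)^2$, the expansion of $K_G$, a preliminary two-sided estimate $\widehat\lambda\sim\rho^{-2}$ together with pointwise bounds on $f$, $\nabla_Sf$, $\Delta_Sf$, and then solving the resulting quadratic inequality in $\widehat\lambda$. Supplying this derivation, with the corrected $O(\rho^{-3})$ control on the geodesic curvature, is what your proposal is missing. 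The remaining deviations are cosmetic: you handle $\int_{\partial S}\kappa f^2$ by a trace inequality and work with the Neumann eigenvalue, while the paper bounds it via $\kappa=O(\rho^{-3})$ and a sup estimate on $f$ and works with the Robin-type problem $\partial f/\partial\mu=\kappa f$; also, the Neumann condition you impose on test functions is not part of the stability criterion, though by density of such functions in $H^1(S)$ the quantitative bound extends to all of $\mathcal F(S)$.
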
 
		
		For the proof we first note that the geometric invariants of $S_{\rho,+}(\phi_\rho)$ expand as  
		\begin{equation}\label{est:asym:gr:fb}
			\left\{
			\begin{array}{rcl}
				|W|^2 & = & {2}{\rho^{-2}}-{8m}{\rho^{-3}}+O(\rho^{-4}),\\
				{\rm Ric}(\nu,\nu) & = & -{2m}{\rho^{-3}}+O(\rho^{-4}),\\
				K_G & = & {\rho^{-2}}- {2m}{\rho^{-3}}+O(\rho^{-4}).
			\end{array}
			\right.
		\end{equation}
		We also need the asymptotic expansion of the second fundamental of $\Sigma$, the non-compact boundary of $M$.

		\begin{lemma}\label{sff:exp}
			The second fundamental form $\mathcal B$ of $\Sigma$ satisfies 
			\begin{equation}\label{sff:exp:2}
				\mathcal B_{\alpha\beta}=O(\rho^{-3}), \quad \alpha,\beta=1,2.
			\end{equation}
		\end{lemma}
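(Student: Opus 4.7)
My proof plan is to work directly in the asymptotic coordinates of Definition \ref{half:shc:def}, in which $\Sigma$ corresponds to $\{x_3=0\}\subset \mathbb{R}^3_+$ and $g_{ij} = (1+\tfrac{2m}{r})\delta^+_{ij} + p^+_{ij}$ with $p^+ = O_3(r^{-2})$. The key parity observation driving the whole argument is that the Schwarzschild conformal factor $1+2m/r$ satisfies $\partial_3(2m/r) = -2mx_3/r^3$, which vanishes identically on $\Sigma$. In particular, the leading $O(r^{-1})$ part of $g$ contributes nothing to the normal derivative of $g_{\alpha\beta}$ along $\Sigma$, and only the faster-decaying perturbation $p^+$ is what ultimately drives $\mathcal{B}$.

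The first step is to exhibit the $g$-unit outward normal $N$ along $\Sigma$. The conditions $g(N,\partial_\alpha)=0$ for $\alpha=1,2$, combined with $g(N,N)=1$ and the relations $g_{\alpha 3} = p^+_{\alpha 3} = O(r^{-2})$ and $g_{33} = 1+\tfrac{2m}{r}+O(r^{-2})$, yield $N = -\bigl(1-\tfrac{m}{r}\bigr)\partial_3 + O(r^{-2})\partial_k$. Writing the second fundamental form via Christoffel symbols of the first kind,
\[
\mathcal{B}_{\alpha\beta} = -\Gamma_{\ell\alpha\beta}\,N^\ell, \qquad \Gamma_{\ell\alpha\beta} = \tfrac{1}{2}\bigl(\partial_\alpha g_{\beta\ell}+\partial_\beta g_{\alpha\ell}-\partial_\ell g_{\alpha\beta}\bigr),
\]
the tangential piece $-\Gamma_{\gamma\alpha\beta}N^\gamma$ for $\gamma\in\{1,2\}$ is bounded by $O(r^{-2})\cdot O(r^{-2}) = O(r^{-4})$, since $g-\delta^+ = O_3(r^{-1})$. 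This piece is thus negligible and I may reduce the problem to estimating $-\Gamma_{3\alpha\beta}N^3$.

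The bound on $\Gamma_{3\alpha\beta}$ is where the parity observation enters. The first two summands in $\Gamma_{3\alpha\beta} = \tfrac{1}{2}(\partial_\alpha g_{\beta 3}+\partial_\beta g_{\alpha 3}-\partial_3 g_{\alpha\beta})$ involve only $\partial g_{\alpha 3} = \partial p^+_{\alpha 3} = O(r^{-3})$. For the third summand,
\[
\partial_3 g_{\alpha\beta} = -\tfrac{2mx_3}{r^3}\delta_{\alpha\beta} + \partial_3 p^+_{\alpha\beta},
\]
and restricting to $\{x_3=0\}$ kills the Schwarzschild piece by parity, leaving $\partial_3 g_{\alpha\beta}|_\Sigma = \partial_3 p^+_{\alpha\beta}|_\Sigma = O(r^{-3})$. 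Hence $\Gamma_{3\alpha\beta}|_\Sigma = O(r^{-3})$, and combined with $|N^3| = O(1)$ and $r\sim\rho$ on the portion of $\Sigma$ at Euclidean distance $\rho$ from the origin, we conclude $\mathcal{B}_{\alpha\beta} = O(\rho^{-3})$.

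I do not foresee a serious obstacle here; the computation is essentially routine once one notices the parity cancellation, which is exactly what keeps the bound at $\rho^{-3}$ rather than the naive $\rho^{-2}$ one would obtain for an arbitrary hypersurface in an asymptotically flat manifold with the stated decay rates.
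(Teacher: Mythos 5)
Your proof is correct and follows essentially the same route as the paper: both arguments reduce $\mathcal{B}_{\alpha\beta}$ to the Christoffel term containing $\partial_3 g_{\alpha\beta}$, observe that the Schwarzschild contribution $\partial_3\bigl(1+\tfrac{2m}{r}\bigr)\delta_{\alpha\beta}=-\tfrac{2mx_3}{r^3}\delta_{\alpha\beta}$ vanishes on $\Sigma=\{x_3=0\}$, and bound the remaining terms by the $O(\rho^{-3})$ decay of $\partial p^+$. The only cosmetic difference is that you use Christoffel symbols of the first kind together with an explicit expansion of the unit normal, whereas the paper works with $\Gamma^3_{\alpha\beta}$ directly; the estimates are the same.
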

		
		\begin{proof}
			Recall that 
			\[
			g=\left(1+\frac{2m}{r}\right)\delta^++p^+, \quad p^+=O(\rho^{-2}).  
			\]
			Outside a compact subset of $M$, $\Sigma$ is defined by $x_3=0$  and its tangent space is generated by  $\{\partial_1,\partial_2\}$. If  $\eta=\eta^i\partial_i$ is the {{inward}} unit normal along $\Sigma$ then 
			\[
			\mathcal B_{\alpha\beta}=\langle\eta,\nabla_{\partial_\alpha}\partial_\beta\rangle=\Gamma_{\alpha\beta}^i\langle \eta,\partial_i
			\rangle=\Gamma_{\alpha\beta}^3g_{3i}\eta^i.
			\]
			Since 
			\begin{eqnarray*}
				\Gamma^3_{\alpha\beta} & = & {{\frac{1}{2}g^{33}\left(g_{\alpha 3,\beta}+g_{\beta 3,\alpha}-g_{\alpha\beta,3}\right)+O(\rho^{-4})}}\\
				& = & -\frac{1}{2}\left(1+\frac{2m}{r}\right)^{-1}g_{\alpha\beta,3}+O(\rho^{-3}),
			\end{eqnarray*}
			and 
			\begin{eqnarray*}
				g_{\alpha\beta,3} &  = & -2mr^{-2}\frac{\partial r}{\partial x_3}\delta_{\alpha\beta}+O(\rho^{-3})\\
				& = & -2mr^{-3}x_3\delta_{\alpha\beta}+O(\rho^{-3})\\
				& = & O(\rho^{-3}),
			\end{eqnarray*}
			the result follows. 
		\end{proof}
		
		By Proposition \ref{stab:cmcfree}, the proof of Theorem \ref{stab:fb} involves  
		estimating from below the quadratic form 
		\[
		Q(f)= \int_S\left(|\nabla_Sf|^2-\left(|W|^2+{\rm Ric}(\nu,\nu)\right)f^2\right)dS-\int_{\partial S}\kappa f^2d\partial S, \quad f\in \mathcal F(S), 
		\]
		where here we set $S=S_{\rho,+}(\phi_\rho)$ for simplicity. 
		We may assume that $\int_S f^2dS=1$, which implies {$f=O(\rho^{-1})$}. Hence, using (\ref{est:asym:gr:fb}), 
		\begin{equation}\label{quad:form}
			Q(f)=\int_S(|\nabla_Sf|^2dS-\int_{\partial S}\kappa f^2d\partial S-\frac{2}{\rho^2}+\frac{10m}{\rho^3}+O(\rho^{-4}).
		\end{equation}
		Thus, we are left with the task of estimating from below the quadratic form
		\[
		\widehat Q(f)=\int_S(|\nabla_Sf|^2dS-\int_{\partial S}\kappa f^2d\partial S, \quad f\in \mathcal F(S),
		\]
		which is equivalent to estimating from below the first eigenvalue  $\widehat \lambda$ of the eigenvalue problem
		\begin{equation}\label{eingen:prob}
			\left\{
			\begin{array}{ll} 
				-\Delta_Sf=\lambda f & {\rm in}\,\,\, S \\
				\frac{\partial f}{\partial \mu}=\kappa f & {\rm on}\,\,\, \partial S
			\end{array}
			\right.
		\end{equation}
		where $f\in \mathcal F(S)$.
		Notice that a comparison with the first eigenvalue  $2/\rho^2$ of the Neumann ($\kappa=0$) eigenvalue problem on $(S_{\rho,+}(a),\delta_\rho^+)\hookrightarrow\mathbb (R^3_+,\delta^+)$  already shows that $\widehat\lambda>0$ and provides the preliminary but useful estimate $\widehat\lambda=O(\rho^{-2})$.

		{If $f$ is an eigenfunction of (\ref{eingen:prob}) with eigenvalue $\lambda=\widehat\lambda$,}} we see that
	\begin{equation}\label{mult:eq}
		\int_S|\nabla_Sf|^2dS=\int_{\partial S}\kappa f^2d\partial S+\widehat\lambda. 
	\end{equation}
	From (\ref{sff:exp:2}), $\kappa=\mathcal B_{\alpha\beta}\nu^\alpha\nu^\beta=O(\rho^{-3})$, so that 
	$
	\int_{\partial S}\kappa f^2d\partial S=O(\rho^{-4})
	$
	and $\partial f/\partial\mu=O(\rho ^{-4})$, {{where \eqref{eingen:prob} was used in the latter step.}}
	Thus, 
	$
	\int_S|\nabla_Sf|^2dS=O(\rho^{-2}),
	$
	so that $\nabla_Sf=O(\rho^{-2})$ and, moreover, from (\ref{eingen:prob}) we get $\Delta_Sf=O(\rho^{-3})$. 
	
	We now apply a well-known integral identity due to Reilly \cite{reilly1977applications}. In our setting ($\dim S=2$) it simplifies to 
	\begin{eqnarray}\label{eq:reilly}
		\int_S\left((\Delta_Sf)^2-|\nabla_S^2f|^2\right)dS & = & 2\int_{\partial S}\frac{\partial f}{\partial \mu}\Delta_{\partial S }fd\partial S\\
		& & \quad +\int_{\partial S}H_{\partial S}\left(\left(\frac{\partial f}{\partial \mu}\right)^2+|\nabla_{\partial S}f|^2\right)d\partial S\notag\\
		& & \qquad +\int_SK_G|\nabla_Sf|^2dS,\notag
	\end{eqnarray}
	where $H_{\partial S}$ is the mean (in fact, geodesic) curvature  of $\partial S$ in $S$. Since $S$ is free boundary, $H_{\partial S}=\mathcal B(T,T)=O(\rho^{-3})$, where $T$ is a unit tangent vector along $\partial S$ and we used (\ref{sff:exp:2}). Thus, the second integral in the right-hand side equals
	\[
	\int_{\partial S}H_{\partial S}|\nabla_Sf|^2d\partial S=O(\rho^{-6}).
	\] 
	On the other hand,
	\[
	\Delta_{\partial S}f=\Delta_Sf-\frac{\partial^2f}{\partial\mu^2}=O(\rho^{-3}),
	\]
	so that the first integral in the right-hand side is also $O(\rho^{-6})$. Finally, using (\ref{est:asym:gr:fb}) and (\ref{mult:eq}), 
	\[
	\int_SK_G|\nabla_Sf|^2dS\geq \left(\frac{1}{\rho^2}-\frac{2m}{\rho^3}-C\rho^{-4}\right)\left(\widehat\lambda+O(\rho^{-4})\right), \quad C>0, 
	\]
	and combining this with the fact that 
	$|\nabla_S^2f|^2\geq (\Delta_Sf)^2/2$ and $\widehat\lambda=O(\rho^{-2})$  we get from \eqref{eq:reilly} that
	\[
	\widehat\lambda\left({\widehat\lambda}-\frac{2}{\rho^2}+\frac{4m}{\rho^3}\right)\geq -C'\rho^{-6}, \quad C'>0.
	\]
	Since we already know that $\widehat \lambda^{-1}=O(\rho^2)$ is positive, this gives
	\[
	\widehat\lambda\geq \frac{2}{\rho^2}-\frac{4m}{\rho^3}-C''\rho^{-4}, \quad C''>0.
	\]
	Combining this with (\ref{quad:form}) we finally have
	\[
	Q(f)\geq \frac{6m}{\rho^3}-C''\rho^{-4},
	\]
	which completes the proof of Theorem \ref{stab:fb}.

	From this point on, the proof that the family of free boundary CMC hemispheres comprises a foliation  follows from a simple variation of the standard argument. Indeed, if $\chi_0<\chi_1$ are the first two (unconstrained) eigenvalues of {{$\mathsf L_{S_{\rho,+}(\phi_\rho)}$, a spin-off of the analysis above leads to  
			\begin{equation}\label{est:invL}
				\chi_0=-\frac{2}{\rho^2}+\frac{10m}{\rho^3}+O(\rho^{-4}), \quad \chi_1\geq \frac{6m}{\rho^3}-C\rho^{-4}.
			\end{equation}
			In other words, $\mathsf L_{S_{\rho,+}(\phi_\rho)}:C_\bullet^{2,\alpha}(S_{\rho,+}(\phi_\rho))\to C^{\alpha}(S_{\rho,+}(\phi_\rho))$ is injective, where the bullet indicates that the boundary condition in (\ref{eingen:prob}) is imposed. Since it is  known that this Jacobi operator is Fredholm of index zero \cite[Section 2]{maximo2017free}, we see that it is surjective as well. In particular, there exists  $f_\rho\in C_\bullet^{2,\alpha}(S_{\rho,+}(\phi_\rho))$ such that $\mathsf L_{S_{\rho,+}(\phi_R)}f_\rho = 1$. 
			On the other hand, just like in the discussion after the proof of Proposition \ref{nonvan} above, we may realize $f_\rho$ as the variational function associated to a deformation of  $S_{\rho,+}^2(\phi_{\rho})$ by the graphical free boundary CMC hemispheres, now parameterized by their mean curvature $H\in (H_\rho-\epsilon,H_\rho+\epsilon)$, $\epsilon>0$. Since 
			the Nash-Moser scheme may be implemented to make sure
			that $f_\rho$ never vanishes, the standard argument using the inverse function theorem shows that this deformation actually provides a diffeomorphism of $(H_\rho-\epsilon,H_\rho+\epsilon)\times S_{\rho,+}^2(\phi_{\rho})$ onto a small neighborhood of $S_{\rho,+}^2(\phi_{\rho})$ in $M$. This proves the existence of the foliation and completes the proof of Theorem \ref{free:af:bd}.

			\section{Large relative isoperimetric hemispheres: the proof of Theorem \ref{iso:ext:em1}}\label{large:rel:iso}
			
			Here we follow \cite{eichmairlarge} closely  and present the proof of Theorem \ref{iso:ext:em1}.
			We start by briefly reviewing the argument leading to their main result, which  is based on three ingredients:
			\begin{itemize}
				\item An effective area comparison result for large volume, off-center regions  in Schwarz\-schild space, which 
				refines Bray's characterization of isop\-er\-imetric regions as being those enclosed by centered spheres (together with the minimal horizon) \cite{bray1997penrose,corvino2007isoperimetric}. This is then transplanted to an effective estimate for large volume, off-center regions in asymptotically Schwarz\-schild manifolds; see Proposition 3.3 and Theorem 3.4 in \cite{eichmairlarge}.
				\item A precise understanding of the behavior of minimizing sequence of regions attaining the corresponding isoperimetric profile, to the effect that they split as the disjoint union of  a (possibly empty) isoperimetric region (for the volume it encloses) that remains at a finite distance of a given point and a coordinate ball of radius $r\geq 0$ which slides away toward the asymptotic region. Moreover, if none of these regions degenerate (in particular, $r>0$) then the boundary of the isoperimetric region left behind has constant mean curvature $2/r$; see \cite[Proposition 4.2]{eichmairlarge}, which  relies on \cite[Theorem 2.1]{ritore2004existence}.
				\item  Existence of a foliation by CMC spheres filling out the asymptotic region as in \cite{huisken1996definition,ye1997foliation, huang2008center}.  
			\end{itemize}
			
			The first item above is used to make sure that the boundary of a sufficiently large isoperimetric region remains close to a centered coordinate sphere bounding the same volume in the sense that the scale invariant $C^2$-norm of the function describing
			such large isoperimetric surface as a normal graph over the 
			centered sphere tends to zero as the enclosed volume goes to
			infinity. Otherwise, by suitably scaling down the region one is able to check that it is off-center, hence not isoperimetric by the effective area estimate, a contradiction; see \cite[Theorem 5.1]{eichmairlarge}. With this information at hand, one sees from the second item above that for a large enclosed volume the worst case scenario takes place whenever the runaway ball does not degenerate, for in this case the isoperimetric region attaining this volume  splits as the disjoint union of two large balls each roughly with the same radius. Since this configuration is far from being isoperimetric, we get a contradiction. Thus, the runaway ball actually disappears and the isoperimetric region starts filling out the whole manifold as the enclosed volume diverges. Moreover, as its boundary remains close to a centered coordinate sphere, it has to coincide with a leaf of the foliation appearing in the third item.
			
			It turns out that all of these ingredients also work fine in our setting. Of course, the existence of the relevant foliation, in our case by free boundary CMC hemispheres, is the content of Theorem \ref{free:af:bd} above.  We now discuss the validity of the remaining ones. 
			
			First, it is clear that in half-Schwarzschild space, the region bounded by the minimal horizon $r=m/2$ and a coordinate sphere of radius $r>m/2$ is the only one attaining the  relative isoperimetric profile for the corresponding volume. Otherwise, after reflecting upon the totally geodesic boundary $x_3=0$ we obtain a region in ``boundaryless'' Schwarzschild space which is isoperimetric but differs from any of the symmetric regions realizing the corresponding isoperimetric profile as in Bray's result. Essentially the same argument yields an effective area comparison for off-center regions in ahS manifolds. 
			
			\begin{definition}\label{off:center}
				Let $(M,g,\Sigma)$ be as in Theorem \ref{iso:ext:em1}. Given  $(\tau,\eta)\in(1,+\infty)\times(0,1)$, a bounded Borel set $\Omega\subset M$ of finite relative perimeter is said to be $(\tau,\eta)$-off-center if:
				\begin{enumerate}
					\item there exists a large coordinate hemisphere $S^2_{r,+}$, $r>1$, whose enclosed region, say $M_r$, has the same volume as $\Omega$;
					\item $\mathcal H^2_g(\partial^*\Omega\backslash M_{\tau r})\geq \eta\mathcal H_2(S^2_{r,+})$. 
				\end{enumerate} 
				Here, $\mathcal H^2_g$ is Hausdorff measure with respect to $g$.
			\end{definition}
			
			The next proposition provides the analogue of the first item above to our setting.  
			
			\begin{proposition}\label{off:cent:est}
				Let $(M,g,\Sigma)$ be as in Theorem \ref{iso:ext:em1}. For every $(\tau,\eta)\in(1,+\infty)\times(0,1)$ there exists $V_0>0$ and $\Theta>0$ such that the following holds. Let $\Omega\subset M$ be  a bounded Borel set with finite relative perimeter whose volume is at least $V_0$ and  which is  $(\tau,\eta)$-off-center and further satisfies $\mathcal H^2_g(\partial^*\Omega)^{1/2}{\rm vol}_g(\Omega)^{-1/3}\leq \Theta$ and $\mathcal H^2_g(M_\sigma\cap \partial^*\Omega)\leq\Theta\sigma^2$ for all $\sigma\geq 1$. Then,
				\begin{equation}\label{effec:bound}
					\mathcal H^2_g(S^2_{r,+})+cr\leq \mathcal H^2_g(\partial^*\Omega), \quad c=c(m,\tau,\eta)>0.
				\end{equation}
			\end{proposition}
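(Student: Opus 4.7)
The plan is to reduce the claim to the Eichmair-Metzger effective off-center area estimate in the (boundaryless) asymptotically Schwarzschild case, via a doubling across the non-compact boundary $\Sigma$.

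First I would form the double $(\widetilde M, \widetilde g) = (M,g) \cup_\Sigma (M,g)$ obtained by isometric gluing along $\Sigma$. In the asymptotic region, $\widetilde g$ extends the ahS ansatz $(1+2m/r)\delta^+ + p^+$ from $\mathbb R^3_+ \setminus B_1(\vec 0)$ to all of $\mathbb R^3 \setminus B_1(\vec 0)$, producing an asymptotically Schwarzschild metric with the same ADM mass $m$. The metric is smooth off the seam $\Sigma$ and only Lipschitz across it, the jump of the normal derivatives being controlled by the second fundamental form $\mathcal B$ of $\Sigma$; by Lemma \ref{sff:exp} this defect satisfies $\mathcal B_{\alpha\beta} = O(r^{-3})$ in the asymptotic region. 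Let $\iota$ denote the associated $\mathbb Z_2$ isometry and set $\widetilde\Omega = \Omega \cup \iota(\Omega)$.

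Second, I would transfer the hypotheses from $\Omega$ to $\widetilde\Omega$. Symmetrization gives ${\rm vol}_{\widetilde g}(\widetilde\Omega) = 2\,{\rm vol}_g(\Omega)$, while the reduced boundary contribution along $\Sigma$ is annihilated by the $\iota$-symmetry, yielding $\mathcal H^2_{\widetilde g}(\partial^*\widetilde\Omega) = 2\,\mathcal H^2_g(\partial^*\Omega \cap \interior M)$. The scale-invariant bounds in the hypothesis rescale with $2^{1/6}$ and $2$, respectively, so they hold for $\widetilde\Omega$ with a slightly larger constant $\widetilde\Theta$. The volume-balanced coordinate sphere $\widetilde S_r \subset \widetilde M$ for $\widetilde\Omega$ is precisely the double of $S^2_{r,+}$ (up to a correction of lower order coming from the inner boundary $S^2_{1,+}$), so the $(\tau,\eta)$-off-centered\-ness of $\Omega$ in $M$ translates directly, with the same parameters, into $(\tau,\eta)$-off-centered\-ness of $\widetilde\Omega$ in $\widetilde M$. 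Applying the Eichmair-Metzger effective estimate (Proposition 3.3 and Theorem 3.4 of \cite{eichmairlarge}) in $(\widetilde M, \widetilde g)$ yields
\[
\mathcal H^2_{\widetilde g}(\widetilde S_r) + 2 c r \leq \mathcal H^2_{\widetilde g}(\partial^*\widetilde\Omega), \qquad c = c(m,\tau,\eta) > 0,
\]
and dividing by two gives the desired bound (\ref{effec:bound}).

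The main obstacle is that the doubled metric $\widetilde g$ is only Lipschitz across $\Sigma$, whereas the Eichmair-Metzger argument is phrased for smooth asymptotically Schwarzschild metrics. Two remedies are available. One may smooth $\widetilde g$ in a tubular neighborhood of $\Sigma$ at scale $\varepsilon > 0$, apply the smooth estimate to the regularized metric, and pass to the limit $\varepsilon \to 0$, using the fact that both the relative perimeter and the volume of $\widetilde\Omega$ are stable under Lipschitz-small metric perturbations localized near $\Sigma$. Alternatively, one may retrace the Eichmair-Metzger proof and observe that the key steps--namely Bray's characterization of isoperimetric regions in Schwarzschild and its effective refinement--are based on symmetric rearrangement and monotonicity arguments that only require $C^{0,1}$ regularity of the background metric and asymptotic control of the Schwarzschild defect, both of which persist for $\widetilde g$. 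A further minor point is the absence of a minimal horizon in our setup: the ahS manifold is diffeomorphic to $\mathbb R^3_+ \setminus B_1(\vec 0)$ and the double to $\mathbb R^3 \setminus B_1(\vec 0)$, so one imports the \cite{eichmairlarge} estimates in the horizonless regime, in which large coordinate spheres are still the comparison objects.
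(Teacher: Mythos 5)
Your argument is correct in outline and rests on the same underlying idea as the paper --- use reflection across the boundary to reduce the effective off-center estimate to Eichmair--Metzger --- but you apply the reflection at a different point, and this changes the technical burden. The paper doubles only the \emph{exact} half-Schwarzschild model: there the boundary $x_3=0$ is totally geodesic, so the double is the smooth, exact Schwarzschild space, \cite[Proposition 3.3]{eichmairlarge} applies verbatim, and halving gives the effective bound for half-Schwarzschild; the passage to the ahS manifold is then made by running the scaling/comparison argument of \cite[Theorem 3.4]{eichmairlarge} directly in the half-space setting, which uses only $C^0$-closeness of $g$ to the half-Schwarzschild model. You instead double the entire ahS manifold $(M,g,\Sigma)$ and the region $\Omega$, and invoke Eichmair--Metzger wholesale on the double. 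Your bookkeeping for this reduction is right: the wetted part of $\partial^*\Omega$ along $\Sigma$ is $\mathcal H^2$-negligible in $\partial^*\widetilde\Omega$, volumes and relative perimeters double exactly (so the volume-balanced sphere for $\widetilde\Omega$ is exactly the double of $S^2_{r,+}$, with no correction needed --- there is no inner boundary in $M$), the $(\tau,\eta)$-off-center condition transfers with the same parameters, the scale-invariant hypotheses transfer after shrinking $\Theta$ by a fixed factor, and the doubled mass is indeed $m=2\mathfrak m$. The price is that $\widetilde g$ is only Lipschitz across the seam $\Sigma$ (with an $O(r^{-3})$ defect at infinity by Lemma \ref{sff:exp}, but no smallness in the compact core), whereas the paper's ordering makes the regularity issue evaporate entirely.

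That seam issue is the one point you leave at the level of a sketch, and it is where your write-up would need to be completed. The plausible way out is the one you hint at: Proposition 3.3 of \cite{eichmairlarge} concerns exact Schwarzschild (smooth, untouched by your doubling), and the transplantation in their Theorem 3.4 is an area/volume comparison that uses only $C^0$ control of $g$ relative to the Schwarzschild model --- consistent with Remark \ref{key} in the paper --- together with the hypothesis $\mathcal H^2(M_\sigma\cap\partial^*\Omega)\leq\Theta\sigma^2$ to dispose of the core; none of this sees first derivatives of the metric, so a Lipschitz seam (or a smoothing at scale $\varepsilon$ with constants uniform in $\varepsilon$) should be harmless. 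But as written this is an assertion about someone else's proof rather than an argument, and it is precisely the verification the paper's route renders unnecessary. If you keep your ordering, either carry out the mollification limit with explicit uniformity of $V_0,\Theta,c$, or point to the specific steps of \cite[Theorem 3.4]{eichmairlarge} and check they close under $C^{0,1}$ regularity; otherwise, reflect only the model, as the paper does, and transplant afterwards.
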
    
			
			\begin{proof}
				First note that an effective bound similar to (\ref{effec:bound}) holds in case $\Omega$ is a subset of the half-Schwarzschild space. Indeed, upon reflecting this $\Omega$ across the totally geodesic boundary $x_3=0$ we obtain a region in the exact ``boundaryless'' Schwarzschild space to which \cite[Proposition 3.3]{eichmairlarge} applies. By halving the so obtained estimate, the sought for bound follows. As already emphasized in \cite{eichmairlarge}, this bound is robust enough to provide, via a suitable scaling argument, the effective area estimate (\ref{effec:bound}) for a  $(\tau,\eta)$-off-center region in a ahS manifold as in the theorem. The argument is virtually identical to the one appearing in the proof of \cite[Theorem 3.4]{eichmairlarge}, so it is omitted here.  	
			\end{proof}
			
			By a scaling argument as in the proof of \cite[Theorem 5.1]{eichmairlarge}, we check that isoperimetric surfaces remain close to a centered hemisphere as the volume diverges. We now take a divergent sequence of volumes $V_i\to +\infty$. Arguing as in \cite[Proposition 4.2]{eichmairlarge}, we see that there exits a fixed isoperimetric region $\Omega_i$ and a coordinate half-ball of radius $r_i\geq 0$  which is disjoint from $\Omega_i$ and contributes to the attained isoperimetric profile in the expected manner:
			\[
			{\rm vol}_3(\Omega_i)+\frac{2\pi r_i^3}{3}=V_i,\quad \mathcal H_g^2(\partial^*\Omega_i)+2\pi r_i^2=I_g(V_i).
			\] 
			This is our analogue of the volume splitting in the second item above. Moreover, if $r_i>0$ as $i\to+\infty$ then the mean curvature of $\partial^*\Omega_i$ is $2/r_i$, so the relative isoperimetric region associated to $V_i$     
			encompasses two disjoints half-balls each roughly with the same radius $r_i$. This contradiction shows that $r_i=0$ for all $i$ large enough. Thus, to each volume greater than some $V_0$ the corresponding isoperimetric region stays  at a finite distance from a given point on the manifold. Since we already known that this region centers around a large coordinate hemisphere, it certainly sweeps out the whole manifold as the volume diverges and its boundary necessarily coincides with a free boundary CMC hemisphere described in Theorem \ref{free:af:bd}; see Appendix \ref{uniq:stab}  in regard to this last point. This completes our sketch of the proof of Theorem \ref{iso:ext:em1}. 
			
			\begin{remark}\label{key}
				As already pointed out in \cite{eichmairlarge}, the existence of relative isoperimetric regions for sufficiently large enclosed volumes via the argument above only requires that $g$ is $C^0$-asymptotic to  half-Schwarzschild. The higher order asymptotics, the Regge-Teitelboim condition included, are only needed to make sure that a foliation exists as in Theorem \ref{free:af:bd}, so its leaves may be identified to the isoperimetric hemispheres.  
			\end{remark}
			
			\begin{remark}\label{rem:regul}
				The argument above  assumes the well-known fact that relative isoperimetric surfaces are sufficiently regular (indeed smooth) up to the boundary and hence are stable free boundary CMC surfaces as explained in Appendix \ref{var:setup}. This most desirable property is explicitly stated in \cite[Proposition 2.4]{ritore2004existence} and we refer to the discussion there for the pertaining sources. 
			\end{remark}

			\begin{remark}\label{asym:hyp}
				Very likely an analogue of Theorem \ref{free:af:bd} holds true for the class of asymptotically hyperbolic $3$-manifolds with a non-compact boundary introduced in \cite{almaraz2020mass}. This would extend a series of results  in the boundaryless case literature starting with \cite{rigger2004foliation,neves2009existence,neves2010existence,mazzeo2011constant}; see also \cite{nerz2018foliations} and the references therein for a recent account of the status of this line of research. In the same vein, it might also be possible to characterize the corresponding large relative isoperimetric regions in the line of the main result in \cite{chodosh2016large}, so as to extend Theorem \ref{iso:ext:em1} accordingly.  
			\end{remark}


			\appendix
			
			\section{The large scale isoperimetric deficits and the mass: the proofs of Theorems  \ref{isovk} and \ref{remwithbd}}\label{isop:mass:app}
			
			{{
					The arguments to prove Theorems \ref{isovk} and \ref{remwithbd} are simple variations on the computation appearing in \cite[Section 2]{fan2007large}, where a proof of Theorem \ref{huisfst} appears. This justifies the inclusion of a somewhat detailed account of their calculation in what follows. }}
			
			{{
					If $(M,g)$ is asymptotically flat as in Definition \ref{af:eman},}} we first observe that, 
			since $\partial r/\partial x_i=x_i/r$, we have 
			\begin{equation}\label{drform}
				\nabla r=g^{ij}\frac{x_i}{r}\frac{\partial}{\partial x_j}.
			\end{equation}
			and hence
			\begin{equation}\label{asymgradr}
				|\nabla r|^2=g^{ij}\frac{x_ix_j}{r^2}=1-e_{ij}\frac{x_ix_j}{r^2}+O(r^{-2\tau}). 
			\end{equation}
			Also, if  $\nu$ is the outward unit normal vector field to the coordinate $2$-sphere $S^2_r$ then  
			\begin{equation}\label{nu2}
				\nu=\frac{x}{r}+O(r^{-\tau}). 
			\end{equation}

			Let $dS^{2,\delta}_r=r^2dS^{2,\delta}_1$ be the area element of the Euclidean sphere of radius $r$. It follows that the area element of the corresponding coordinate sphere $S^2_r$ expands as
			\begin{equation}\label{areaexp}
				dS_r^2=\left(1+\frac{1}{2}h^{ij}e_{ij}+O(r^{-2\tau})\right)dS_r^{2,\delta}, 
			\end{equation} 
			where 
			\begin{equation}\label{indmetricexp}
				h_{ij}=g_{ij}-\nu_i\nu_j=\delta_{ij}-\frac{x_ix_j}{r^2}+O(r^{-\tau})
			\end{equation} 
			is the induced metric (extended to vanish in the radial direction). Thus, the  area of $S^{2}_r$ is 
			\begin{equation}\label{exparea}
				A(r)=4\pi r^2+\frac{1}{2}\int_{S_r^2}h^{ij}e_{ij}dS_r^{2,\delta}+O(r^{2-2\tau}). 
			\end{equation}
			From this we obtain
			\begin{eqnarray*}
				\frac{d}{dr}A(r) 
				& = & 8\pi r+\frac{1}{2}\int_{S_r^2}h^{ij}\frac{x_k}{r}e_{ij,k}dS_r^{2,\delta}+\frac{1}{r}\int_{S^2_r}h^{ij}e_{ij}dS_r^{2,\delta}+O(r^{1-2\tau}),
			\end{eqnarray*}
			where the comma means partial differentiation. Using (\ref{indmetricexp}) we get
			\begin{eqnarray*}
				\frac{d}{dr}A(r) & = & 8\pi r + \frac{1}{2}\int_{S_r^2}e_{ii,k}\frac{x_k}{r}dS_r^{2,\delta}-\frac{1}{2}\int_{S_r^2}e_{ij,k}\frac{x_ix_jx_k}{r^3}dS_r^{2,\delta}\\
				& & \quad +\frac{1}{r}\int_{S^2_r}h^{ij}e_{ij}dS_r^{2,\delta}+O(r^{1-2\tau}).
			\end{eqnarray*}
			
			We now work out the third term in the right-hand side. We first note that
			\begin{equation}\label{forderq}
				\frac{\partial}{\partial x_i}\frac{x_j}{r}=\frac{\delta_{ij}}{r}-\frac{x_ix_j}{r^3}. 
			\end{equation}
			We then compute:
			\begin{eqnarray*}
				\int_{S_r^2}\frac{\partial}{\partial x_k}\left(e_{ij}\frac{x_j}{r}\right)\frac{x_ix_k}{r^2}dS_r^{2,\delta} & = & \int_{S_r^2}e_{ij,k}\frac{x_ix_jx_k}{r^3}dS_r^{2,\delta}\\
				&& \quad +\int_{S_r^0}e_{ij}\left(\frac{\delta_{jk}}{r}-\frac{x_jx_k}{r^3}\right)\frac{x_ix_k}{r^2}dS_r^{2,\delta}\\
				& = & \int_{S_r^2}e_{ij,k}\frac{x_ix_jx_k}{r^3}dS_r^{2,\delta},
			\end{eqnarray*}
			so we have
			\begin{eqnarray*}
				\int_{S_r^2}e_{ij,k}\frac{x_ix_jx_k}{r^3}dS_r^{2,\delta}
				& = & 	\int_{S_r^2}\frac{\partial}{\partial x_k}\left(e_{ij}\frac{x_j}{r}\right)\frac{x_ix_k}{r^2}dS_r^{2,\delta}\\
				& = & \underbrace{\int_{S_r^2}\frac{\partial}{\partial x_i}\left(e_{ij}\frac{x_j}{r}\right)dS_r^{2,\delta}}_{(I)}\\
				& & \quad 	\underbrace{-
					\int_{S^2_r}\left(\delta_{ik}-\frac{x_ix_k}{r^2}\right)\frac{\partial}{\partial x_k}\left(e_{ij}\frac{x_j}{r}\right)dS_r^{2,\delta}}_{(II)}.
			\end{eqnarray*}
			Using (\ref{indmetricexp}) we have
			\[
			(I)=\int_{S_r^2}e_{ij,i}\frac{x_j}{r}dS_r^{2,\delta}+\frac{1}{r}\int_{S_r^2}h^{ij}e_{ij}dS_r^{2,\delta}+O(r^{1-2\tau}).
			\]
			Also, integration by parts together with (\ref{forderq}) gives
			\begin{equation}\label{intparts}
				(II)=-\int_{S^2_r}\frac{\partial}{\partial x_k}\left(\frac{x_ix_k}{r^2}\right)e_{ij}\frac{x_j}{r}dS_r^{2,\delta}=-2\int_{S^2_r}e_{ij}\frac{x_ix_j}{r^3}dS_r^{2,\delta},
			\end{equation}
			so that 
			\begin{eqnarray*}
				\int_{S_r^2}e_{ij,k}\frac{x_ix_jx_k}{r^3}dS_r^{2,\delta} & = & -2\int_{S^2_r}e_{ij}\frac{x_ix_j}{r^3}
				dS_r^{2,\delta}+\int_{S^2_r}e_{ij,i}\frac{x_j}{r}dS_r^{2,\delta}\\
				& & \quad +\frac{1}{r}\int_{S^2_r}h^{ij}e_{ij}
				dS_r^{2,\delta}+O(r^{1-2\tau}). 
			\end{eqnarray*}
			Thus,
			\begin{eqnarray}\label{areaderform}
				\frac{d}{dr}A(r)
				&=&{{8\pi r+\frac{1}{2}\int_{S^2_r}(e_{ii,j}-e_{ij,i})\frac{x_j}{r}dS_r^{2,\delta}\notag}}\\
				& &{{ +\int_{S^2_r}e_{ij}\frac{x_ix_j}{r^3}dS_r^{2,\delta}+\frac{1}{2r}\int_{S^2_r}h^{ij}e_{ij}dS_r^{2,\delta}+O(r^{1-2\tau})}}\notag\\
				&=&8\pi r-8\pi m+\int_{S^2_r}e_{ij}\frac{x_ix_j}{r^3}dS_r^{2,\delta}+\frac{1}{2r}\int_{S^2_r}h^{ij}e_{ij}dS_r^{2,\delta}+o(1).
			\end{eqnarray}
			Combining this with (\ref{exparea}), we get
			\begin{equation}\label{areaderform2}
				\frac{d}{dr}A(r)=\frac{A(r)}{r}+4\pi r-8\pi m+\int_{S^2_r}e_{ij}\frac{x_ix_j}{r^3}dS_r^{2,\delta}+o(1).
			\end{equation}
			
			We now look at the volume $V(r)$ enclosed by $S_r^2$. By the co-area formula, 
			(\ref{asymgradr}) and (\ref{areaexp}),
			\begin{eqnarray}\label{interres}
				\frac{1}{r}\frac{d}{dr}V(r)
				& = & 
				\frac{1}{r}\int_{S_r^2}|\nabla r|^{-1} dS^2_r\nonumber\\
				&= & 
				\frac{A(r)}{r}+\frac{1}{2}\int_{S_r^2}e_{ij}\frac{x_jx_j}{r^3}dS_r^{2,\delta}+o(1).
			\end{eqnarray}
			We may now eliminate the integral term in (\ref{areaderform2}) and (\ref{interres}). The result is 
			\[
			\frac{d}{dr}(rA(r))=4\pi r^2-8\pi mr+2\frac{d}{dr}V(r)+o(r).
			\]
			Integrating we obtain a formula relating the volume and area, namely,
			\begin{equation}\label{volarea}
				V(r)=\frac{1}{2}rA(r)-\frac{2\pi}{3}r^3+2\pi m r^2+o(r^2), 
			\end{equation}
			which gives
			\[
			J_r^{M;3,2}=r+\frac{4\pi r^2}{A(r)}\left(m-\frac{r}{3}\right)-\frac{2r}{3}\left(\frac{A(r)}{4\pi r^2}\right)^{\frac{1}{2}}+o(1).
			\]
			On the other hand, from (\ref{exparea}),
			\[
			\frac{A(r)}{4\pi r^2}=1+\mathcal I+O(r^{-2\tau}),\quad \mathcal I:=\frac{1}{8\pi r^2}\int_{S_r^2}h^{ij}e_{ij}dS_r^{2,\delta}=O(r^{-\tau})
			\]
			so that
			\begin{eqnarray*}
				J_r^{M;3,2}& = & r+\left(1-\mathcal I+O(r^{-2\tau})\right)\left(m-\frac{r}{3}\right)-\frac{2r}{3}\left(1+\frac{1}{2}\mathcal I+O(r^{-2\tau})\right)+o(1)\\
				& = & m+o(1),
			\end{eqnarray*}
			which gives the proof of Theorem \ref{huisfst}. 
			
			So far we have been following \cite{fan2007large} closely. We now explain how a little variation yields the proof of Theorem \ref{isovk}. 
			We will make use of the well-known expansion
			\begin{equation}\label{exphr}
				H=\frac{2}{r}+O(r^{-\tau-1}).
			\end{equation}
			Together with (\ref{areaexp}) this gives
			\begin{equation}\label{expk}
				\frac{M(r)}{8\pi r}=1+\mathcal I+O(r^{-2\tau}).  
			\end{equation}
			Also, by the first variation formula for the area,
			\begin{eqnarray*}
				\frac{d}{dr}A(r) & = &\int_{S_r^2}\left\langle\frac{\partial}{\partial r},H\nu\right\rangle dS^2_r\\
				& \stackrel{(\ref{asymgradr})+(\ref{exphr})}= & \int_{S_r^2}|\nabla r|^{-1}H dS^2_r\\
				& = & M(r)+\int_{S^2_r}e_{ij}\frac{x_ix_j}{r^3}dS^{2,\delta}_r+O(r^{-2\tau+1}),
			\end{eqnarray*}
			and combining this with (\ref{areaderform2}) we get
			\begin{equation}\label{relareak}
				\frac{1}{2}r^2M(r)=\frac{1}{2}r{A(r)}+2\pi r^3-4\pi mr^2+o(r^2).
			\end{equation}
			We now use (\ref{volarea}) to eliminate the area term. Solving for the volume we get
			\begin{equation}\label{relvolk}
				V(r)=\frac{1}{2}r^2M(r)-\frac{8\pi}{3}r^3+6\pi mr^2+o(r^2),
			\end{equation}
			so that, using (\ref{expk}),
			\begin{eqnarray*}
				J_r^{M;3,1} & = & \frac{2}{3}r+\frac{8\pi r}{M(r)}\left(m-\frac{4}{9}r\right)-\frac{2}{9}r\left(\frac{M(r)}{8\pi r}\right)^2+o(1)\\
				& = & \frac{2}{3}r+\left(1-\mathcal  I+O(r^{-2\tau})\right)\left(m-\frac{4}{9}r\right)-\frac{2}{9}r\left(1+2\mathcal  I+O(r^{-2\tau})\right)+o(1)\\
				& = & m + o(1),
			\end{eqnarray*}
			which finishes the proof of the first equality in (\ref{hkiso2}). As for the second one, note that  by (\ref{relareak}) and (\ref{expk}),   
			\begin{eqnarray*}
				J_r^{M;2,1} & = & r+\frac{4\pi r}{M(r)}\left(2m-r\right)-\frac{1}{16\pi}M(r)+o(1)\\
				& = & r+\frac{1}{2}\left(1-\mathcal I+O(r^{-2\tau})\right)\left(2m-r\right)-\frac{r}{2}\left(1+\mathcal I+O(r^{-2\tau})\right)+o(1)\\
				& = & m+o(1),
			\end{eqnarray*}
			which completes the proof of Theorem \ref{isovk}.
			
			We now present the proof of Theorem \ref{remwithbd}.
			We first observe that instead of (\ref{exparea}) we now have 
			\begin{equation}\label{expareabd}
				\mathcal A (r)=2\pi r^2+\frac{1}{2}\int_{S_{r,+}^2}h^{ij}e^+_{ij}dS_{r,+}^{2,\delta^+}+O(r^{2-2\tau}). 
			\end{equation} 
			Also, the integration by parts leading to (\ref{intparts}) now produces an extra term, so that $(II)$ gets replaced by
			\begin{equation}\label{intparts2}
				(II^+)=-2\int_{S^2_{r,+}}e^+_{ij}\frac{x_ix_j}{r^3}dS_{r,+}^{2,\delta^+}-\int_{ S^1_{r}}e^+_{kj}\frac{x_j}{r}\vartheta^k dS^{1,\delta^+}_r+O(r^{-2\tau+1}).
			\end{equation} 
			Thus, instead of (\ref{areaderform2}) we now have
			\begin{equation}\label{areaderformbd}
				\frac{d}{dr}\mathcal A(r)=\frac{\mathcal A(r)}{r}+2\pi r-8\pi \mathfrak m+\int_{S^2_{r,+}}e^+_{ij}\frac{x_ix_j}{r^3}dS_{r,+}^{2,\delta^+}+o(1).
			\end{equation} 
			Hence, proceeding exactly as before we now get
			\begin{equation}\label{volareabd}
				\mathcal V(r)=\frac{1}{2}r\mathcal A(r)-\frac{\pi}{3}r^3+2\pi \mathfrak m r^2+o(r^2), 
			\end{equation}
			which gives 
			\begin{eqnarray*}
				\mathcal J^{M;3,2}_r & = & \frac{r}{2}+\frac{2\pi r^2}{\mathcal A(r)}\left(\mathfrak m -\frac{r}{6}\right)-\frac{r}{3}\left(\frac{\mathcal A(r)}{2\pi r^2}\right)^{\frac{1}{2}}+o(1)\\
				& = &  \frac{r}{2}+\left(1-\widehat I+O(r^{-2\tau})\right)\left(\mathfrak m -\frac{r}{6}\right)-\frac{r}{3}\left(\left(1+\frac{1}{2}\widehat I+O(r^{-2\tau})\right)\right)+o(1)\\
				& = & \mathfrak m +o(1),
			\end{eqnarray*}
			where 
			\[
			\widehat I=\frac{1}{4\pi r^2}\int_{S^2_{r,+}}h^{ij}e^+_{ij}dS^2_{r,+}=O(r^{-\tau}).
			\] This completes the proof of Theorem \ref{remwithbd}.


			\section{The variational setup}\label{var:setup}

			Here we address the variational issues needed in the bulk of the paper. Our aim is twofold. First,  we review the well-known  variational theory of free boundary constant mean curvature surfaces \cite{ros1995stability,ros1997stability} {in a way that is convenient for our purposes}. Next, we  discuss the much less known variational theory  of closed surfaces which are critical for the total mean curvature  functional under a volume preserving constraint and develop the corresponding stability theory. We remark that the variational theory associated to curvature integrals involving elementary symmetric functions of the principal curvatures (quermassintegrals) of hypersurfaces in {\em space forms} is a well established subject; see  \cite{barbosa1997stability} and the references therein.

			We start by considering a one-parameter family  of compact, embedded surfaces $t\in (-\varepsilon,\varepsilon)\mapsto S_t$ in an arbitrary Riemannian manifold $(M^3,g)$ evolving as 
			\begin{equation}\label{var:pres}
				\frac{\partial x_t }{\partial t}=Y_t,
			\end{equation} 
			where $x_t$ is the smooth map defining the embedding and $Y_t$ is a vector field along $S_t$, a (not necessarily normal) section of $TM$ restricted to $S_t$.  As usual, if $\nu_t$ is the unit normal vector field along $S_t$, let $W=\nabla\nu_t$ be the shape operator of $S_t$, so the corresponding principal curvatures (the eigenvalues of $W$) are $\kappa_1$ and $\kappa_2$. Thus, the mean curvature is $H=\kappa_1+\kappa_2$ and the Gauss-Kronecker curvature is $K=\kappa_1\kappa_2$. For later reference, {we recall that}
			\[
			\widetilde K=K-\frac{1}{2}{\rm Ric}_g(\nu,\nu)
			\]
			is the modified Gauss-Kronecker curvature.
			
			A well-known computation gives 
			\begin{equation}\label{first:var:area2:0}
				\frac{d}{dt}A(t)|_{t=0}=\int_{ S} {\rm div}_S YdS,
			\end{equation}
			where $A(t)$ is the area of $S_t$ and  we agree to drop the subscript $t$ upon evaluation at $t=0$. 
			Next we decompose $Y_t$ into its normal and tangential components: 
			\begin{equation}\label{var:pres:dec}
				Y_t=f_t\nu_t+Y_t^\top, \quad f_t=\langle Y_t,\nu_t\rangle.
			\end{equation}
			Thus, if we assume further that $S_t$ carries a boundary $\partial S_t$, 
			\begin{eqnarray}\label{first:var:area}
				\frac{d}{dt}A(t)|_{t=0} & = & \int_SfHdS+\int_S{\rm div}_SY^\top dS\nonumber\\
				& = &  \int_SfHdS +\int_{\partial S}\langle Y,\mu\rangle d\partial S,
			\end{eqnarray}
			where $\mu$ is the outward unit normal vector field along $\partial S$ and we used that $H={\rm div}_S\nu$. 
			
			Let us assume now that $M$  also  carries a  boundary, say $\Sigma$, with the variation being {\em admissible} in the sense that  $\partial S_t\subset\Sigma$. It follows   
			that $S=S_0$ is critical for the area under such variations satisfying the volume preserving condition
			\begin{equation}\label{pres:vol}
				\int_SfdS=0
			\end{equation}
			if and only if the mean curvature is constant and $S$ meets $\Sigma$ orthogonally along $\partial S$. We then say that $S$ is a free boundary constant mean curvature (CMC) surface.

			We now recall the corresponding notion of stability. Assuming that $S=S_0$ is a free boundary CMC as above,  a well-known computation \cite{ros1997stability} gives
			the second variational formula for the area: 
			\begin{equation}\label{secvar:areab}
				\frac{d^2 A}{dt^2}|_{t=0}=\int_{S}f\mathscr L_SfdS+\int_{\partial S}  f\left(\frac{\partial f}{\partial \mu}-\kappa f\right) d\partial S,
			\end{equation}
			where
			\begin{equation}\label{stab:op:0}
				\mathscr L_S=-\Delta_S -\left(|W|^2+{\rm Ric}_{g}(\nu,\nu)\right),
			\end{equation} 
			$\kappa=\langle \nu,\mathcal W \nu\rangle$ and $\mathcal W=\nabla \eta$ is the shape operator of the embedding $\Sigma\hookrightarrow M$. Here, $\eta$ is the outward unit normal vector to $M$ along $\Sigma$.

			Recall that $S=S_0$ is strictly stable (as a free boundary CMC surface) if the right-hand side of (\ref{secvar:areab}) is positive for any $f\neq 0$ satisfying (\ref{pres:vol}).  
			Accordingly, we define
			\[
			\mathcal F(S)=\left\{f\in H^1(S);\int_SfdS=0\right\}. 
			\] 
			
			\begin{proposition}\label{stab:cmcfree}
				A free boundary CMC surface $S$ as above is strictly stable if and only if the first eigenvalue $\lambda_{\mathscr L_S}$ of the eigenvalue problem  
				\[
				\left\{
				\begin{array}{ll}
					\mathscr L_Sf=\lambda f & {\rm in}\,\,\, S, \\
					\frac{\partial f}{\partial \mu}=\kappa f & {\rm on}\,\,\, \partial S,
				\end{array}
				\right.
				\]	
				is positive, where $f\in \mathcal F (S)$. Equivalently, for any $0\neq f\in \mathcal F(S)$,
				\[
				\int_S\left(|\nabla_Sf|^2-\left(|W|^2+{\rm Ric}(\nu,\nu)\right)f^2\right)dS-\int_{\partial S}\kappa f^2d\partial S> 0. 
				\]
			\end{proposition}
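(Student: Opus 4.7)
The plan is to show directly that the three statements are equivalent via a straightforward analysis of the quadratic form appearing in the second variation (\ref{secvar:areab}). First, I would integrate by parts to simplify that expression: since $\mathscr L_S = -\Delta_S - (|W|^2 + {\rm Ric}_g(\nu,\nu))$, Green's identity gives
\[
\int_S f \mathscr L_S f\, dS = \int_S |\nabla_S f|^2\, dS - \int_{\partial S} f\frac{\partial f}{\partial \mu}\, d\partial S - \int_S \bigl(|W|^2 + {\rm Ric}_g(\nu,\nu)\bigr) f^2\, dS.
\]
Substituting into (\ref{secvar:areab}), the boundary contribution $-\int_{\partial S} f\,\partial f/\partial\mu\, d\partial S$ exactly cancels the $\int_{\partial S} f\, \partial f/\partial \mu\, d\partial S$ term arising from the original formula, leaving
\[
\frac{d^2 A}{dt^2}\Big|_{t=0} = Q(f),
\]
with $Q$ exactly as in the last assertion of the proposition. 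This computation is purely formal but it is the crux: it converts the definition of strict stability into the statement that $Q(f) > 0$ for all nonzero $f \in \mathcal F(S)$.

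Second, I would record that the decomposition (\ref{var:pres:dec}) and the volume-preserving condition (\ref{pres:vol}) together mean that the normal component $f = \langle Y, \nu\rangle$ lies in $\mathcal F(S)$; conversely, any prescribed $f \in \mathcal F(S)$ can be realized as the normal part at $t=0$ of an admissible variation, i.e., one with $\partial S_t \subset \Sigma$ and satisfying (\ref{pres:vol}) infinitesimally. The latter requires a short argument: extend $f$ to a vector field transverse to $\Sigma$ near $\partial S$ (using the free boundary condition $\nu \perp \eta$ along $\partial S$), modify tangentially so the flow preserves $\Sigma$, and invoke the implicit function theorem to correct the volume to second order. This yields the equivalence between strict stability of $S$ and positivity of $Q$ on $\mathcal F(S) \setminus \{0\}$.

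Third, the equivalence with positivity of $\lambda_{\mathscr L_S}$ is the variational characterization of the first eigenvalue of the constrained problem. Indeed, the Rayleigh quotient associated to $\mathscr L_S$ with the Robin condition $\partial f/\partial \mu = \kappa f$ on $\partial S$ is precisely $Q(f)/\int_S f^2\, dS$, and the stated eigenvalue problem, interpreted with a Lagrange multiplier accounting for the constraint $\int_S f\, dS = 0$, has first eigenvalue
\[
\lambda_{\mathscr L_S} = \inf_{0 \neq f \in \mathcal F(S)} \frac{Q(f)}{\int_S f^2\, dS}.
\]
Thus $\lambda_{\mathscr L_S} > 0$ holds if and only if $Q(f) > 0$ for every nonzero $f \in \mathcal F(S)$, closing the loop of equivalences.

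The one genuinely nontrivial point is the construction of admissible variations with arbitrary prescribed normal part $f \in \mathcal F(S)$, since one must arrange the free boundary condition $\partial S_t \subset \Sigma$ simultaneously with the volume constraint. This is where the orthogonality $\nu \perp \eta$ along $\partial S$ is essential; everything else in the proof is formal manipulation of the second variation formula and the standard variational characterization of the first eigenvalue of a self-adjoint Robin problem restricted to a codimension-one subspace.
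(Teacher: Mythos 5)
Your proposal is correct and follows essentially the route the paper takes (implicitly, since the paper states Proposition \ref{stab:cmcfree} as a standard consequence of the second variation formula (\ref{secvar:areab})): Green's identity cancels the $\int_{\partial S} f\,\partial f/\partial\mu\,d\partial S$ terms so that the second variation equals the quadratic form $Q(f)$, and the eigenvalue statement is the usual Rayleigh-quotient characterization of the constrained Robin problem, using compactness of $H^1(S)\hookrightarrow L^2(S)$ so that positivity of the form on $\mathcal F(S)\setminus\{0\}$ is equivalent to $\lambda_{\mathscr L_S}>0$. Your second step, realizing an arbitrary $f\in\mathcal F(S)$ as the normal part of an admissible volume-preserving variation, is a sound supplement but is not needed here, since the paper defines strict stability directly as positivity of the right-hand side of (\ref{secvar:areab}) on mean-zero test functions.
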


			We now turn to the variational theory of the total mean curvature functional $\int_S H dS$. Here we assume that $S_t$ is closed ($\partial S=\emptyset$) and the variation is normal ($Y=f\nu$).  
			A simple computation shows that the shape operator evolves as 
			\begin{equation}\label{varshape}
				\frac{\partial W}{\partial t}=-\nabla_S^2 f-(W^2+{\rm Riem}_{g}^\nu)f,
			\end{equation}
			where $\nabla_S^2$, the Hessian of $f$, is viewed as a $(1,1)$-tensor, ${\rm Riem}^\nu_g(\cdot)={\rm Riem}_g(\cdot,\nu)\nu$ and ${{W^2=W\circ W}}$. 

			\begin{proposition}\label{k:vol:var}
				In a Riemannian $3$-manifold $(M,g)$ as above, a closed surface extremizes the total mean curvature under volume (respectively, area) preserving variations if and only if $\widetilde K={\rm const}$ (respectively, $\widetilde K=\gamma H$, where $\gamma$ is a  constant). 
			\end{proposition}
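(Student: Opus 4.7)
The plan is to compute the first variation of the total mean curvature functional $\int_S H\, dS$ along a normal variation $Y=f\nu$ on a closed surface and then read off the two curvature conditions via Lagrange multipliers associated to the volume and area constraints, respectively.

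First I would combine the evolution formula (\ref{varshape}) for the shape operator with the standard fact that, under a normal variation $Y=f\nu$, the area element evolves as $\partial_t\, dS_t = Hf\, dS$ (this follows from (\ref{first:var:area2:0}) applied pointwise, using ${\rm div}_S(f\nu)=fH$). Taking the trace of (\ref{varshape}) over $T_pS$ and using the two identities $\operatorname{tr}(W^2)=|W|^2$ and $\operatorname{tr}({\rm Riem}_g^\nu)={\rm Ric}_g(\nu,\nu)$ (the latter from picking an orthonormal frame completed by $\nu$), I obtain
\begin{equation*}
\frac{\partial H_t}{\partial t}\Big|_{t=0}=-\Delta_S f-\bigl(|W|^2+{\rm Ric}_g(\nu,\nu)\bigr)f.
\end{equation*}
Putting these together, and noting that $\int_S \Delta_S f\, dS=0$ since $\partial S=\emptyset$, the key computation is
\begin{equation*}
\frac{d}{dt}\int_{S_t}H_t\, dS_t\Big|_{t=0}=\int_S\bigl(H^2-|W|^2-{\rm Ric}_g(\nu,\nu)\bigr)f\, dS=2\int_S\widetilde K f\, dS,
\end{equation*}
where in the last step I used the two-dimensional identity $|W|^2=\kappa_1^2+\kappa_2^2=H^2-2K$ and the definition $\widetilde K=K-\tfrac12 {\rm Ric}_g(\nu,\nu)$.

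Finally I would combine this with $\frac{d}{dt}V(t)|_{t=0}=\int_S f\, dS$ and $\frac{d}{dt}A(t)|_{t=0}=\int_S Hf\, dS$ from (\ref{first:var:area}) and apply Lagrange multipliers. Criticality for $\int_S H\, dS$ under the volume constraint forces $\int_S \widetilde K f\, dS=0$ for every $f$ with $\int_S f\, dS=0$, hence $\widetilde K$ must be $L^2$-orthogonal to the subspace of mean-zero functions, which is equivalent to $\widetilde K\equiv{\rm const}$. Similarly, criticality under the area constraint forces $\int_S \widetilde K f\, dS=0$ for every $f$ with $\int_S Hf\, dS=0$, yielding $\widetilde K=\gamma H$ for some constant $\gamma$. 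The main (and only) technical obstacle is keeping all the signs and trace identities consistent in the derivation of the variational formula; once that identity is established, the rest is a one-line application of Lagrange multipliers.
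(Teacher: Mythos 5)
Your proposal is correct and follows essentially the same route as the paper: compute $\partial_t H$ (the paper quotes it as $\mathscr L_S f$, you derive it by tracing (\ref{varshape})), combine with $\partial_t\, dS_t = fH\, dS_t$ and $|W|^2=H^2-2K$ to get $\frac{d}{dt}\int_{S_t}H\,dS_t = 2\int_S \widetilde K f\, dS$, and then conclude via the constraints exactly as in the paper. No gaps.
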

			
			\begin{proof}
				From $\partial dS_t/\partial t=fHdS_t$, the fact that the mean curvature evolves as 
				\begin{equation}\label{evolmean}
					\frac{\partial H}{\partial t}=\mathscr L_Sf, 
				\end{equation}
				and the algebraic identity $|W|^2=H^2-2K$, we immediately see that 
				\[
				\frac{\partial}{\partial t}\int_{S_t}H dS_t|_{t=0}=2\int_S \widetilde K fdS_t, 
				\]
				which proves the first statement. As for the second one, just combine the computation above with (\ref{first:var:area}) and take into account that $\partial S=\emptyset$.
			\end{proof}

			In order to discuss the stability of this variational problem, we now compute the variation of $\widetilde K$. First, from $\partial\nu/\partial t=-\nabla_Sf$,  
			\begin{equation}
				\label{varricterm}
				\frac{\partial}{\partial t}{\rm Ric}_g(\nu,\nu)=f(\nabla_\nu{\rm Ric}_g)(\nu,\nu)-2{\rm Ric}_g({\nabla_Sf},\nu).
			\end{equation}
			As for the variation of $K$, we first recall the well-known formula 
			\[
			\frac{\partial }{\partial t}K={\rm tr}_S\left(\Pi \frac{\partial}{\partial t}W\right), 
			\]
			where $\Pi=HI-W$ is the Newton tensor \cite{rosenberg1993hypersurfaces}. Using (\ref{varshape}) we then get
			\begin{equation}\label{var:K}
				\frac{\partial}{\partial t}K=-{\rm tr}_S(\Pi \nabla_S^2f)-f{\rm tr}_S(\Pi\, W^2)-f{\rm tr}_S(\Pi\,{\rm Riem}_g^\nu).
			\end{equation}
			To proceed  we choose an orthonormal frame $e_A$, $A=1,2$, tangent to $S$ with $(\nabla_S)_{e_A}e_B=0$ at the given point. We compute
			\begin{eqnarray*}
				{\rm tr}_S(\Pi \nabla_S^2f) & = & 	\Pi^{AB}\langle (\nabla_S)_{e_A}\nabla_Sf,e_B\rangle\\
				& = & \Pi^{AB}e_A\langle \nabla_Sf,e_B\rangle-\Pi^{AB}\langle \nabla_Sf,(\nabla_S)_{e_A}e_B\rangle\\
				& = & (\Pi^{AB}\nabla_Sf^B)_{;A}-\Pi^{AB}_{\,\,\,\,\,;A}\nabla_Sf^B,
			\end{eqnarray*}
			where the semicolon denotes covariant derivation.
			By Codazzi equations, recalling that $h=g|_S$,
			\begin{eqnarray*}
				\Pi^{AB}_{\,\,\,\,\,;A}	& = & (Hh^{AB})_{;A}-W^{AB}_{\,\,\,\,\,;A}\\
				& = & (e_AH)h^{AB}-W^{AA}_{\,\,\,\,\,;B}-{{\langle R(e_A,e_B)\nu,e_A\rangle}}\\
				& = & e_BH-e_BH-{\rm Ric}_{g}(\nu,e_B)\\
				& = & {{-{\rm Ric}_{g}(\nu,e_B)}},
			\end{eqnarray*}
			so that
			\[
			{\rm tr}_S(\Pi \nabla_S^2f)={\rm div}_S(\Pi \nabla_Sf)+{\rm Ric}_{g}(\nu,\nabla_Sf). 
			\]
			Thus, from (\ref{var:K}) and the algebraic identity ${\rm tr}_S(\Pi W^2)=HK$, {which holds in dimension $3$}, 
			\[
			\frac{\partial}{\partial t}K=-\Lambda_S f-{\rm Ric}_{g}(\nu,\nabla_Sf)-fHK-f{\rm tr}_S(\Pi{\rm Riem}_g^\nu),
			\]
			where 
			\begin{equation}\label{box:div}
				\Lambda_S f={\rm div}_S(\Pi \nabla_Sf).
			\end{equation}
			Together with (\ref{varricterm}) this finally gives
			\begin{equation}\label{varsig2tilde}
				\frac{\partial}{\partial t}\widetilde K=L_S f,
			\end{equation}
			where 
			\begin{equation}\label{stab:op:gk}
				L_S =-\Lambda_S {{-HK-\frac{1}{2}(\nabla_\nu{\rm Ric}_g)(\nu,\nu)-{\rm tr}_S(\Pi{\rm Riem}_g^\nu)}}
			\end{equation}
			is the corresponding Jacobi operator. We note that 
			\begin{equation}\label{self:adj}
				-\int_Sf\Lambda_S\tilde fdS=\int_S\langle\Pi\nabla_S f,\nabla_S \tilde f\rangle dS, 
			\end{equation}
			for any functions $f$ and $\tilde f$. 
			In particular, 
			$L_S$ is always self-adjoint. Moreover, it is easy to check that this operator is  elliptic whenever $\Pi$ is positive definite.

			We now consider a surface $S\subset M$ satisfying $\widetilde K={\rm const.}$ and with the property that $\Pi$ is positive definite everywhere. We then say that $S$ is strictly stable if 
			\[
			\frac{d^2}{dt^2}\int_{S}HdS|_{t=0}> 0,
			\]
			for any normal variation as in (\ref{var:pres}) with $f\neq 0$. 
			As before let us set 
			\[
			\mathcal G(S)=\left\{f\in H^1(S);\int_S fdS=0\right\}.
			\]

			\begin{proposition}\label{stable:cond}
				$S$ is strictly stable if and only if 
				\[
				\int_S\left( \langle\Pi\nabla_S f,\nabla_S f\rangle{{-f^2\left(HK+\frac{1}{2}(\nabla_\nu{\rm Ric}_g)(\nu,\nu)+{\rm tr}_S(\Pi{\rm Riem}_g^\nu)\right)}} \right)dS> 0,
				\]
				for any $0\neq f\in\mathcal G(S)$. Equivalently, the first eigenvalue $\lambda_{L_S}$ of the eigenvalue problem
				\[
				L_Sf=\lambda f, \quad f\in \mathcal G (S),
				\]
				is positive. 	 
			\end{proposition}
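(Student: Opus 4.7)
The plan is to compute the second variation $I''(0)$ of $I(t) = \int_{S_t} H_t\,dS_t$ along a normal variation $Y_t = f_t \nu_t$ with initial velocity $f = f_0$, and identify it with (twice) the quadratic form in the statement; the eigenvalue reformulation will then follow from standard spectral theory.

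First I would differentiate the first-variation identity $I'(t) = 2\int_{S_t}\widetilde K_t f_t\,dS_t$ obtained in the proof of Proposition \ref{k:vol:var}. Using $\partial_t(dS_t) = f_t H_t\,dS_t$, the evolution law \eqref{varsig2tilde} in the form $\partial_t \widetilde K|_{t=0} = L_S f$, and the Euler--Lagrange condition $\widetilde K = c := \mathrm{const}$ on $S$, this yields
\begin{equation*}
I''(0) = 2\int_S f\,L_S f\,dS + 2c\,V''(0),
\end{equation*}
where $V(t)$ is the enclosed volume and $V''(0) = \int_S(\partial_t f_t|_{t=0} + f^2 H)\,dS$. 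The $2cV''(0)$ term records an unwanted dependence on the second-order behavior of the extension $f_t$. To eliminate it, I would pass to the Lagrangian $\mathcal{L}(t) := I(t) - 2cV(t)$: the Euler--Lagrange equation identifies $2c$ as precisely the Lagrange multiplier for the volume constraint, so $\mathcal{L}''(0) = 2\int_S f\,L_S f\,dS$ is intrinsic and agrees, for $f \in \mathcal{G}(S)$, with $I''(0)$ along any genuinely volume-preserving path whose initial normal velocity is $f$.

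Next I would apply the self-adjointness identity \eqref{self:adj} to the $\Lambda_S$-part of $L_S$, converting $2\int_S f\,L_S f\,dS$ into
\begin{equation*}
2\int_S\left(\langle \Pi\nabla_S f,\nabla_S f\rangle - f^2\left(HK + \tfrac{1}{2}(\nabla_\nu{\rm Ric}_g)(\nu,\nu) + {\rm tr}_S(\Pi\,{\rm Riem}_g^\nu)\right)\right)dS.
\end{equation*}
Strict stability, by the definition given just above the proposition, is the positivity of this expression for every $0 \neq f \in \mathcal{G}(S)$ (the global factor $2$ is harmless for the sign).

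For the eigenvalue equivalence, the hypothesis that $\Pi$ is positive definite makes $-\Lambda_S$ uniformly elliptic and hence $L_S$ a formally self-adjoint elliptic second-order operator. A standard Rayleigh-quotient argument on the closed subspace $\mathcal{G}(S) \subset H^1(S)$, combined with the compact embedding $H^1(S) \hookrightarrow L^2(S)$, produces a discrete constrained spectrum whose bottom $\lambda_{L_S}$ equals $\inf\{\int_S f\,L_S f\,dS / \int_S f^2\,dS : 0\neq f\in\mathcal{G}(S)\}$; positivity of the quadratic form is then tautologically equivalent to $\lambda_{L_S} > 0$. The main technical obstacle is the Lagrange-multiplier separation in the first paragraph: without isolating the $2cV''(0)$ contribution, $I''(0)$ is not a well-defined function of $f$ alone, and the very link between stability and the operator $L_S$ cannot be formulated cleanly.
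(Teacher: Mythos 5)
Your proposal is correct and follows essentially the same route the paper intends: the proposition is stated in Appendix \ref{var:setup} without a separate proof precisely because it is meant to follow from the first-variation formula in Proposition \ref{k:vol:var}, the evolution law (\ref{varsig2tilde}), and the self-adjointness identity (\ref{self:adj}), exactly the ingredients you assemble. Your explicit isolation of the Lagrange-multiplier term $2cV''(0)$ (so that the second variation along volume-preserving variations depends only on $f$) and the Rayleigh-quotient argument on $\mathcal G(S)$ are standard points the paper leaves implicit, and they are handled correctly.
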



			\section{The Gauss-Kronecker curvature in terms of the mean curvature}\label{conf:field}
			
			In this section we prove Proposition \ref{k:func:h}. Thus we aim to prove the identity (\ref{K:func:h:f}) which expresses the Gauss-Kronecker curvature in terms of the mean curvature up to terms decaying fast enough at infinity. Our starting point is the fact that the radial vector field
			\[
			X=(x_i-a_i)\frac{\partial}{\partial x_i}
			\]
			is conformal with respect to the Euclidean metric, i.e., $\mathcal L_X\delta=2\delta$, where $\mathcal L$ is the Lie derivative. 
			From this we see that $X$ is also conformal with respect to the  metric $f_{m,c}^{\gamma_1, \gamma_2}\delta$ where 
			$$f_{m,c}^{\gamma_1, \gamma_2}=1+\frac{2m}{r}+\gamma_1\frac{c\cdot x}{r^3}+\gamma_2\frac{1}{r^2}$$
			for some $\gamma_1, \gamma_2\in \mathbb R$ and $c\in\mathbb R^3$. 
			Indeed,
			there holds $\mathcal L_X(f_{m,c}^{\gamma_1, \gamma_2}\delta)=2\xi f_{m,c}^{\gamma_1, \gamma_2}\delta$, with	
			\begin{eqnarray}\label{conf:gm1}
				\xi(x) & = & 
				\frac{1}{f_{m,c}^{\gamma_1, \gamma_2}}\left(f_{m,c}^{\gamma_1, \gamma_2}+\frac{1}{2}\partial_kf_{m,c}^{\gamma_1, \gamma_2}(x_k-a_k)\right)\\
				& = &
				1-\frac{m}{r}+\frac{2m^2-\gamma_2}{r^2}+{{\frac{m}{r^3} x\cdot a }}-{{\frac{\gamma_1}{r^3} x\cdot c }}+O(r^{-3})\nonumber\\
				& = & 1-\frac{m}{\rho}+\frac{2m^2-\gamma_2}{\rho^2}+{{\frac{2m}{\rho^3} x\cdot a }}-{{\frac{\gamma_1}{\rho^3} x\cdot c }}+O(\rho^{-3}),\notag 
			\end{eqnarray}
			where in the last step we used that 
			\begin{equation}\label{ajust:r:rho}
				r^k=\rho^k+k\frac{(x-a)\cdot a}{\rho^{2-k}}+O(\rho^{-2+k}), \quad k\in\mathbb R.
			\end{equation}
			
			Let us consider an aS metric of the form $g=f_{m,c}^{\gamma_1, \gamma_2}\delta+p$, where $p=O(r^{-2-\epsilon})$, which satisfies \eqref{asym-req} with $\epsilon\geq 0$.
			\begin{proposition}\label{alm:conf}
				The vector field $X$ is almost conformal with respect to $g$ in the sense that 
				\begin{equation}\label{alm:conf2}
					\mathcal L_Xg=2\xi g+B,  \qquad \text{where}\qquad B=O({\rho}^{-2-\epsilon}).
				\end{equation}
			\end{proposition}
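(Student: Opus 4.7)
The plan is to decompose $g = f_{m,c}^{\gamma_1,\gamma_2}\delta + p$ and exploit the exact conformality of $X$ with respect to the leading piece $f_{m,c}^{\gamma_1,\gamma_2}\delta$, which has already been recorded in \eqref{conf:gm1} together with the explicit expansion of the conformal factor $\xi$. By linearity of the Lie derivative one writes
\[
\mathcal L_X g = \mathcal L_X(f_{m,c}^{\gamma_1,\gamma_2}\delta) + \mathcal L_X p = 2\xi f_{m,c}^{\gamma_1,\gamma_2}\delta + \mathcal L_X p = 2\xi g + \bigl(\mathcal L_X p - 2\xi p\bigr).
\]
Thus the proposition reduces to showing that $B := \mathcal L_X p - 2\xi p = O(\rho^{-2-\epsilon})$.

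To estimate the remainder, I would work directly in the Cartesian coordinates $x_i$, where the components $X^k = x_k - a_k$ satisfy $\partial_i X^k = \delta_i^k$, so that
\[
(\mathcal L_X p)_{ij} = X^k \partial_k p_{ij} + 2 p_{ij}.
\]
Since $a$ varies in a bounded region, $|X| = O(\rho)$, and the assumed decay $p = O_3(r^{-2-\epsilon})$ translates, via the identity \eqref{ajust:r:rho}, into $p = O(\rho^{-2-\epsilon})$ and $\partial p = O(\rho^{-3-\epsilon})$. Each of the two summands is then $O(\rho^{-2-\epsilon})$, so $\mathcal L_X p = O(\rho^{-2-\epsilon})$. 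Combined with $\xi = O(1)$ from \eqref{conf:gm1}, this also gives $2\xi p = O(\rho^{-2-\epsilon})$, and the required bound on $B$ follows.

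There is no substantive obstacle here; the proof is essentially bookkeeping of decay rates. The one conceptual point worth emphasising is the role played by peeling off the Schwarzschild-type background: a naive computation of $\mathcal L_X g$ without that splitting would produce, from the term $2m/r$ in the metric, a contribution of size $O(\rho^{-1})$, much too large to be swept into $B$. It is exactly this leading order contribution that gets absorbed into the conformal factor $2\xi$, reflecting the fact that $X$ is a genuine conformal field for the Schwarzschild metric. The same splitting strategy, with $\xi$ the same universal function, will then feed directly into the proof of Proposition \ref{k:func:h}.
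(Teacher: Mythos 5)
Your proposal is correct and follows essentially the same route as the paper: split $g=f_{m,c}^{\gamma_1,\gamma_2}\delta+p$, absorb the exactly conformal Schwarzschild-type part into $2\xi g$, and bound $B=\mathcal L_Xp-2\xi p$ via the coordinate formula for $\mathcal L_Xp$ together with $X=O(r)$, $\partial X=O(1)$ and the decay $p=O_3(r^{-2-\epsilon})$. The paper's own proof is just a terser version of the same bookkeeping.
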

			
			\begin{proof}
				A direct computation shows that (\ref{alm:conf2}) holds with $B=\mathcal L_Xp-2\xi p$. Note however that
				\[
				(\mathcal L_Xp)_{jk}=X^i\nabla_ip_{jk}+p_{ik}\nabla_jX^i+p_{ij}\nabla_kX^i,
				\]
				and the result follows given that $X=O(r)$. 
			\end{proof}
			
			We now take $\{e_1,e_2\}$ to be a local  orthonormal frame on $S^2_\rho(a)$ and $\nu$ its outward unit normal vector.  
			Recall that $W=\nabla\nu$ is the shape operator of $S^2_{\rho}(a)$ and $\Pi=HI-W$ denotes its Newton tensor.
			If $X^{\top}=X-\langle X,\nu\rangle\nu$ is the tangential component of $X$, then
			$$
			\mathcal L_{X^\top}g(\Pi e_A,e_A)=L_{X}g(\Pi e_A,e_A)-2\langle X,\nu\rangle W(\Pi e_A,e_A),
			$$ 
			and we obtain from (\ref{alm:conf2}) that 
			\[
			\langle\nabla_{\Pi e_A}X^\top,e_A\rangle+\langle\nabla_{e_A}X^\top ,\Pi e_A\rangle=2\xi\langle \Pi e_A,e_A\rangle-2\langle X,\nu\rangle\langle W\Pi e_A,e_A\rangle+B(\Pi e_A,e_A).
			\]
			Since 
			\[
			\langle\nabla_{e_A}X^\top ,\Pi e_A\rangle=\langle e_A,\nabla_{\Pi e_A}X^\top\rangle, 
			\]
			which is easily verified if we take the frame to be principal with respect to the shape operator $W$,
			this simplifies to 
			\[
			\langle\nabla_{e_A}X^\top ,\Pi e_A\rangle=\xi\langle \Pi e_A,e_A\rangle-\langle X,\nu\rangle\langle W\Pi e_A,e_A\rangle+\frac{1}{2}B(\Pi e_A,e_A).
			\]
			Thus, summing over $A$ and using that $H_{a,\rho}^2-|W|^2=2K_{a,\rho}$, we obtain
			\begin{equation}\label{esp:comp}
				\sum_A \langle\nabla_{e_A}X^\top ,\Pi e_A\rangle=\xi H_{a,\rho}-2\langle X,\nu\rangle K_{a,\rho}+\frac{1}{2}\sum_A B(\Pi e_A,e_A).
			\end{equation}
			In order to make use of this identity, which first appeared in \cite{alias2003integral}, we need to determine the asymptotics of $X^\top$.

			\begin{proposition}\label{xtang}
				One has $X^\top={O(\rho^{-1-\epsilon})}$. 
			\end{proposition}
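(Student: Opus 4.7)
The plan is to exploit the fact that the vector field $X = (x_i - a_i)\partial/\partial x_i$ is manifestly $\delta$-normal to $S^2_\rho(a)$, so any $g$-tangential component of $X$ must come entirely from the non-conformal perturbation $p := g - f_{m,c}^{\gamma_1,\gamma_2}\delta$. Since by hypothesis $p = O_3(\rho^{-2-\epsilon})$ while $|X|_\delta = \rho$, a direct pairing against a tangent frame should yield the claim, with the decay of $p$ partially offset by one power of $\rho$ from the length of $X$.

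In detail, first observe that $X/\rho$ is precisely the outward unit normal to $S^2_\rho(a)$ with respect to $\delta$. Hence for every vector $e$ tangent to $S^2_\rho(a)$ one has $\langle X, e\rangle_\delta = 0$, so
\[
\langle X, e\rangle_g = f_{m,c}^{\gamma_1,\gamma_2}\langle X, e\rangle_\delta + p(X, e) = p(X, e).
\]
Now I would choose a $g$-orthonormal frame $\{e_1, e_2\}$ tangent to $S^2_\rho(a)$, so that $X^\top = \sum_A \langle X^\top, e_A\rangle_g\, e_A$. Since $\nu$ is $g$-orthogonal to each $e_A$, we have $\langle X^\top, e_A\rangle_g = \langle X, e_A\rangle_g = p(X, e_A)$. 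Because $g = \delta + O(\rho^{-1})$, the $g$-orthonormal frame is $\delta$-orthonormal up to $O(\rho^{-1})$, so $|e_A|_\delta = 1 + O(\rho^{-1})$, and a bookkeeping estimate gives
\[
|\langle X^\top, e_A\rangle_g|
\leq \|p\|_\delta\,|X|_\delta\,|e_A|_\delta
= O(\rho^{-2-\epsilon})\cdot\rho\cdot(1+O(\rho^{-1}))
= O(\rho^{-1-\epsilon}).
\]
Summing the squares of the two components of $X^\top$ in the $g$-orthonormal frame yields $|X^\top|_g = O(\rho^{-1-\epsilon})$, as desired.

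I do not anticipate any serious obstacle here; the proof is essentially routine bookkeeping. The structural point being exploited is exactly what was already recorded in Proposition \ref{alm:conf}: the conformal factor $f_{m,c}^{\gamma_1,\gamma_2}$ does not disturb the radial direction, and all tangential drift of $X$ is absorbed into the almost-conformal remainder $B = \mathcal{L}_X p - 2\xi p = O(\rho^{-2-\epsilon})$. The only mildly delicate point is to be careful with the distinction between $\delta$-orthonormal and $g$-orthonormal frames, but since the two agree up to $O(\rho^{-1})$ this distinction is harmless for the stated order of decay.
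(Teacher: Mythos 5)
Your proof is correct. It differs in mechanics from the paper's: the paper expands the $g$-unit normal, $\nu=\big(f_{m,c}^{\gamma_1,\gamma_2}\big)^{-1/2}\mathfrak r_i\partial/\partial x_i+O(\rho^{-2-\epsilon})$, computes $\langle X,\nu\rangle=\big(f_{m,c}^{\gamma_1,\gamma_2}\big)^{1/2}\rho+O(\rho^{-1-\epsilon})$, and obtains $X^\top=X-\langle X,\nu\rangle\nu$ by cancellation of the leading term $\rho\,\mathfrak r_i\partial/\partial x_i$; you instead project onto a tangent frame and use that $X$ is $\delta$-normal to the coordinate sphere, so that $\langle X,e_A\rangle_g=p(X,e_A)$ and the smallness is manifest from the start, with no cancellation of leading-order quantities. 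Your route is slightly more economical and arguably more robust (the conformal factor never enters), whereas the paper's route has the side benefit of producing the expansions of $\nu$ and $\langle X,\nu\rangle$ that are reused immediately afterwards in the derivation of Proposition \ref{k:func:h}. Two small remarks: the paper's statement $X^\top=O(\rho^{-1-\epsilon})$ refers to the coordinate components (equivalently the norm, since $g$ and $\delta$ are uniformly equivalent near infinity), and your bound $|X^\top|_g=O(\rho^{-1-\epsilon})$ does deliver this; also, your closing appeal to Proposition \ref{alm:conf} is only heuristic --- the estimate $B=O(\rho^{-2-\epsilon})$ is a statement about $\mathcal L_Xp-2\xi p$, not about $X^\top$ --- but since you use it only as commentary and not as a logical step, this does not affect the argument.
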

			
			\begin{proof}
				Recalling that $\mathfrak r=(x-a)/\rho$, so that $X=\rho\mathfrak r_i\partial/\partial x_i$, one computes 
				\begin{equation}\label{normal:unit}
					\nu=\big(f_{m,c}^{\gamma_1, \gamma_2}\big)^{-1/2}\mathfrak r_i\frac{\partial}{\partial x_i}+O(\rho^{-2-\epsilon}),
				\end{equation}
				so that 
				\begin{equation}\label{xi:nu}
					\langle X,\nu\rangle=\big(f_{m,c}^{\gamma_1, \gamma_2}\big)^{1/2}\rho+O(\rho^{-1-\epsilon})
				\end{equation}
				Thus, 
				\[
				\langle X,\nu\rangle\nu  = \rho\,\mathfrak r_i\frac{\partial}{\partial x_i}+{O(\rho^{-1-\epsilon})},
				\]
				and the result follows. 
			\end{proof}

			We now observe that  by (\ref{mc:huang:f}) we may rewrite (\ref{exp:Pi}) as 
			\[
			\Pi=\frac{1}{2}H_{\rho,a}I+O(\rho^{-3}),
			\]
			so that 
			\[
			\frac{1}{2}\sum_AB(\Pi e_A,e_A)=\frac{1}{4}H_{\rho,a}{\rm tr}_{S^2_\rho(a)}B+O(\rho^{-5}), 
			\]
			where we used that ${{B=O(\rho^{-2})}}$.
			Also, the left-hand side of (\ref{esp:comp}) may be treated similarly. 
			Indeed, by Proposition \ref{xtang},
			\begin{eqnarray*}
				\sum_A \langle\nabla_{e_A}X^\top ,\Pi e_A\rangle 
				& = & \frac{1}{2}H_{\rho,a}{\rm div}_{S^2_\rho(a)}X^\top+O(\rho^{-5}).
			\end{eqnarray*}

			Putting all the pieces of our computation together and using \eqref{xi:nu} we get 
			\begin{eqnarray*}
				2K_{a,\rho} & = &  \left(\frac{\xi}{\langle X,\nu\rangle}+\frac{1}{\langle X,\nu\rangle}\left(\frac{1}{4}{\rm tr}_{S^2_\rho(a)}B-\frac{1}{2}{\rm div}_{S^2_\rho(a)}X^\top\right)\right)H_{a,\rho}+O(\rho^{-6})\\
				& = & \left(\frac{\xi}{\langle X,\nu\rangle}+{ O(\rho^{-3-\epsilon})}\right)H_{a,\rho}+O(\rho^{-6}).
			\end{eqnarray*}
			The proof of Proposition \ref{k:func:h} is completed if we note that by (\ref{conf:gm1}) and (\ref{xi:nu}),
			\begin{eqnarray*}
				\frac{\rho\,\xi}{\langle X,\nu\rangle}
				& = &
				\xi\left(\big(f_{m,c}^{\gamma_1, \gamma_2}\big)^{-1/2}+O(\rho^{-2-\epsilon})\right)\\
				& = &
				\left(1-\frac{m}{\rho}+\frac{2m^2-\gamma_2}{\rho^2}+\frac{2m}{\rho^3}x\cdot a-\frac{\gamma_1}{\rho^3}x\cdot c+ O(\rho^{-3})\right)\\
				&  &\cdot\left(1-\frac{m}{\rho}+\frac{3m^2-\gamma_2}{2\rho^2}+\frac{m}{\rho^3}x\cdot a-\frac{\gamma_1}{2\rho^3}x\cdot c+ O(\rho^{-2-\epsilon})\right)\\
				& = & 1-\frac{2m}{\rho}+\frac{9m^2-3\gamma_2}{2\rho^2}+\frac{3m}{\rho^3}x\cdot a-\frac{3\gamma_1}{2\rho^3}x\cdot c+ O(\rho^{-2-\epsilon}).
			\end{eqnarray*}
			
			\section{The proof of Proposition \ref{mc:huang:mean:b}}\label{dens:res}
			
			Here we indicate how the argument in \cite[Appendix F]{eichmair2013unique} may be used to prove Proposition \ref{mc:huang:mean:b}. In fact, this method allows us to approach the problem in the category of manifolds considered in Definition \ref{regge:af:bd}.
			
			\begin{proposition}\label{regge:ads+}
				If $(M,g)$ is an asymptotically flat  $3$-manifold with a non-compact boundary satisfying the RT condition then 
				\begin{equation}\label{regge:ads+:eq}
					\int_{S^2_{\rho,+}(b)}\left(x_\alpha-b_\alpha\right)\left(H_{\rho,+,b}-\frac{2}{\rho}\right)dS^{2,\delta^+}_{\rho,+}(b)=8\pi\mathfrak m \left( b_\alpha-C_\alpha^+\right)+O(\rho^{-\tau}), \quad \alpha=1,2.
				\end{equation}
			\end{proposition}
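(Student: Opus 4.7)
The plan is to adapt the elementary argument of Eichmair--Metzger \cite[Appendix F]{eichmair2013unique} to the half-space setting, bypassing any density argument. The key structural feature is that the functions $x_\alpha$ (for $\alpha = 1, 2$) are static potentials extending smoothly across $\Sigma = \{x_3 = 0\}$, and their restrictions to $\Sigma$ supply precisely the weights appearing in the boundary integrand in \eqref{cm:bd}.

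First I would linearize the mean curvature $H_{\rho,+,b}$ of $S^2_{\rho,+}(b)$ in the metric perturbation $e^+ = g - \delta^+$, so that $H_{\rho,+,b} - 2/\rho$ becomes an expression involving first derivatives of $e^+$ evaluated against the outward normal, with quadratic remainders of order $O(\rho^{-2\tau})$. Multiplying by $(x_\alpha - b_\alpha)$ and performing integration by parts on the hemisphere would then convert the integral into three ingredients: (i) a surface integral against $\mathbb{U}(x_\alpha - b_\alpha, e^+)(x/r)$ on $S^2_{\rho,+}(b)$, (ii) a boundary integral along $S^1_\rho(b) \subset \Sigma$ reproducing the circular-arc integrand in \eqref{cm:bd} and \eqref{mass:form:bd}, and (iii) a bulk remainder absorbed into the $O(\rho^{-\tau})$ error through the ${\rm RT}^+$ decay \eqref{regge:scal:bd}, since the even parts of the cross terms average out against the odd weight $x_\alpha - b_\alpha$.

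Next I would translate from the shifted hemisphere $S^2_{\rho,+}(b)$ back to $S^2_{\rho,+}$ centered at the origin. This replacement is a bounded perturbation since $b$ stays in a compact set, and contributes only lower-order terms controlled by asymptotic flatness. Splitting $\mathbb{U}(x_\alpha - b_\alpha, e^+) = \mathbb{U}(x_\alpha, e^+) - b_\alpha\, \mathbb{U}({\bf 1}, e^+)$, the first piece assembles together with its boundary companion from \eqref{cm:bd} into the contribution from $\mathcal{C}^+_\alpha$, while the second piece assembles together with its boundary companion from \eqref{mass:form:bd} into the contribution from $\mathfrak m$. The appropriate algebraic factor then yields exactly $8\pi\mathfrak m(b_\alpha - \mathcal{C}^+_\alpha)$.

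The main obstacle will be the bookkeeping of the boundary contributions. The integration by parts on the hemisphere inevitably produces a term along $S^1_\rho(b)$ that must reproduce exactly the circular-arc integrand in \eqref{cm:bd} and \eqref{mass:form:bd}, which forces a careful choice of antiderivative. In addition, the contribution to the bulk remainder coming from the mean curvature $H_\Sigma$ of the non-compact boundary near $S^1_\rho(b)$ must be controlled using \eqref{regge:scal:bd}; this is the half-space analogue of the subtle even/odd cancellation in \cite[Appendix F]{eichmair2013unique}.
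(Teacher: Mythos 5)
Your proposal takes essentially the same route as the paper's proof in Appendix \ref{dens:res}: expand $H_{\rho,+,b}-\frac{2}{\rho}$ in $e^+$ and use the tangential divergence theorem on the flat hemisphere (in the paper, the first variation of area applied to the vector field $Y_{(\alpha)}=(x_\alpha-b_\alpha)e^+_{ij}\mathfrak r_i\partial_j$ with $\mu=-\partial_3$, Proposition \ref{int:part:cmc}) to trade the cubic term for integrands linear in $\mathfrak r$ plus the circle integral along $S^1_\rho(b)$, which then assemble, after splitting off $b_\alpha$ and using the RT parity to discard the $b$-weighted pieces, into the flux integrands defining $\mathfrak m$ and $\mathcal C^+$ in \eqref{mass:form:bd} and \eqref{cm:bd}. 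The only inessential discrepancy is your mention of a ``bulk remainder'' tied to $H_\Sigma$: no such term arises in the computation, the error being the quadratic remainder in the mean curvature expansion, which is controlled by its RT parity exactly as you indicate for the cross terms.
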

			
			\begin{corollary}\label{cm:alt} There holds 
				\[
				\mathcal C_\alpha^+=-\lim_{\rho\to+\infty}\frac{1}{8\pi\mathfrak m}	\int_{S^2_{\rho,+}(\vec 0)}x_\alpha H_{\rho,+,\vec 0}dS^{2,\delta^+}_{\rho,+}(\vec 0).
				\]	
			\end{corollary}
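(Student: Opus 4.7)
The plan is to obtain Corollary \ref{cm:alt} as an immediate specialization of Proposition \ref{regge:ads+} to the case $b=\vec 0$, followed by a parity argument to discard the $2/\rho$ piece. More precisely, setting $b=\vec 0$ in the identity \eqref{regge:ads+:eq} yields
\[
\int_{S^2_{\rho,+}(\vec 0)} x_\alpha\!\left(H_{\rho,+,\vec 0}-\tfrac{2}{\rho}\right) dS^{2,\delta^+}_{\rho,+}(\vec 0) = -8\pi\mathfrak m\,\mathcal C_\alpha^{+} + O(\rho^{-\tau}),
\]
for $\alpha=1,2$, and since $\tau>1/2$ the error term vanishes as $\rho\to+\infty$.

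The remaining step is to show that the constant mean curvature contribution gives no net flux, i.e.\
\[
\int_{S^2_{\rho,+}(\vec 0)} x_\alpha\,dS^{2,\delta^+}_{\rho,+}(\vec 0)=0,\qquad \alpha=1,2.
\]
This is purely a symmetry observation: the hemisphere $S^2_{\rho,+}(\vec 0)\subset\mathbb R^3_+$ is invariant under the Euclidean reflection $x_\alpha\mapsto -x_\alpha$ (which preserves $x_3\geq 0$ and leaves the flat area element $dS^{2,\delta^+}_{\rho,+}(\vec 0)$ unchanged), while $x_\alpha$ itself is odd under this reflection. Consequently the integral vanishes identically for every $\rho$.

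Substituting this vanishing back into the displayed identity isolates $\int_{S^2_{\rho,+}(\vec 0)} x_\alpha H_{\rho,+,\vec 0}\,dS^{2,\delta^+}_{\rho,+}(\vec 0)=-8\pi\mathfrak m\,\mathcal C^+_\alpha+O(\rho^{-\tau})$, and dividing by $-8\pi\mathfrak m$ (which is allowed since $\mathfrak m\neq 0$ is part of the standing hypothesis under which $\mathcal C^+$ is defined) yields the claim upon letting $\rho\to+\infty$. There is really no obstacle here: the whole content lies in Proposition \ref{regge:ads+}, whose proof (deferred to Appendix \ref{dens:res} via the density argument from \cite[Appendix F]{eichmair2013unique}) is where the genuine analytic work is carried out; the corollary is a one-line consequence once the symmetry of coordinate hemispheres centered at the origin is invoked.
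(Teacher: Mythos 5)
Your proposal is correct and matches the paper's intended route: Corollary \ref{cm:alt} is exactly the specialization of Proposition \ref{regge:ads+} to $b=\vec 0$, with the $\tfrac{2}{\rho}$ term dropping out because $\int_{S^2_{\rho,+}(\vec 0)}x_\alpha\,dS^{2,\delta^+}_{\rho,+}(\vec 0)=0$ by the reflection symmetry $x_\alpha\mapsto -x_\alpha$ of the centered hemisphere. The sign bookkeeping and the use of $\mathfrak m\neq 0$ and $\tau>1/2$ are all handled as in the paper.
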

			
			The key ingredient in the proof is an integral identity derived from the fact that $S^2_{\rho,+}(b)$ is a free boundary CMC surface with mean curvature $2/\rho$ with respect to the metric $\delta^+$. 
			
			\begin{proposition}\label{int:part:cmc}
				There holds
				\begin{eqnarray*}
					\frac{1}{2}\int_{S^2_{\rho,+}(b)}\left(x_\alpha-b_\alpha\right)e^+_{ij,k}\mathfrak r_i\mathfrak r_j\mathfrak r_kdS^{2,\delta^+}_{\rho,+}(b) & = & \int_{S^2_{\rho,+}(b)}\left(x_\alpha-b_\alpha\right)\left(\frac{1}{2}e^+_{ij,k}\mathfrak r_j-2e^+_{ij}\frac{\mathfrak r_i\mathfrak r_j}{\rho}\right)dS^{2,\delta^+}_{\rho,+}(b)\\
					& & \quad + \frac{1}{2}\int_{S^2_{\rho,+}(b)}\left(e^+_{ii}\mathfrak r_\alpha+e^+_{i\alpha}\mathfrak r_i\right) dS^{2,\delta^+}_{\rho,+}(b)\\
					& & \qquad -\frac{1}{2}\int_{ S^{1}_{\rho}(b)}\left(x_\alpha-b_\alpha\right)e^+_{i3}\mathfrak r_i dS^{1,\delta^+}_{\rho}(b), 
				\end{eqnarray*}
				where $S^{1}_{\rho}(b)=\partial S^2_{\rho,+}(b)$ and recall that $\mathfrak r=(x-b)/\rho$.
			\end{proposition}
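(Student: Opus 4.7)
The identity is an integration by parts formula on the coordinate hemisphere $S^2_{\rho,+}(b) \subset (\mathbb R^3_+, \delta^+)$, so the plan is to derive it from a single application of the surface divergence theorem using the specific geometry of this hemisphere: its outward unit normal with respect to $\delta^+$ is $\nu = \mathfrak r$, its mean curvature is the constant $2/\rho$, and the free boundary condition against $\Sigma = \{x_3 = 0\}$ forces the outward unit conormal along $S^1_\rho(b)$ to be $\mu = -e_3$. This mirrors the scheme used in Appendix \ref{isop:mass:app} to derive the boundaryless analogue $\int_{S_r^2} e_{ij,k}x_ix_jx_k/r^3\, dS = \cdots$, with two new ingredients: the extra weight $(x_\alpha - b_\alpha)$, which will produce the area contributions $e^+_{ii}\mathfrak r_\alpha$ and $e^+_{i\alpha}\mathfrak r_i$ when the derivative hits it, and a genuine boundary integral over $S^1_\rho(b)$ coming from Stokes on an open surface.

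Concretely, I would introduce the auxiliary Euclidean vector field $F_i = (x_\alpha - b_\alpha)\, e^+_{ij}\mathfrak r_j$ and decompose $\mathrm{div}_\delta F = \mathrm{div}_S F + \mathfrak r_i \mathfrak r_j\,\partial_j F_i$, where the second piece expands via the product rule (using $\partial \mathfrak r_j/\partial x_k = \delta_{jk}/\rho$) so that its leading term is exactly $(x_\alpha - b_\alpha)\, e^+_{ij,k}\mathfrak r_i\mathfrak r_j\mathfrak r_k$. Applying the divergence theorem on the hemisphere gives
\[
\int_{S^2_{\rho,+}(b)} \mathrm{div}_S F\, dS^{2,\delta^+}_{\rho,+}(b) = \frac{2}{\rho}\int_{S^2_{\rho,+}(b)} F\cdot \mathfrak r\, dS^{2,\delta^+}_{\rho,+}(b) - \int_{S^1_\rho(b)} F_3\, dS^{1,\delta^+}_\rho(b),
\]
where the factor $2/\rho$ is the CMC value (responsible for the coefficient in front of the $e^+_{ij}\mathfrak r_i\mathfrak r_j/\rho$ term in the statement) and $F_3 = (x_\alpha - b_\alpha) e^+_{i3}\mathfrak r_i$ is the source of the boundary integral $\int_{S^1_\rho(b)}(x_\alpha - b_\alpha)e^+_{i3}\mathfrak r_i\, dS^{1,\delta^+}_\rho(b)$. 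The terms $(x_\alpha - b_\alpha)e^+_{ij,i}\mathfrak r_j$ and $e^+_{ii}\mathfrak r_\alpha + e^+_{i\alpha}\mathfrak r_i$ emerge from $\mathrm{div}_\delta F$ itself: the first is the derivative hitting $e^+$, the second collects the contributions from $\partial(x_\alpha - b_\alpha)/\partial x_i = \delta_{\alpha i}$ and $\partial \mathfrak r_j/\partial x_j = 3/\rho$ after using the symmetry of $e^+$.

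The main obstacle is purely algebraic bookkeeping rather than any genuine geometric subtlety, because the single divergence identity above produces several cross terms with coefficients that do not, on the first try, exactly reproduce the right-hand side of the proposition. I expect that the correct coefficients are obtained either by iterating the argument with a second auxiliary field (for instance $\widetilde F_k = (x_\alpha - b_\alpha)e^+_{ij}\mathfrak r_i\mathfrak r_j\mathfrak r_k$, which when combined appropriately with the first choice absorbs the spurious $e^+_{ii}/\rho$-type contributions), or by splitting $\delta_{ij} = \mathfrak r_i \mathfrak r_j + (\delta_{ij} - \mathfrak r_i \mathfrak r_j)$ inside a well-chosen intermediate expression exactly as in the boundaryless derivation of \eqref{areaderform}. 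This amounts to the same trick used in Appendix \ref{isop:mass:app}, now adapted to the presence of both the weight $(x_\alpha - b_\alpha)$ and the non-empty boundary of the hemisphere; the free boundary condition enters precisely to identify $\mu = -e_3$ so that the boundary term reduces to the claimed expression.
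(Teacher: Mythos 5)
Your proposal is correct and is essentially the paper's own proof: the paper obtains the identity by equating the right-hand sides of (\ref{first:var:area2:0}) and (\ref{first:var:area}) — exactly your surface divergence theorem on the hemisphere, with $H=2/\rho$, $\nu=\mathfrak r$ and $\mu=-\partial/\partial x_3$ from the orthogonal intersection — applied to precisely your vector field $Y_{(\alpha)}=(x_\alpha-b_\alpha)\,e^+_{ij}\mathfrak r_i\,\partial/\partial x_j$. Your closing hedge is unnecessary: a single application already closes the argument, since ${\rm div}_{S^2_{\rho,+}(b)}\,Y_{(\alpha)}=e^{+}_{i\alpha}\mathfrak r_i+\left(x_\alpha-b_\alpha\right)\bigl(e^+_{ii}/\rho-2e^+_{ij}\mathfrak r_i\mathfrak r_j/\rho+e^+_{ij,j}\mathfrak r_i-e^{+}_{ij,k}\mathfrak r_i\mathfrak r_j\mathfrak r_k\bigr)$ produces every term of the stated right-hand side in one pass, with no second auxiliary field or extra splitting of $\delta_{ij}$ needed.
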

			
			\begin{proof}
				Apply the identity that follows from equating the right-hand sides of (\ref{first:var:area2:0}) and (\ref{first:var:area}) with 
				$\mu=-\partial/\partial x_3$ to the vector field $Y_{(\alpha)}=(x_\alpha-b_\alpha) e^+_{ij}\mathfrak r_i\partial/\partial x_j$ by taking into account that 
				\[
				{\rm div}_{S^2_{\rho,+}(b)}\,Y_{\alpha}= e^{+}_{i\alpha}\mathfrak r_i+\left(x_\alpha-b_\alpha\right)
				\left(\frac{e^+_{ii}}{\rho}-2\frac{e^+_{ij}}{\rho}\mathfrak r_i\mathfrak r_j+e^+_{ij,j}\mathfrak r_i-e^{+}_{ij,k}\mathfrak r_i\mathfrak r_j\mathfrak r_k\right).
				\]
			\end{proof}
			
			We now recall the expansion 
			\[
			H_{\rho,+,b}-\frac{2}{\rho}=\frac{1}{2}e^+_{ij,k}\mathfrak r_i\mathfrak r_j\mathfrak r_k+2e^+_{ij}\frac{\mathfrak r_i\mathfrak r_j}{\rho}-e^{+}_{ij,i}\mathfrak r_j+\frac{1}{2}e^+_{ii,j}\mathfrak r_j-\frac{e^+_{ii}}{\rho}+E,
			\] 
			where the remainder satisfies {$E={O}(\rho^{-1-2\tau})$ and
				$E^{({-1}')}=O(\rho^{-2-2\tau})$; see \cite[Lemma 2.1]{huang2009center}.} 
			This reduces to (\ref{mc:huang:f:b}) if we take $e^+=2mr^{-1}\delta^++{O}(r^{-2})$, which provides the link between Propositions \ref{regge:ads+} and \ref{mc:huang:mean:b}. 
			It follows that 
			\begin{eqnarray*}
				\int_{S^2_{\rho,+}(b)}\left(x_\alpha-b_\alpha\right)\left(H_{\rho,+,b}-\frac{2}{\rho}\right)dS^{2,\delta^+}_{\rho,+}(b) & = &
				-\frac{1}{2} \int_{S^2_{\rho,+}(b)}\left(x_\alpha-b_\alpha\right)\left(e^+_{ij,i}-e^+_{ii,j}\right)\mathfrak r_j dS^{2,\delta^+}_{\rho,+}(b)	\\
				& & \quad +\frac{1}{2}	\int_{S^2_{\rho,+}(b)}\left(e^+_{i\alpha}\mathfrak r_i-e^+_{ii}\mathfrak r_\alpha\right)dS^{2,\delta^+}_{\rho,+}(b)\\
				& & \qquad -\frac{1}{2}\int_{ S^1_{\rho}(b)}\left(x_\alpha-b_\alpha\right)e^+_{i3}\mathfrak r_i d S^{1,\delta^+}_{\rho}(b)+ O({\color{blue}{\rho}}^{-\tau}),  
			\end{eqnarray*}
			where Proposition \ref{int:part:cmc} has been used to make sure that only those terms which are linear in $\mathfrak r$ survive in the right-hand side. 
			We now observe that under the decay assumptions (including Regge-Teitelboim) the integrals 
			\[
			\int_{S^2_{\rho,+}(b)}x_\alpha \left(e^+_{ij,i}-e^+_{ii,j}\right)\frac{b_{j}}{\rho}dS^{2,\delta^+}_{\rho,+}(b), \quad 
			\int_{S^2_{\rho,+}(b)} \left(e^+_{ij,i}-e^+_{ii,j}\right)\frac{b_{j}}{\rho}dS^{2,\delta^+}_{\rho,+}(b),
			\]
			and 
			\[
			\int_{S^2_{\rho,+}(b)}\left(e^+_{i\alpha}\frac{b_i}{\rho}-e^+_{ii}\frac{b_\alpha}{\rho}\right)dS^{2,\delta^+}_{\rho,+}(b)
			\]
			are $O(\rho^{-\tau})$, the same happening to the boundary integrals 
			\[
			\frac{b_{\alpha}}{\rho}\int_{S^1_{\rho}(b)}x_\alpha e^+_{i3}d S^{1,\delta^+}_{\rho}(b), \quad
			\frac{b_{\alpha}b_i}{\rho}\int_{S^1_{\rho}(b)} e^+_{i3}dS^{1,\delta^+}_{\rho}(b).
			\]
			Thus, we end up with 
			\begin{eqnarray*}
				\int_{S^2_{\rho,+}(b)}\left(x_\alpha-b_\alpha\right)\left(H_{\rho,+,b}-\frac{2}{\rho}\right)dS^{2,\delta^+}_{\rho,+}(b) & = & -\frac{1}{2}
				\int_{S^2_{\rho,+}(b)}x_\alpha \left(e^+_{ij,i}-e^+_{ii,j}\right)\frac{x_{j}}{\rho}dS^{2,\delta^+}_{\rho,+}(b)\\
				& & +\frac{1}{2}\int_{S^2_{\rho,+}(b)}\left(e^+_{i\alpha}\frac{x_i}{\rho}-e^+_{ii}\frac{x_\alpha}{\rho}\right)dS^{2,\delta^+}_{\rho,+}(b)\\
				& & -\frac{1}{2}\int_{S^1_{\rho}(b)}x_\alpha e^+_{i3}\frac{x_{i}}{\rho}dS^{1,\delta^+}_{\rho}(b)\\
				& & 	+\frac{1}{2} b_\alpha
				\int_{S^2_{\rho,+}(b)} \left(e^+_{ij,i}-e^+_{ii,j}\right)\frac{x_{j}}{\rho}dS^{2,\delta^+}_{\rho,+}(b)\\
				& &	+\frac{1}{2}b_\alpha\int_{S^1_{\rho}(b)} e^+_{i3}\frac{x_{i}}{\rho}d S^{1,\delta^+}_{\rho}(b)+O(\rho^{-\tau}). 
			\end{eqnarray*}
			Comparing the right-hand side of the above with the definitions of $\mathfrak m$ and $\mathcal C^+$, the proof of Proposition \ref{regge:ads+}, and hence of Proposition \ref{mc:huang:mean:b}, follows.

			\begin{remark}\label{various:inc}
				The upshot of Corollary \ref{cm:alt} is another expression for the center of mass $\mathcal C^+$, besides (\ref{cm:bd}), derived from Hamiltonian methods, and the isoperimetric one appearing in Theorems \ref{free:af:bd} and \ref{iso:ext:em1}. Another rendition of this invariant comes from \cite[Theorem 2.4]{de2019mass}, this time in terms of certain asymptotic flux integrals involving the Einstein tensor of the metric in the interior and the Newton tensor along the boundary; see also \cite{chai2018two}. 
				It is remarkable indeed that this kind of invariant admits so many distinct manifestations.    
			\end{remark}
			
			\section{The uniqueness of the free boundary CMC hemispheres}\label{uniq:stab}
			
			The very last piece of the argument  leading to Theorem \ref{iso:ext:em1} uses the appropriate uniqueness of the free boundary CMC hemispheres  in Theorem \ref{free:af:bd}. Here we justify this step by following the reasoning in \cite[Section 4]{huisken1996definition}. We know from the analysis in Section \ref{stab:fol} that for each $\rho$ large enough the corresponding hemisphere is
			a strictly stable free boundary CMC surface graphically described by a function $\phi_\rho$  on $S^2_{\rho,+}(\mathcal C^+)$ satisfying the bound 
			\[
			\|\rho^{-1/2}\phi_\rho\|_{C^{2,\alpha}}^{(\rho)}\leq C,
			\] 
			where $C>0$ is an absolute constant and the weighted H\"older norm is defined as in the left-hand side of (\ref{schauder}). The uniqueness claim is that, for $\rho$ large enough depending only on $C$, any other free boundary CMC hemisphere with the {\em same} mean curvature and which is graphed by a function satisfying this H\"older bound should coincide with (the graph of) $\phi_0:=\phi_\rho$.  Indeed, assume  there exists another such hemisphere, say associated to a function $\phi_1$.  As in \cite[Proposition 2.1]{huisken1996definition}, the asymptotic roundness of the graphs means that we may interpolate between the corresponding embeddings by setting
			\[
			F_{t}(x)=F_{0}(x)+tu(x)\nu(x), \quad t\in[0,1],
			\]     
			for some function $u(x)=\langle\vec{\mathfrak a},\nu(x)\rangle +q(x)$, where $\vec{\mathfrak a}\in\mathbb R^2$ is a vector and  $q=O(\rho^{-1})$. 
			A crucial remark at this point is that all of these surfaces are free boundary (with a possibly non-constant mean curvature $H_{F_t}$ for $0<t<1$) and may be graphed by using functions satisfying the {same} H\"older bound as $\phi_0$.
			Since $H_{F_0}=H_{F_1}$,  the variational vector field $Y=F_1-F_0$ satisfies
			\[
			|Y|\leq\|dH_{F_0}\|^{-1}\sup_t\|d^2H_{F_t}(Y,Y)\| \leq C_1|Y|^2,
			\]
			where we used (\ref{est:invL}) applied to $dH_{F_0}=\mathscr L_{F_0}$, the Jacobi operator associated to $F_0$, and the fact that $\|d^2H_{F_t}\|=O(\rho^{-3})$ uniformly in $t$.  Thus, there exists an absolute constant $C_2>0$ such that $|Y|\leq C_2$ implies $Y=0$. We next check that $|Y|$ (equivalently, $|\vec{\mathfrak a}|$) may be chosen small enough so as to fulfill this vanishing criterion if $\rho$ is large.  We first note that, again because $H_{F_0}=H_{F_1}$, 
			\begin{equation}\label{first:sec}
				\|dH_{F_0}Y\|\leq \sup_{t}\|(dH_{F_t}-dH_{F_0})Y\|.
			\end{equation}
			As in \cite[Proposition 16]{ambrozio2015rigidity} we compute that  
			\[
			dH_{F_t}Y=\mathscr L_{F_t}u+Y^{\top}H_{F_t},
			\]
			where $Y^\top$ is the tangential component of $Y$. 
			Starting with (\ref{normal:unit}) we obtain $|Y^{\top}|=O(\rho^{-3})$ and hence $|Y^{\top}H_{F_t}|=O(\rho^{-4})$. Combining this with  (\ref{est:asym:gr:fb}) we see that the right-hand side of (\ref{first:sec}) is $O(\rho^{-4})$. On the other hand, since $\langle\vec{\mathfrak a},\nu\rangle$ is an approximate eigenfunction of $\mathscr L_{F_0}$ under Neumann boundary condition with eigenvalue close to $6m\rho^{-3}$, the left-hand side of (\ref{first:sec}) is $\geq C_3|\vec{\mathfrak a}|\rho^{-3}$.  Thus, $|\vec{\mathfrak a}|\leq C_4\rho^{-1}$ and the uniqueness claim follows provided we take $\rho\geq C_2^{-1}C_4$.  
			
		\bibliographystyle{plain}
		\bibliography{bibfile-centerofmass}

	\end{document}